\setlist[enumerate]{%
	leftmargin=*,%
	label=(\roman*)} 
\setlist[itemize]{leftmargin=*}
\newcommand*{\bracketize}[1]{[#1]}
\newtheorem{theorem}{Theorem}[section]
\newtheorem{lemma}[theorem]{Lemma}
\newtheorem{proposition}[theorem]{Proposition}
\newtheorem{corollary}[theorem]{Corollary}
\newtheorem{question}[theorem]{Question}
\theoremstyle{definition}%
\newtheorem{definition}[theorem]{Definition}
\newtheorem{example}[theorem]{Example}
\newtheorem{remark}[theorem]{Remark}
\DeclareMathOperator{\Spec}{Spec}
\DeclareMathOperator{\OO}{\mathscr{O}}
\DeclareMathOperator{\Supp}{Supp}%
\newcommand{\fmm}{\mathfrak{m}}
\newcommand{\sF}{\mathscr{F}}%
\newcommand{\PP}{\mathbb{P}}
\renewcommand{\setminus}{\smallsetminus}
\newcommand{\QQ}{\mathbb{Q}}
\newcommand{\cX}{\mathcal{X}}
\newcommand{\cY}{\mathcal{Y}}
\newcommand{\cZ}{\mathcal{Z}}
\newcommand{\cA}{\mathcal{A}}
\newcommand{\cC}{\mathcal{C}}
\newcommand{\cU}{\mathcal{U}}
\newcommand{\cJ}{\mathscr{J}}
\newcommand{\cL}{\mathcal{L}}
\newcommand{\cM}{\mathcal{M}}
\DeclareMathOperator{\Aut}{Aut}
\DeclareMathOperator{\Div}{div}
\newif\ifhascomments \hascommentstrue
	\newcommand{\matt}[1]{{\color{red}[[\ensuremath{\spadesuit\spadesuit\spadesuit} #1]]}}
	\newcommand{\jesse}[1]{{\color{blue}[[\ensuremath{\clubsuit\clubsuit\clubsuit} #1]]}}
	\newcommand{\changho}[1]{{\color{teal}[[\ensuremath{\diamondsuit\diamondsuit\diamondsuit} #1]]}}
	\newcommand{\jesse}[1]{}
	\newcommand{\matt}[1]{}
	\newcommand{\changho}[1]{}
\begin{document}

	\begin{abstract}
		Determining the limiting behaviour of the Jacobian as the underlying curve degenerates has been the subject of much interest. 
		For nodal singularities, there are beautiful constructions of Caporaso as well as Pandharipande of compactified universal Jacobians over the moduli space of stable curves. 
		Alexeev later obtained a canonical such compactification by extending the Torelli map out of the Deligne--Mumford compactification of $\mathcal{M}_{g,n}$. 
		In contrast, Alexeev and Brunyate proved that the Torelli map does not extend over the cuspidal locus in Schubert's alternative compactification of pseudostable curves. 

		In this paper, we consider curves with singularities that locally look like the axes in $m$-space, which we call axis-like singularities. 
		We construct an alternative compactification of $\mathcal{M}_{g,n}$ consisting of curves with such singularities and prove that the Torelli map extends out of this compactification. 
		Furthermore, for every alternative compactification in the sense of Smyth, we identify an axis-like locus over which the Torelli map extends.
	\end{abstract}

	\title{Extending the Torelli map to alternative compactifications of the moduli space of curves}
	
	\author{Changho Han}
	\address{Changho Han, Department of Mathematics, Korea University}
	\email{changho\_han@korea.ac.kr}
	
	\author{Jesse Leo Kass}
	\address{Jesse Leo Kass, Department of Mathematics, UC Santa Cruz}
	\email{jelkass@ucsc.edu}
	
	\author{Matthew~Satriano}
	\thanks{MS was partially supported by a Discovery Grant from the
		National Science and Engineering Research Council of Canada and a Mathematics Faculty Research Chair.}
	\address{Matthew Satriano, Department of Pure Mathematics, University
		of Waterloo}
	\email{msatriano@uwaterloo.ca}
	
	\maketitle

	\section{Introduction} \label{sec:intro}

	The Jacobian of a smooth genus $g$ curve $C$ is a classical object in algebraic geometry which intertwines geometric, arithmetic, and Hodge-theoretic information of $C$. 
	Defined as the moduli space of degree zero line bundles on $C$, it carries the structure of a $g$-dimension Abelian variety. 
	The Torelli Theorem, which says that $C$ can be reconstructed from its Jacobian $\mathrm{Jac}(C)$, led to advances in the construction of moduli spaces via period domains and period maps, such as moduli spaces of 
	abelian varieties (see \cite{AMRT10}), K3 surfaces (see \cite{Kon20}), and hyperK\"ahler manifolds (see \cite{Ver13, Ver20}). Furthermore, Jacobians played a key role in the proof of important arithmetic results such as Faltings's Theorem (over number fields) in \cite{Fal83,Fal84} and the Riemann Hypothesis over finite fields (proved by Weil in \cite{Wei48}).
	
	Motivated by Faltings's usage of integral models of Jacobians in \cite{Fal83,Fal84}, it became of interest to understand both how $\mathrm{Jac}(C)$ varies in moduli as well as how it degenerates. 
	The first of these considerations led to the study of {\it universal Jacobians} $\cJ^d_g$, which parameterize pairs $(L,C)$ with $L$ a degree $d$ line bundle on a curve $C$; 
	there is a natural map $\cJ^d_g\to\cM_g$ realizing $\cJ^d_g$ as a family of Abelian varieties over the moduli space of genus $g$ curves. 
	

	The question of how $\mathrm{Jac}(C)$ degenerates as $C$ does has been a subject of great interest \cite{Igu56, Ish78, D'S79, OS79, AK80I, AK79II, Cap94, Sim94I, Pan96, Est01, Cap08, Mel09, Mel11, KP19, Mel19}. 
	One way to study universal Jacobians and their compactifications comes from Torelli maps. In \cite{Ale04}, Alexeev gave a modular description of the Torelli map as a morphism 
	\begin{align*}
		t_g \colon& \cM_g \to \cA_g\\
		&C \mapsto (\mathrm{Pic}^{g-1}(C),\Theta(C))
	\end{align*}
	where $\cA_g$ is moduli stack of principally polarized Abelian varieties of dimension $g$ and $\Theta(C)$ is the {\it Theta divisor} corresponding to the principal polarization of an Abelian variety $\mathrm{Pic}^{g-1}(C)$. 
	The canonical universal Jacobian $\cJ^{g-1}_g$ can thus by viewed as the pullback of the universal family of Abelian varieties over $\cA_g$. 
	Understanding how $\textrm{Pic}^{g-1}(C)$ behaves as $C$ degenerates to a stable (nodal) curve is then a question of how to construct a compactification of $\cJ^{g-1}_g$ over the Deligne--Mumford compactification $\overline{\cM}_g$; 
	constructing such a compactification then amounts to extending the Torelli map to a modular compactification of $\cA_g$. 
	In \cite{Ale02}, Alexeev constructed a compactification $\overline{\cA}_g^{\mathrm{Ale}}$ of $\cA_g$ parameterizing so-called principally polarized stable semi-abelic pairs. 
	Furthermore, he showed in \cite[\S 5]{Ale04}\footnote{On the level of coarse spaces, this map was known to exist by Namikawa \cite[Corollary 18.9]{Nam76II}.} that the Torelli map extends to 
	\[
		\overline{t}_g \colon \overline{\cM}_g \to \overline{\cA}_g^{\mathrm{Ale}}.
	\]
	As a consequence, $\overline{t}_g$ induces a {\it canonically compactified universal Jacobian} $\overline{\cJ}^{g-1}_g\to\overline{\cM}_g$ whose fiber over any stable curve $C$ is the {\it canonically compactified Jacobian} $\overline{\mathrm{Pic}}^{g-1}(C)$ of $C$.
	\\
	\\
	\indent	From this perspective, the question of how to consistently define a limit of $\mathrm{Pic}^{g-1}(C)$ as the underlying smooth curve $C$ degenerates to a \emph{non-nodal} curve then becomes a question of how to extend the Torelli map to \emph{alternative compactifications} of $\cM_g$ which incorporate non-nodal singularities.

	Already when one moves from nodes to cusps, constructing such limits presents problems. 
	Specifically, there is a natural compactification $\overline{\cM}_g^{\mathrm{ps}}$ of $\cM_g$ introduced by Schubert \cite{Sch91} which disallows elliptic tails and replaces with them cusps; 
	such curves are called \emph{pseudostable curves}. 
	In \cite{AB12}, Alexeev and Brunyate proved that the Torelli map does \emph{not} extend to $\overline{\cM}_g^{\mathrm{ps}}$ precisely because it fails to extend over the cuspidal locus. 
	The moduli stack of pseudostable curves $\overline{\cM}_g^{\mathrm{ps}}$ was historically the first alternative compactification of $\cM_g$, and fits more generally within the Hassett--Keel program \cite{Has05, HH09, HL10, Fed12, CMJL12, HH13, HL14, CMJL14, AFS16, AFSvdW17I, AFS17II, AFS17III}; 
	this program uses the minimal model program to construct alternative compactifications $\overline{\cM}_g(\alpha)$ depending on a parameter $\alpha$. 
	In light of Alexeev and Brunyate's result, in order to understand the limiting behaviour of $\mathrm{Pic}^{g-1}(C)$, we must look beyond cuspidal curves and, correspondingly, beyond the Hassett--Keel program.

	
	In this paper, we work over an algebraically closed field of characteristic zero. 
	We introduce the following class of singularities which are locally given by the axes in $m$-space. 
	Note that nodes precisely correspond to the case where $m=2$.
	\begin{definition} \label{def:axis_sing}
		Let $C$ be a curve defined over an algebraically closed field $k$ and let $m \ge 2$ be an integer. 
		Then, a singular point $p$ of $C$ is an {\it $m$-axis point} if
		\[
			\widehat{\OO}_{C,p} \cong k\llbracket x_1,\dotsc,x_m \rrbracket / (x_ix_j \, : \, 1 \le i < j \le m).
		\]
		We say $C$ is \emph{axis-like} if $\mathrm{Aut}(C)$ is finite and each of the singular points $p$ of $C$ is an  $m_p$-axis singularity. 
	\end{definition}
	Unlike cusps, $m$-axis singularities and nodal singularities share the common feature of being seminormal. Thus, they are, in many ways a more natural class to examine than cusps. 

	A subtle issue that arises in our work is that whether or not the Torelli map extends to an axis-like curve $C$ depends on the \emph{global geometry} of $C$. 
	Indeed, let us illustrate this through two examples of curves with a unique $4$-axis singularity, one where the Torelli map extends and one where it does not. 
	For the latter, any irreducible curve $C$ with finite automorphism group and a unique singular point which is a $4$-axis point admits two distinct stable models $C'_1$ and $C'_2$ in $\overline{\cM}_g$ whose canonically compactified Jacobians are non-isomorphic; 
	in fact, this is the same issue that arises for cusps in the aforementioned work of Alexeev and Brunyate \cite{AB12}, precluding the Torelli map from extending, see Example~\ref{ex:why_quasi-separating_axis-like} for more further details. 
	On the other hand, if $C$ is a curve constructed by gluing four non-isomorphic elliptic curves $E_1,\dotsc,E_4$ to a $4$-axis point of $C$, then the canonically compactified Jacobian of any stable model of $C$ is isomorphic to the product $\mathrm{Jac}(E_1) \times \dotsb \times \mathrm{Jac}(E_4)$. 
	Hence, the global geometry of curves with $m$-axis points influences whether the Torelli map extends. It is therefore necessary to impose additional constraints, which 
	leads to our definition of 
	\emph{separating (resp.~quasi-separating) axis-like singularities}; 
	these are $m$-axis singularities where $m \ge 3$ and 
	no branches (resp.~at most $3$ branches) of the singularity meet away from the singular point, see Definitions~\ref{def:separating_axis-like} and \ref{def:quasi-separating_axis-like}.

	%

	In this paper, we:\vspace{0.3em}

	\begin{enumerate}
		\item[(1)] Construct a normal alternative compactification $\overline{\cM}_{g,n}(\sF)$ of $\cM_{g,n}$ consisting of curves with either nodes or separating axis-like singularities (Proposition~\ref{prop:moduli_separating_axis-like}).\vspace{0.6em}

		\item[(2)] Prove the Torelli map extends to $\overline{\cM}_{g,n}(\sF)$ (Theorem~\ref{thm:main_thm_pointed_extension_axis_like}).\vspace{0.6em}

		\item[(3)] Prove that in \emph{every} alternative compactification of $\cM_{g,n}$ in the sense of Smyth \cite{Smy13}, the Torelli map extends to the locus of quasi-separating axis-like curves, which is normal (Theorem~\ref{thm:main_thm_partial_extension_axis_like}).\vspace{0.3em}
		%
	\end{enumerate}

	As far as we know, our work gives the first extension of the Torelli map beyond the nodal case. 
	Our construction of $\overline{\cM}_{g,n}(\sF)$ relies on the general theory of Smyth \cite{Smy13} where he classifies all alternative compactifications of $\cM_{g,n}$ that are proper Deligne--Mumford (DM) moduli stacks of curves. 
	In \cite[Theorem 1.9]{Smy13} he shows all such compactifications are given by $\cZ$-stable curves $\overline{\cM}_{g,n}(\cZ)$, where $\cZ$ is a combinatorial gadget recording how to replace a stable curve $C$ with another curve $C'$, possibly with worse singularities; see Section~\ref{subsec:Smyth_stability} for more details. 
	
	Our first main theorem is then:

	\begin{theorem} \label{thm:main_thm_pointed_extension_axis_like}
		For $g \ge 2$ or $g,n \ge 1$, the pointed Torelli map $t_{g,n} \colon \cM_{g,n} \to \cA_g$ extends uniquely to a morphism
		\[
			\overline{t}_{g,n}^{\sF} \colon \overline{\cM}_{g,n}(\sF) \to \overline{\cA}_g^{\mathrm{Ale}}
		\]
		of (proper DM) stacks, where $\overline{\cM}_{g,n}(\sF)$ denotes the moduli stack of stable $n$-pointed separating axis-like curves of genus $g$.
	\end{theorem}

	\begin{remark}
		As an immediate consequence, pulling back the universal family from 
		$\overline{\cA}_g^{\mathrm{Ale}}$ yields an alternative compactification $\overline{\cJ}^{g-1}_{g,n}(\sF)$ of the canonical pointed universal Jacobian $\cJ^{g-1}_{g,n}$ over $\overline{\cM}_g(\sF)$. 
		In particular, taking $n=0$, we obtain an alternative compactification of the canonical universal Jacobian $\cJ^{g-1}_g$.
	\end{remark}

	\begin{remark}
		One reason to consider the pointed case (i.e., $n>0$) is that our result also yields alternative compactifications of the pointed universal Jacobian $\cJ^d_{g,n}$ for all $d$. 
		Indeed, for any integer $d$ and any tuple $(d_1,\dotsc,d_n)$ of integers such that $d-g+1 = d_1 + \dotsb + d_n$, there is an isomorphism $\cJ^{g-1}_{g,n} \xrightarrow{\simeq} \cJ^d_{g,n}$ between universal Jacobians, where one twists the degree $g-1$ line bundle by $d_i$ copies of the $i$th marked point. 
		In particular, this is isomorphism is canonical when $n=1$. 
		Via this identification, we therefore also obtain an alternative compactification of the pointed universal Jacobian $\cJ^d_{g,n}$.
	\end{remark}

	Our next main theorem concerns all alternative compactifications $\overline{\cM}_{g,n}(\cZ)$ introduced by Smyth \cite{Smy13}. Recall that Alexeev--Brunyate \cite{AB12} showed that for $g \ge 2$, the Torelli map $t_g$ does not extend to a morphism from the moduli stack $\overline{\cM}_g^{\mathrm{ps}}$ of pseudostable curves to $\overline{\cA}_g^{\mathrm{Ale}}$; but the Torelli map does extend to a morphism $\cU_g^{\mathrm{ps}} \to \overline{\cA}_g^{\mathrm{Ale}}$ where $\cU_g$ is the open substack of $\overline{\cM}_g^{\mathrm{ps}}$ of nodal curves. This motivates the following question:

	\begin{question}\label{q:alternative-cmpt-extension}
		Given an alternative compactification $\overline{\cM}_{g,n}(\cZ)$ of $\cM_{g,n}$ in Smyth's sense \cite{Smy13}, what is the 
		largest open substack of $\overline{\cM}_{g,n}(\cZ)$ over which the pointed Torelli map $t_{g,n}^\cZ \colon \overline{\cM}_{g,n}(\cZ) \dashrightarrow \overline{\cA}_g^{\mathrm{Ale}}$ is a morphism?
	\end{question}

	\begin{remark}
		Notice that 
		this open substack can be much larger than the nodal locus, e.g., our Theorem~\ref{thm:main_thm_pointed_extension_axis_like} shows that the pointed Torelli map extends entirely to a morphism over the alternative compactification $\overline{\cM}_{g,n}(\sF)$.
	\end{remark}

	Our second main result is then:


	\begin{theorem} \label{thm:main_thm_partial_extension_axis_like}	
		Let $\overline{\cM}_{g,n}(\cZ)$ be an alternative compactification of $\cM_{g,n}$ in the sense of Smyth \cite{Smy13} with $g \ge 2$ or $g,n\ge1$. 
		Then the pointed Torelli map $t_{g,n} \colon \cM_{g,n} \to \cA_g$ extends uniquely to a morphism
		\[
			t_{g,n}^\cZ \colon \cM_{g,n}(\cZ)^{\textrm{qs-axis}} \to \overline{\cA}_g^{\mathrm{Ale}}
		\]
		where $\cM_{g,n}(\cZ)^{\textrm{qs-axis}}$ is the open substack of $\overline{\cM}_{g,n}(\cZ)$ parameterizing quasi-separating axis-like curves.
	\end{theorem}


	Similar to Theorem \ref{thm:main_thm_pointed_extension_axis_like}, our Theorem~\ref{thm:main_thm_partial_extension_axis_like} shows that $\cJ^d_{g,n}$ admits a partial compactification $\cJ^d_{g,n}(\cZ)^{\textrm{qs-axis}} \to \cM_{g,n}(\cZ)^{\textrm{qs-axis}}$ as a flat family of principally polarized stable semi-abelic pairs; 
	in other words, there is a consistent choice of a flat limit of $\mathrm{Pic}^{g-1}(C)$ as an underlying smooth $n$-pointed curve $(C;\sigma_1,\dotsc,\sigma_n)$ degenerates into an $n$-pointed $\cZ$-stable quasi-separating axis-like curve.

	\subsection*{Main techniques:~a stacky extension theorem} \label{subsec:intro_techniques}

	Theorem~\ref{thm:main_thm_partial_extension_axis_like} is a strict generalization of Theorem \ref{thm:main_thm_pointed_extension_axis_like} since $\cM_{g,n}(\sF)^{\textrm{qs-axis}}=\overline{\cM}_{g,n}(\sF)$. 
	We prove Theorem~\ref{thm:main_thm_partial_extension_axis_like} by first constructing the map 
	\[
		|t_{g,n}^\cZ|\colon |\cM_{g,n}(\cZ)^{\textrm{qs-axis}}| \to |\overline{A}_g^{\mathrm{Ale}}|
	\]
	on the level of isomorphism classes. 
	This is shown in Theorem~\ref{thm:extended_Torelli_Z-stable} and relies on Caporaso and Viviani's combinatorial characterization of the fibers of 
	$\overline{t}_{g,n} \colon \overline{\cM}_{g,n} \to \overline{\cA}_g^{\mathrm{Ale}}$ given in \cite{CV11}; 
	see Section~\ref{sec:extending_Torelli_map}, particularly Theorem~\ref{thm:Torelli_C1-equivalence}.

	To show that the set-theoretic map $|t_{g,n}^\cZ|$ lifts to a morphism of stacks, we prove and apply a new stack-theoretic extension theorem (Theorem \ref{thm:extension-stacks} below). 
	This result is inspired by the corresponding scheme-theoretic results \citelist{\cite{GG14}*{Theorem 7.3} \cite{AET23}*{Lemma 3.18}}, which say that if $X$ is normal, then a rational map $X \dasharrow Y$ of proper schemes extends to a morphism if and only if it extends as a map of sets and extends on the level of DVR-valued points. 
	We imagine our new stack-theoretic generalization will be useful beyond the context of our paper.

	We wish to emphasize that our stacky extension theorem (Theorem \ref{thm:extension-stacks}) is \emph{not} a simple generalization of the corresponding scheme-theoretic statement. 
	In fact, if one simply substitutes the words ``Deligne--Mumford stack'' in place of ``scheme/variety'' in the statements of \citelist{\cite{GG14}*{Theorem 7.3} \cite{AET23}*{Lemma 3.18}}, then the result is \emph{false}.

	\begin{example}
		Let $X$ be an affine cone over an elliptic curve $E$. If $0\in X$ denotes the cone point, then consider the map $\psi\colon X\setminus0\to B\mu_2$ induced by a $2$-isogeny $E'\to E$ of elliptic curves. 
		This defines a rational map $X\dasharrow B\mu_2$ which extends on the level of sets since $|B\mu_2|$ is a point. 
		We show in Example~\ref{ex:lasc_extension} that the above rational map extends to a morphism on the level of DVR points, however any extension of the double cover to $X$ is necessarily ramified. 
		Thus, $\psi$ does not extend to a morphism from $X$ and hence the naive generalization of \citelist{\cite{GG14}*{Theorem 7.3} \cite{AET23}*{Lemma 3.18}} to Deligne--Mumford stacks false.
	\end{example}

	Furthermore, to prove our stack-theoretic extension theorem, a completely new proof technique is needed. 
	Indeed, the proofs of the scheme-theoretic versions of the result proceed by considering $\overline{\Gamma}_f$, the closure of the graph of the morphism $f\colon X\dasharrow Y$, and applying Zariski's Main Theorem to prove $\overline{\Gamma}_f \to X$ is an isomorphism. 
	This proof simply does not work for stacks since $\overline{\Gamma}_f \to X$ may be non-representable.

	We note also that when one requires the source stack to be \emph{smooth}, Deopurkar and the first author in \cite[Lemma 7.4]{DH21} gave a stacky extension theorem, however their result is too restrictive for our purposes:~$\cM_{g,n}(\cZ)^{\textrm{qs-axis}}$ is normal but not smooth in general; 
	even worse, one expects $\overline{\cM}_{g,n}(\cZ)$ to be arbitrarily singular in light of Vakil's Murphy's Law \cite{Vak06}. 

	We prove our stacky extension theorem for rational maps $\cX\dasharrow\cY$ where $\cX$ is {\it locally algebraically simply connected}, a class of singularities introduced by Koll\'ar in \cite[Definition 7.1]{Kol93}. 
	A scheme $X$ is said to be locally algebraically simply connected if for every closed point $x\in X$, there is a resolution of singularities $Y_x \to \Spec \OO_{X,x}^h$ which has trivial \'etale fundamental group. 
	See Definition~\ref{def:lasc} and Definition~\ref{def:lasc_DMstack} for more details, including the stack-theoretic generalization of the definition.

	\begin{theorem}[Stacky Extension Theorem]\label{thm:extension-stacks}
		Let $\cX$ be a separated Deligne--Mumford stack that is locally of finite type and locally algebraically simply connected. 
		Let $\cY$ be a proper Deligne--Mumford stack, $\cU\subset\cX$ an open dense substack, and $f\colon\cU\to\cY$ a morphism such that
		\begin{enumerate}
			\item\label{extension:sets} $|f|\colon|\cU|\to|\cY|$ extends to a map $|g|\colon|\cX|\to|\cY|$ on the underlying sets of the associated topological spaces, and
			\item\label{extension:DVRs} (DVR extension property) for every DVR $R$ with fraction field $K$ and residue field $k$ and every commutative diagram
			\[
				\xymatrix{
					\Spec K\ar[r]\ar[d] & \cU\ar[dr]^-{f}\ar@{^{(}->}[d]^-{i} & \\
					\Spec R\ar[r]^-{h}\ar@/_1.1pc/@{-->}[rr]_-{\overline{h}} & \cX & \cY
				}
			\]
			of solid arrows, there exists $\overline{h}$ making the diagram commute such that $\overline{h}(\Spec k)=|g|(h(\Spec k))$ in $|\cY|$.
		\end{enumerate}
		Then there exists an extension $g\colon\cX\to\cY$ of $f$ which is unique up to $2$-isomorphism. 
	\end{theorem}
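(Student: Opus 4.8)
The plan is to reduce the stacky statement to a statement about schemes via a carefully chosen atlas, and then to use descent together with the étale-local simple connectedness hypothesis to glue. First I would choose a smooth surjective atlas $p\colon X\to\cX$ from a scheme $X$; restricting over $\cU$ gives $U=X\times_\cX\cU$, and composing with $f$ yields a morphism $f\circ(p|_U)\colon U\to\cY$. Since $\cU$ is dense and open, $U$ is dense and open in $X$; and since $\cX$ is locally algebraically simply connected, so is $X$ (this is the content of Definition~\ref{def:lasc_DMstack}, which is tailored so that it passes to smooth covers). Now I would want to apply the \emph{scheme-level} extension statements \citelist{\cite{GG14}*{Theorem 7.3}\cite{AET23}*{Lemma 3.18}}, or rather a mild variant thereof, to produce $\overline{f}\colon X\to\cY$. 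The subtlety noted in the paper — that the graph-closure argument fails because $\overline\Gamma_f\to X$ need not be representable — is precisely what forces a different route: we cannot directly invoke ZMT. Instead I would proceed as follows.

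The key step is to replace $X$ by its strict henselizations (or étale-local charts) at closed points. Fix a closed point $x\in X$, let $R$ range over DVRs mapping to $\Spec\OO_{X,x}^h$ hitting $x$, and use hypothesis~\eqref{extension:DVRs} to get compatible $\overline h\colon\Spec R\to\cY$. Now choose a resolution of singularities $Y_x\to\Spec\OO_{X,x}^h$ with trivial étale fundamental group (this exists by local algebraic simple connectedness). On the smooth scheme $Y_x$, the open set $U_x$ (preimage of $U\cap\Spec\OO_{X,x}^h$) is dense, and the composite $U_x\to\cU\to\cY$ together with the set-level extension and DVR extension satisfies the hypotheses of the scheme-theoretic extension theorem applied to a \emph{smooth} source mapping to a proper DM stack — here one uses that a morphism from a smooth variety to a proper DM stack which is defined in codimension one extends (by purity of the branch locus / the fact that $\pi_1^{\textrm{\'et}}(Y_x)=1$ kills any monodromy obstruction to lifting through the coarse space). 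Concretely: map $Y_x\to\overline M$, the coarse space of $\cY$, using the scheme-theoretic result; then lift to $\cY$ using triviality of $\pi_1^{\textrm{\'et}}(Y_x)$ and the DVR-compatibility to pin down the lift. Pushing this morphism $Y_x\to\cY$ down along $Y_x\to\Spec\OO_{X,x}^h$ — valid because $\cY$ is proper and separated, so the morphism descends across the proper birational contraction (the fibers are connected, Stein-type argument) — produces $g_x\colon\Spec\OO_{X,x}^h\to\cY$.

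Next I would check these local morphisms are compatible. On overlaps of two henselizations the two extensions agree on a dense open (namely $\cU$) and on all DVR points, hence agree as morphisms to the separated stack $\cY$ (two morphisms from a normal — or at least reduced — scheme to a separated DM stack agreeing on a dense open and on all DVR-valued points are $2$-isomorphic, and the $2$-isomorphism is itself unique, so one gets genuine descent data, not just up to $2$-isomorphism). Spreading out, the $g_x$ glue to $\overline f\colon X\to\cY$ extending $f\circ(p|_U)$. Finally I would descend $\overline f$ along the atlas $p\colon X\to\cX$: set $R=X\times_\cX X$ with its two projections $\pi_1,\pi_2\colon R\to X$; the morphisms $\overline f\circ\pi_1$ and $\overline f\circ\pi_2$ agree over $\cU\times_\cX\cU$ (dense in $R$) and on DVR points of $R$ (inherited from hypothesis~\eqref{extension:DVRs} pulled back), and $R$ is again locally algebraically simply connected, so by the same uniqueness they are canonically $2$-isomorphic; one verifies the cocycle condition on $X\times_\cX X\times_\cX X$ (again by density plus separatedness of $\cY$), and stack descent for the fibered category $\cY$ produces $g\colon\cX\to\cY$. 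Uniqueness up to $2$-isomorphism of $g$ follows from the same density-plus-separatedness principle applied to any two extensions.

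The main obstacle I expect is the lifting step on the resolution $Y_x$: getting from a morphism to the coarse space $\overline M$ back up to the stack $\cY$. The elliptic-cone counterexample in the excerpt shows this can genuinely fail without the simple-connectedness hypothesis, so the proof must use $\pi_1^{\textrm{\'et}}(Y_x)=1$ in an essential way — presumably via the fact that $\cY\to\overline M$ is étale-locally a quotient $[V/G]$ with $G$ finite, so a lift corresponds to a $G$-torsor on $Y_x$ (plus compatible map), and $\pi_1^{\textrm{\'et}}(Y_x)=1$ forces the torsor to be trivial; the DVR-extension hypothesis is then what rigidifies the choice of trivialization so that the lift is the \emph{correct} one and in particular is compatible with $f$ on $\cU$. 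Making this precise — in particular handling the non-Galois / non-tame aspects and the reduction to the local quotient presentation — is the technical heart of the argument.
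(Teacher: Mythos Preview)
Your proposal has the right ingredients --- atlas reduction and descent, passage to the coarse space, the local quotient presentation $\cY=[V/G]$, a resolution of singularities, the DVR hypothesis, and purity --- but you have misplaced where the hypothesis $\pi_1^{\text{\'et}}(Y_x)=1$ actually does its work, and this creates a genuine gap.

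You use simple connectedness of $Y_x$ for the \emph{lifting} step on the resolution (``lift $Y_x\to\overline M$ to $Y_x\to\cY$ because the relevant $G$-torsor is trivial''), and then treat the \emph{push-down} along $Y_x\to\Spec\OO_{X,x}^h$ as a routine Stein-factorization fact. This is backwards. First, the lifting problem on $Y_x$ is not a question of classifying $G$-torsors on $Y_x$: you already have a specific $G$-torsor $P\to U_x$ from $f$, and what you need is that its finite extension to $Y_x$ (the relative normalization) is \'etale. That follows from the DVR hypothesis together with purity of the branch locus on the \emph{regular} scheme $Y_x$; simple connectedness is not what drives it. Second, the push-down is exactly where the counterexample bites: for the affine cone $X_E$ over an elliptic curve, one has a perfectly good map $X'_E\to B\mu_2$ on the resolution, and it does \emph{not} descend to $X_E$ despite $X'_E\to X_E$ being proper birational with connected fibers. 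So ``Stein-type argument'' is not enough; you must use lasc here.

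The paper organizes this as follows. After reducing (as you do) to $X$ an affine scheme and $\cY=[V/G]$ with $V$ affine, one takes the relative normalization $Q$ of $X$ in the torsor $P\to U$; this is the global candidate for the extended torsor. One then picks a single global resolution $\varphi\colon X'\to X$, shows the relative normalization $Q'\to X'$ is \'etale (DVR hypothesis at codimension-one points of $X'$, then purity on the regular $X'$), and finally applies the lasc hypothesis --- in the form of Proposition~\ref{prop:lasc_rel_normaliz_et}, which says that if $X$ is lasc then the Stein factorization of a finite \'etale cover of a resolution yields a finite \'etale cover of $X$ --- to conclude $Q\to X$ is a $G$-torsor. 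The $G$-equivariant map $Q\to V$ then comes for free because $V$ is affine and $H^0(\OO_U)=H^0(\OO_X)$. In short: DVR + purity produces \'etaleness \emph{upstairs on the resolution}; lasc is what pushes \'etaleness \emph{down to} $X$. Your proposal will go through once you reassign the hypotheses accordingly, and you can also dispense with the pointwise henselization-and-glue scaffolding in favor of a single global resolution of the affine chart.
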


	We prove that $\cM_{g,n}(\cZ)^{\textrm{qs-axis}}$ is normal by using the normality of versal deformation spaces of $m$-axis singularities from \cite{PR24} (see Remark~\ref{rmk:deformation_space_reduced?}), then prove that $\cM_{g,n}(\cZ)^{\textrm{qs-axis}}$ is locally algebraically simply connected by analyzing the contracting locus from an open substack of $\overline{\cM}_{g,n}$ to $\cM_{g,n}(\cZ)^{\textrm{qs-axis}}$. 
	We verify the Torelli map satisfies the hypotheses of Theorem~\ref{thm:extension-stacks} in Theorem~\ref{thm:extended_Torelli_Z-stable}.

	

	\subsection*{Outline} \label{subsec:outline}
	Section~\ref{sec:prelim} gathers several preliminary results including properties of locally algebraically simply connected singularities introduced by Koll\'ar in \cite[\S 7]{Kol93}. 
	It includes Theorem~\ref{thm:asc_fib_lasc}, which is a tool to check when a scheme is locally algebraically simply connected. 
	In Section~\ref{sec:extension_DMstacks}, we prove the Stacky Extension Theorem (Theorem~\ref{thm:extension-stacks}) and provide an example (Example~\ref{ex:lasc_extension}) of why the naive generalization of the scheme-theoretic statement does not work (using Example~\ref{ex:lc_not_lasc_general}).
	
	Section~\ref{sec:Smyth_moduli} and Section~\ref{sec:qsep_axis-like} deal with alternative compactifications of $\cM_{g,n}$. 
	We summarize Smyth's combinatorial characterization of alternative compactifications of $\cM_{g,n}$ in Section~\ref{subsec:Smyth_stability}. 
	Then in Section~\ref{subsec:stable_axis-like}, we define stable separating axis-like curves (Definition~\ref{def:separating_axis-like}) and construct the moduli stack $\overline{\cM}_{g,n}(\sF)$ of $n$-pointed stable separating axis-like curves of genus $g$. 
	Various properties of $\overline{\cM}_{g,n}(\sF)$ are proved in the remainder of Section~\ref{subsec:stable_axis-like}, particularly Corollary~\ref{cor:moduli_FL_lasc} which verifies that $\overline{\cM}_{g,n}(\sF)$ is locally algebraically simply connected. 
	We define the open substack $\cM_{g,n}(\cZ)^{\textrm{qs-axis}}$ of any fixed alternative compactification $\overline{\cM}_{g,n}(\cZ)$ of $\cM_{g,n}$ in Section~\ref{sec:qsep_axis-like} then prove Corollary~\ref{cor:moduli_Z-stable_qs_axis-like_lasc} which verifies that $\cM_{g,n}(\cZ)^{\textrm{qs-axis}}$ is locally algebraically simply connected.
	
	Section~\ref{sec:extending_Torelli_map} starts with a summary of Caporaso and Viviani's combinatorial analysis of Alexeev's compactified Torelli map $\overline{t}_{g,n} \colon \overline{\cM}_{g,n} \to \overline{\cA}_g^{\mathrm{Ale}}$;
	it culminates with Theorem~\ref{thm:Torelli_C1-equivalence}. 
	We prove the that Torelli map extends to the quasi-separating axis-like locus in Theorem~\ref{thm:extended_Torelli_Z-stable} and Theorem~\ref{thm:compactified_Torelli_stable_separating_axis-like}, which immediately implies Theorem~\ref{thm:main_thm_pointed_extension_axis_like} and Theorem~\ref{thm:main_thm_partial_extension_axis_like} (the two main theorems from Section~\ref{sec:intro}).
	
	\subsection*{Conventions} \label{subsec:conventions}
	All schemes and stacks are defined over an algebraically closed field of characteristic zero (called the base field). 
	Similarly, fields are assumed to be field extensions of the base field. 
	A resolution (of singularities) of a locally integral locally Noetherian scheme $X$ is a proper birational morphism $\varphi \colon Y \to X$ such that $Y$ is regular (i.e., formally smooth over the base field), but not necessarily locally of finite type. 
	Given a scheme $X$, $\pi_0(X)$ means the number of connected components of $X$. 
	Given a morphism $f \colon X \to Y$ with a subscheme $T$ of $Y$, $f^{-1}(T)$ means the scheme-theoretic preimage of $T$ in $X$, and $\Supp (f^{-1}(T))$ refers to the support of the scheme-theoretic preimage.
	
	A curve means a connected reduced proper scheme of pure dimension one. 
	Genus of a curve (or its dual graph) means the arithmetic genus of a curve (or its dual graph) unless otherwise specified. 
	The pair $(g,n)$ of an arithmetic genus $g$ and the number $n$ of marked points of a curve is assumed to satisfy $2g-2+n>0$ unless otherwise specified. 
	Deligne--Mumford stack (DM stacks for short) means an algebraic stack with unramified diagonal as in \cite[Tag 04YW]{Stacks}.
	
	Graphs are assumed to be undirected and can have loops (self-edges) and half-edges. 
	Given a graph $\Gamma$, and $H$ is a subset of the set $V(\Gamma)$ of vertices of $\Gamma$, then $H$ as a subgraph of $\Gamma$ is understood as the complete subgraph of $\Gamma$ whose vertex set is $v(H)$, unless otherwise specified. 
	Two disjoint complete subgraphs $H_1, H_2$ of $\Gamma$ meet at $m$ number of edges if the number of edges of $\Gamma$ connecting $H_1$ and $H_2$ are exactly $m$.

	\section{Preliminaries} \label{sec:prelim}
	
	\subsection{(Relative) Normalizations of generically finite \'etale morphisms} \label{subsec:rel_normaliz}
	
	Here, we collect several basic lemmas about relative normalizations that will be used for the proof of Theorem \ref{thm:extension-stacks}. We include these for lack of a suitable reference.
	
	Recall from \cite[Tag 035H]{Stacks} that given a quasi-compact and quasi-separated morphism $f \colon Y \to X$ of schemes, the {\it relative normalization} of $X$ in $Y$ is the morphism $\nu \colon X' \to X$, where $X' \colonequals \Spec_X(\OO')$ and $\OO'$ is the integral closure of $\OO_X$ in $f_*\OO_Y$. When $f$ is proper, then \cite[Tag 03H2]{Stacks} implies that $X' \cong \Spec_X(f_*\OO_Y)$ and $f$ decomposes into the Stein factorization $Y \to X' \to X$.
	
	\medskip
	
	The following lemma shows that if $\pi \colon Y \to X$ is a finite generically \'etale morphism between normal schemes and $x \in X$, then the pullback of $\pi$ under the natural map $\Spec \OO_{X,x} \to X$ is also a finite generically \'etale morphism between normal schemes. In other words, for finite generically \'etale morphisms, relative normalization commutes with localization.
	
	\begin{lemma}\label{l:rel_normal_localization}
		Let $A$ be an integrally closed subring of a field $K$ of characteristic 0, and let $L$ be a finite separable field extension of $K$. If $B$ is the integral closure of $A$ in $L$ and $\mathfrak{p} \subset A$ is a prime ideal, then the integral closure of $A_\mathfrak{p}$ in $L$ is exactly $B \otimes_A A_\mathfrak{p}$.
	\end{lemma}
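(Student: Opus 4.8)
The plan is to reduce the statement to a standard fact about integral closure and localization. First I would recall that localization commutes with integral closure in general: if $B$ is the integral closure of $A$ in $L$, and $S \subset A$ is any multiplicative set, then $S^{-1}B$ is the integral closure of $S^{-1}A$ in $S^{-1}L$. Taking $S = A \setminus \mathfrak{p}$, we get that $B_\mathfrak{p} = B \otimes_A A_\mathfrak{p}$ is the integral closure of $A_\mathfrak{p}$ in $S^{-1}L$. The one subtlety is identifying $S^{-1}L$ with $L$ itself. Since $L$ is a field containing $K = \mathrm{Frac}(A)$ (note $A$ is a domain, being an integrally closed subring of a field), every nonzero element of $A$ is already invertible in $L$, so inverting $S$ does nothing: $S^{-1}L = L$. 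Hence $B \otimes_A A_\mathfrak{p}$ is exactly the integral closure of $A_\mathfrak{p}$ in $L$, which is the claim.

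For the proof of the general localization fact (in case we want to include it rather than cite it), the argument is short and I would spell it out. For one inclusion, elements of $S^{-1}B$ have the form $b/s$ with $b \in B$, $s \in S$; since $b$ satisfies a monic polynomial over $A$, $b/s$ satisfies a monic polynomial over $S^{-1}A$ (clear denominators appropriately), so $S^{-1}B$ consists of elements integral over $S^{-1}A$. Conversely, suppose $\ell \in L$ is integral over $S^{-1}A$, satisfying $\ell^n + (a_1/s_1)\ell^{n-1} + \dots + (a_n/s_n) = 0$. Let $s = s_1 \cdots s_n \in S$. Multiplying through by $s^n$ shows $s\ell$ is integral over $A$ (the coefficient of $(s\ell)^{n-i}$ becomes $s^i a_i \prod_{j \ne i} s_j \cdot (\text{unit adjustments})$, all in $A$), hence $s\ell \in B$, so $\ell = (s\ell)/s \in S^{-1}B$. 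This gives both inclusions.

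I do not expect any serious obstacle here; the statement is essentially bookkeeping. The only point requiring a moment's care is the hypothesis that the characteristic is $0$ (equivalently, separability of $L/K$), which guarantees $B$ is module-finite over $A$ when $A$ is Noetherian — but that finiteness is not actually needed for the localization identity itself, only the integral-closure structure, so the characteristic hypothesis is harmless and we need not invoke finiteness. The cleanest writeup is: observe $A$ is a domain with fraction field $K \subseteq L$; note $S^{-1}L = L$ for $S = A \setminus \mathfrak{p}$; then cite or prove that integral closure commutes with the localization $A \rightsquigarrow A_\mathfrak{p}$; conclude. I would keep the exposition to a few lines, possibly just referencing \cite[Tag 00GY]{Stacks} or the analogous statement for the ``commutes with localization'' property of integral closure, and adding the one-line remark that $S^{-1}L = L$.
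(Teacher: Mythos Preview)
Your proposal is correct and takes essentially the same approach as the paper: both prove the two inclusions by the clearing-denominators argument you sketch, and the paper's explicit verification that $B \otimes_A A_\mathfrak{p} \hookrightarrow L$ is precisely your observation that $S^{-1}L = L$. Your version is simply a cleaner packaging of the same computation, recognizing it as the standard ``integral closure commutes with localization'' fact rather than writing it out from scratch.
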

	\begin{proof}
		Observe that a subring $R$ of $L$ is the integral closure of $A$ in $L$ if and only if $R$ is finite over $A$, normal, and $\mathrm{Frac}(R) = L$.
		
		Notice that $B \otimes_A A_{\mathfrak{p}}$ is isomorphic to the localization of $B \subset L$ by a multiplicative subset $A \setminus \mathfrak p \subset L$, so $B \otimes_A A_{\mathfrak{p}}$ is also a subring of $L$.
		Since $B$ is the integral closure of $A$ in $L$, we see $B$ is finite over $A$, normal, and $\mathrm{Frac}(B) = L$.
		Then since $B \otimes_A A_{\mathfrak{p}}$ is the localization of $A \setminus \mathfrak p \subset L$, it is finite over $A_{\mathfrak p}$, normal, and $\mathrm{Frac}(B \otimes_A A_{\mathfrak{p}}) = L$.
		Hence, $B \otimes_A A_\mathfrak{p}$ is the integral closure of $A_{\mathfrak p}$ in $L$ by above.
	\end{proof}
	
	
	\begin{lemma}\label{l:torsor-extension-over-DVR}
		Let $D=\Spec R$ with $R$ a DVR and let $D^\circ=\Spec K$ be its generic point. Suppose that  $\pi\colon Q\to D$ is a finite morphism of normal schemes whose restriction to each irreducible component of $Q$ is surjective. If $\pi$ restricted to the generic fiber $Q^\circ \colonequals Q\times_D D^\circ$ is \'etale, then $\pi$ is the relative normalization of $D$ in $Q\times_D D^\circ$. In particular, if there exists a finite \'etale morphism $P\to D$ with $P\times_D D^\circ\simeq Q\times_D D^\circ$, then $\pi$ is \'etale.
	\end{lemma}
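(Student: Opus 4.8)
The plan is to show directly that $\pi\colon Q\to D$ \emph{is} the relative normalization of $D$ in $Q^\circ$, and then to deduce the ``in particular'' clause formally from the uniqueness of relative normalizations. Since $\pi$ is finite and $D=\Spec R$ is affine, write $Q=\Spec S$ with $S$ module-finite over $R$. As $Q$ is normal and Noetherian, $S$ is a finite product $S=\prod_i S_i$ of normal domains, and the $\Spec S_i$ are precisely the irreducible (equivalently, connected) components of $Q$. The surjectivity hypothesis forces $R\hookrightarrow S_i$ for each $i$, and a normal domain containing $R$ and module-finite over it is exactly the integral closure of $R$ in its fraction field $L_i$. Over the DVR $R$, every nonzero element of $S_i$ is integral over $R$, hence (clearing the constant term of a minimal integral equation) divides a nonzero element of $R$; thus $S_i\otimes_R K=L_i$, and so $S\otimes_R K=\prod_i L_i$ is the coordinate ring of $Q^\circ=Q\times_D D^\circ$, with each $L_i/K$ finite separable by the étale hypothesis on $Q^\circ\to D^\circ$.

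Next I would compute the relative normalization of $D$ in $Q^\circ$. By definition (\cite[Tag 035H]{Stacks}), this is $\Spec R'$, where $R'$ is the integral closure of $R$ in the pushforward $\prod_i L_i$ of $\OO_{Q^\circ}$ to $D$, with $R\to\prod_i L_i$ the composite $R\hookrightarrow K\to\prod_i L_i$. Since a tuple in a finite product of rings is integral over $R$ exactly when each of its entries is, $R'=\prod_i(\text{integral closure of }R\text{ in }L_i)=\prod_i S_i=S$. Unwinding the structure maps identifies $\Spec S=Q$ with $\Spec R'$ compatibly with $\pi$ and with the canonical open immersion $Q^\circ\hookrightarrow Q$, proving the first assertion.

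For the ``in particular'' clause, suppose $P\to D$ is finite étale with $P\times_D D^\circ\simeq Q^\circ$ over $D$. Then $P$ satisfies the hypotheses just analyzed: it is finite over $D$; it is normal, being étale over the regular (hence normal) ring $R$; the image of each connected component of $P$ is open and closed and nonempty in the connected scheme $D$, hence equals $D$; and $P\times_D D^\circ\to D^\circ$ is étale. By the first part, $P$ is therefore the relative normalization of $D$ in $P\times_D D^\circ$. Because the relative normalization depends only on the generic scheme $Q^\circ\simeq P\times_D D^\circ$ over $D$ and is unique up to canonical $D$-isomorphism, we get $Q\cong P$ over $D$; since $P\to D$ is étale, so is $\pi$.

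The commutative algebra over the DVR is routine, so I expect the only delicate point to be the final step: the \emph{a priori} merely generic isomorphism $Q\times_D D^\circ\simeq P\times_D D^\circ$ must be promoted to a $D$-isomorphism $Q\cong P$. This is exactly where one invokes the functoriality and uniqueness of the relative normalization construction, together with the first assertion, which says precisely that $Q$ (and likewise $P$) is recovered from its restriction over the generic point $D^\circ$.
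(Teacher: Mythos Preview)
Your proof is correct and follows essentially the same approach as the paper: decompose $Q=\Spec\prod_i S_i$ into normal domains, identify each $S_i$ as the integral closure of $R$ in its fraction field $L_i$, conclude that $Q$ is the relative normalization of $D$ in $Q^\circ$, and then deduce the ``in particular'' clause by applying the same reasoning to $P$ and invoking uniqueness of relative normalization. The only cosmetic difference is that the paper begins from the decomposition $Q^\circ=\sqcup_i\Spec L_i$ of the generic fiber and matches components of $Q$ to the $L_i$, whereas you start from the components $S_i$ and compute $S_i\otimes_R K=L_i$; these are two directions of the same identification.
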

	\begin{proof}
		Observe that $Q^\circ \cong \sqcup_i \Spec L_i$ with each $L_i$ a finite separable field extension of $K$. If $Q_0$ is a connected component of normal $Q$, then it must be irreducible, and $\pi$ restricts to a surjection $Q_0 \to D$ by the assumptions. This implies that $Q_0$ contains $\Spec L_i$ for exactly one $i$. Note that $Q_0$ is irreducible and affine because $Q$ is affine as a finite cover of affine $D$. As $Q_0$ is a normal affine scheme that contains $\Spec L_i$ as its generic point, $H^0(Q_0)$ is integrally closed in $L_i$. As $Q_0 \to D$ finite implies that the pullback $R \to H^0(Q_0)$ is an integral ring extension, $H^0(Q_0)$ must be the integral closure of $R$ in $L_i$. Hence, $Q_0$ is the relative normalization of $D$ in $\Spec L_i$; as a result, $Q$ is also the relative normalization of $D$ in $Q^\circ$.
		
		In particular, if $\eta \colon P \to D$ is finite \'etale, then $P$ is regular because $D$ is as well. Then, the above argument applied to $\eta$ implies that $P$ is also the relative normalization of $D$ in $P\times_D D^\circ \cong Q^\circ$, so that $P \cong Q$ by the uniqueness of the relative normalization. This implies that $\pi \colon Q \to D$ must be \'etale as well.
	\end{proof}

	Combining Lemma~\ref{l:rel_normal_localization} and Lemma~\ref{l:torsor-extension-over-DVR}, we obtain the following corollary:
	\begin{corollary}\label{cor:rel_normal_DVR}
		Suppose that $f \colon Y \to X$ is a finite generically \'etale morphism of normal schemes. Let $x$ be a codimension $1$ point of $X$, so that $\OO_{X,x}$ is a DVR with an induced map $i \colon D \colonequals \Spec \OO_{X,x} \to X$. Then the pullback $f' \colon Y' \to D$ of $f$ along $i$ is finite generically \'etale. Furthermore, if $p \colon P \to D$ is finite \'etale whose generic fiber is isomorphic to the generic fiber of $f'$, then $p$ is isomorphic to $f'$ (i.e., $P \cong Y'$).
	\end{corollary}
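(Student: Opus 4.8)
The plan is to verify directly that $f'\colon Y'\to D$ satisfies the hypotheses of Lemma~\ref{l:torsor-extension-over-DVR}, using Lemma~\ref{l:rel_normal_localization} to supply normality of $Y'$, and then to conclude via uniqueness of the relative normalization. First I would reduce to the case where $X$ and $Y$ are integral and $f$ is dominant: since $X$ is normal it is a finite disjoint union of integral normal schemes and $D=\Spec\OO_{X,x}$ lies over the one containing $x$, while $Y$ is a disjoint union of integral normal schemes finite over $X$, of which only those dominating $X$ are relevant over $D$ (as $f$ is generically \'etale, hence dominant on those components). So assume $X$ integral with function field $K$, $Y$ integral with function field $L$ a finite separable extension of $K$ (separable since we are in characteristic zero), and fix an affine open $U=\Spec A\subseteq X$ containing $x$ with corresponding prime $\mathfrak{p}\subset A$, so that $A_\mathfrak{p}=\OO_{X,x}$ is a DVR, $A$ is integrally closed in $K$, and $Y|_U=\Spec B$ with $B$ the integral closure of $A$ in $L$.

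The first assertion is then immediate: $f'$ is finite as a base change of $f$, and $Y'\times_D D^\circ\cong Y\times_X\Spec K$ is the fiber of $f$ over the generic point of $X$; since $f$ is \'etale over a dense open and the generic point lies in every dense open, this fiber is \'etale over $\Spec K$, so $f'$ is generically \'etale. For the rest, the crucial observation is that $Y'=\Spec(B\otimes_A A_\mathfrak{p})$, which by Lemma~\ref{l:rel_normal_localization} equals $\Spec$ of the integral closure of the DVR $A_\mathfrak{p}$ in $L$; hence $Y'$ is an integral normal scheme, finite over $D$, and --- being a finite extension of the domain $A_\mathfrak{p}$ --- surjecting onto $D$. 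All hypotheses of Lemma~\ref{l:torsor-extension-over-DVR} therefore hold with $\pi=f'$, and the lemma shows that $f'\colon Y'\to D$ is the relative normalization of $D$ in $Y'\times_D D^\circ$.

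Finally, for the ``furthermore'' clause, suppose $p\colon P\to D$ is finite \'etale with $P\times_D D^\circ\cong Y'\times_D D^\circ$. I would re-run this reasoning on $p$: as $D$ is regular, $P$ is normal, and decomposing $P$ into its (integral, normal) connected components and repeating the argument in the proof of Lemma~\ref{l:torsor-extension-over-DVR} shows that $P$ is also the relative normalization of $D$ in $P\times_D D^\circ$. Since the two generic fibers are identified and the relative normalization is unique, we obtain $P\cong Y'$ as schemes over $D$, i.e.\ $p\cong f'$. I do not expect a genuine obstacle: the only point that needs care is the reduction to the integral, dominant situation, which is precisely what makes Lemma~\ref{l:rel_normal_localization} (phrased for a single separable field extension) and the component-surjectivity hypothesis of Lemma~\ref{l:torsor-extension-over-DVR} directly applicable; once that is arranged, the two lemmas combine formally.
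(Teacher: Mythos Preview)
Your proof is correct and follows essentially the same approach as the paper: reduce to an affine neighborhood of $x$, invoke Lemma~\ref{l:rel_normal_localization} to see that $Y'$ is the spectrum of the integral closure of the DVR $\OO_{X,x}$ in $L$ (hence normal), and then apply Lemma~\ref{l:torsor-extension-over-DVR} to identify both $Y'$ and $P$ with the relative normalization of $D$ in their common generic fiber. The paper's version is marginally terser, appealing directly to the ``in particular'' clause of Lemma~\ref{l:torsor-extension-over-DVR} rather than re-deriving it for $P$, but the substance is identical.
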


	\begin{proof}
		Assume that $X_0$ is an open affine subscheme of the connected component of $X$ containing $x$; since $X$ is normal, $X_0$ is a normal irreducible affine scheme. Define $f_0 \colon Y_0 \to X_0$ to be the restriction of $f$ over $X_0$. By the assumptions on $f$, $f_0$ is also a finite morphism between normal schemes, so that $Y_0$ is affine as well. Generic \'etaleness of $f$ implies that $f_0$ is also finite generically \'etale, so that each irreducible component of $Y_0$ surjects onto $X_0$ via $f_0$. Since $i \colon D \to X$ factors through $X_0$ via $i_0 \colon D \to X_0$, $f'$ is isomorphic to the pullback of $f_0$ along $i$, which is finite with $Y_0$ normal by Lemma~\ref{l:rel_normal_localization}. Because the generic point of $D$ maps isomorphically to the generic point of $X_0$, $f'$ is also generically \'etale as the pullback of $f_0$. This proves the first assertion.
		
		The second assertion follows from Lemma~\ref{l:torsor-extension-over-DVR} because $f' \colon Y' \to D$ is a finite generically \'etale morphism of normal schemes whose generic fiber is isomorphic to $p \colon P \to D$.
	\end{proof}
	
	\subsection{Locally algebraically simply connected schemes} \label{subsec:alsc}
We say a connected scheme is \emph{algebraically simply connected} if there is no nontrivial connected finite \'etale cover. 
There are local versions of simple connectedness by \cite{Kol93, Tak03}. The algebraic analog of locally simply connected is described in \cite[\S 7]{Kol93} by first defining and proving properties for normal analytic spaces over $\mathbb{C}$ then describing how those notions naturally extend to normal schemes defined over any algebraically closed field of characteristic zero. In that regard, we describe the scheme-theoretic versions of definitions and properties in \cite[\S 7]{Kol93}:
	
	\begin{definition}[\cite{Kol93}*{Definition 7.1}] \label{def:lasc}
		A normal scheme $X$ locally of finite type is {\em locally algebraically simply connected} ({\em lasc} for short) if for every closed point $x$ of $X$, there is a resolution of singularities $Y_x \to \Spec \OO_{X,x}^h$ such that $\pi^{\text{\'et}}_1(Y_x) = 1$.
	\end{definition}

	As explained in \cite[Definition 7.1]{Kol93}, $\pi_1^{\text{\'et}}(Y_x)$ is independent of the choice of resolution $f \colon Y_x \to \Spec \OO_{X,x}^h$.
	
	Theorem~\ref{thm:klt_implies_lasc} below provides a large class of examples that are locally algebraically simply connected. Recall that the pair $(X,\Delta)$ of a normal scheme $X$ and an effective $\QQ$-divisor $\Delta$ is {\em Kawamata log terminal} ({\em klt} for short) if there exists a log resolution of singularities $f \colon Y \to X$ such that $f^*(K_X+\Delta) =_{\QQ} K_Y + \sum a_i E_i$ as $\QQ$-Cartier divisors, where $=_{\QQ}$ means the equivalence in $\Div(Y) \otimes \QQ$, each $E_i$ is either an exceptional divisor of $f$ or a proper transform of an irreducible component of $\Supp \Delta$, and $a_i < 1$ for each $i$. Recall that log canonical ({\em lc} for short) means that $a_i \le 1$ for each $i$. Using the discrepancy interpretation $a_i = -a(E_i,X,\Delta)$ as in \cite[Definition 2.25]{KM98}, the definition for klt/lc is independent of the choice of a log resolution $f$. It turns out that klt provides a nice criterion for being locally algebraically simply connected:
	
	\begin{theorem}[{\cite[Theorem 7.5]{Kol93}}] \label{thm:klt_implies_lasc}
		Let $X$ be a normal scheme locally of finite type. If $(X,\Delta)$ is a klt pair for some effective $\QQ$-divisor $\Delta$, then $X$ is locally algebraically simply connected.
	\end{theorem}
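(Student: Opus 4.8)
The plan is to verify Definition~\ref{def:lasc} directly: fix a closed point $x\in X$, choose a resolution $Y_x\to\Spec\OO_{X,x}^h$, and show $\pi_1^{\text{\'et}}(Y_x)=1$ (the choice of resolution being irrelevant, as noted after Definition~\ref{def:lasc}).

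\emph{Reduction to the exceptional fibre.} Since $\pi_1^{\text{\'et}}$ is unchanged under base change along an extension of algebraically closed fields of characteristic zero, and since the klt condition is analytic-local, I would pass to $\mathbb{C}$ and to a small contractible Stein representative of the germ $(X,x)$. Choosing a log resolution $f\colon\widetilde X\to X$ near $x$ with exceptional fibre $E=f^{-1}(x)=\bigcup_i E_i$ a simple normal crossings divisor, one has $Y_x=\widetilde X\times_X\Spec\OO_{X,x}^h$, and by proper base change over the Henselian local ring $\pi_1^{\text{\'et}}(Y_x)\cong\pi_1^{\text{\'et}}(E)$; over $\mathbb{C}$ this is the profinite completion of $\pi_1^{\mathrm{top}}(E)$. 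Shrinking the Stein neighbourhood, the analytic total space retracts onto $E$, so the theorem reduces to the topological statement $\pi_1(E)=1$.

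\emph{van Kampen and the klt inputs.} Since $E$ is simple normal crossings with smooth proper strata $E_I=\bigcap_{i\in I}E_i$, $\pi_1(E)$ admits a van Kampen presentation built from the $\pi_1(E_I)$ with combinatorics governed by the dual complex $\mathcal{DR}(E)$; in particular, if all strata are simply connected then $\pi_1(E)\cong\pi_1(\mathcal{DR}(E))$. It therefore suffices to prove (a) each stratum $E_I$ is simply connected, and (b) $\mathcal{DR}(E)$ is simply connected. Both come from the klt hypothesis. For (b), the dual complex of a klt singularity is contractible (the Koll\'ar--Xu theory of dual complexes; alternatively one runs a relative MMP over the germ to collapse $E$ to a single divisor without changing the relevant homotopy type). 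For (a), all discrepancies over a klt point exceed $-1$, which forces the exceptional divisors over $x$ to be of log Fano type --- most transparently after passing to a $\QQ$-factorial dlt modification, or extracting a Koll\'ar component and running a relative MMP --- so that the $E_I$, or birational models of them, are rationally connected by Zhang and Hacon--McKernan, hence simply connected since a smooth proper rationally connected variety is simply connected (Campana; Koll\'ar--Miyaoka--Mori).

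\emph{Main obstacle.} The genuine content is step (a)--(b): the rational-connectedness and contractibility statements. The abelian --- hence, by d\'evissage on finite quotients, solvable --- finite \'etale covers of $E$ can already be ruled out by softer means: klt singularities are rational (Elkik; Kawamata--Viehweg vanishing), so $R^i f_*\OO_{\widetilde X}=0$ for $i>0$, which together with the negativity of $-E$ over the germ yields $H^i(E,\OO_E)=0$ for $i>0$ and $\operatorname{Pic}(E)$ torsion-free. Eliminating the remaining non-abelian covers is exactly where rational connectedness, and with it the \emph{full} strength of klt, is indispensable: the log canonical (but not klt) cone over an elliptic curve has exceptional fibre an elliptic curve with $\pi_1=\ZZ^2$, so no weakening of the hypothesis to log canonicity survives.
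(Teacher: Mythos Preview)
The paper does not prove this statement; it is quoted from \cite[Theorem 7.5]{Kol93} without argument, so there is no in-paper proof to compare your sketch against.

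Your route is a plausible modern one, but note that it rests on results (de Fernex--Koll\'ar--Xu on dual complexes, Hacon--McKernan and Zhang on rational connectedness of log Fanos) that post-date Koll\'ar's 1993 paper by well over a decade; whatever argument the citation points to, it is not this one.

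There is also a genuine loose end in step~(a). You set up the van Kampen reduction on an \emph{SNC} log resolution, where the strata $E_I$ are smooth but have no reason to be log Fano; you then invoke adjunction on a \emph{dlt} modification, where the strata are log Fano but typically singular (with klt singularities) and the exceptional locus is not SNC. The phrase ``or birational models of them'' flags the issue without resolving it: rational connectedness of a smooth resolution $\widetilde{E_I}$ gives $\pi_1(\widetilde{E_I})=1$, but deducing $\pi_1(E_I)=1$ for the singular dlt stratum is exactly an instance of Lemma~\ref{l:lasc_et} applied to $E_I$ --- which needs $E_I$ to be locally algebraically simply connected, i.e.\ the very theorem you are proving, in lower dimension. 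The argument can be closed by induction on $\dim X$, and the SNC/dlt mismatch can be bridged via the model-independence results in the de Fernex--Koll\'ar--Xu theory, but as written your outline is circular at this step and does not explain how the two models are reconciled.
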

	
	When $X$ as in Theorem~\ref{thm:klt_implies_lasc} is defined over $\mathbb{C}$, then Takayama showed in \cite[Theorem 1.1]{Tak03} that any resolution $f \colon Y \to X$ induces an isomorphism $f_* \colon \pi_1(Y) \to \pi_1(X)$. We only make use of $\pi_1^{\text{\'et}}$ as we only deal with $G$-torsors for finite group $G$ for the purpose of applications. The algebraic version of Takayama's \cite[Theorem 1.1]{Tak03} is the following by \cite[Lemma 7.2]{Kol93}:
	
	\begin{lemma}[\cite{Kol93}*{Lemma 7.2}] \label{l:lasc_et}
		Suppose that $X$ is a normal scheme locally of finite type. If $X$ is locally algebraically simply connected and $f \colon Y \to X$ is a resolution of singularities, then the pushforward $f_* \colon \pi_1^{\text{\'et}}(Y) \to \pi_1^{\text{\'et}}(X)$ is an isomorphism.
	\end{lemma}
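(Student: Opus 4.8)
The plan is to show separately that $f_*\colon\pi_1^{\text{\'et}}(Y)\to\pi_1^{\text{\'et}}(X)$ is surjective and injective, with only injectivity using the hypothesis that $X$ is locally algebraically simply connected. For surjectivity I would argue as in the classical case: since $f$ is proper and birational onto the normal scheme $X$, the pushforward of $\OO_Y$ along $f$ equals $\OO_X$, so $f$ has geometrically connected fibres; given a connected finite \'etale cover $X''\to X$, flat base change shows $Y\times_X X''\to X''$ again pushes $\OO$ forward to $\OO_{X''}$, hence has connected fibres, and $Y\times_X X''$ is therefore connected because $X''$ is. As this holds for every connected finite \'etale cover of $X$, the map $f_*$ is surjective; no smoothness or lasc input enters here, and only this base-changed form of the standard $\mathrm{SGA}1$ connectedness fact is needed.

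For injectivity, the plan is to show that every connected finite \'etale cover $g\colon Y'\to Y$ occurs as a connected component of the pullback of a finite \'etale cover of $X$, which is equivalent to $f_*$ being injective. Given $g$, since $f\circ g$ is proper I would form its Stein factorization $Y'\to X'\to X$, so that $\nu\colon X'\to X$ is the relative normalization of $X$ in $Y'$: it is finite, $X'$ is normal and connected, $Y'\to X'$ has geometrically connected fibres, and $\nu$ is \'etale over the dense open $U\subset X$ on which $f$ restricts to an isomorphism. The \textbf{key claim} is that $\nu$ is finite \'etale over all of $X$. Granting this, $Y'\to Y$ factors through the finite \'etale cover $X'\times_X Y\to Y$; since $Y'$ is connected and normal it lands in a single connected component $W_0$ of the normal scheme $X'\times_X Y$, and the induced finite morphism $Y'\to W_0$ is an isomorphism over the dense open $f^{-1}(U)$, hence an isomorphism by Zariski's Main Theorem. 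Since $g$ was arbitrary, this yields injectivity of $f_*$ through the usual dictionary between open subgroups of $\pi_1^{\text{\'et}}$ and finite \'etale covers.

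The substance of the argument — and what I expect to be the main obstacle — is the key claim that $\nu\colon X'\to X$ is \'etale; note that Zariski--Nagata purity of the branch locus is unavailable because $X$ is only normal, not regular, so the lasc hypothesis must be invoked. Since $\nu$ is finite and $X$ is Jacobson, it suffices to prove $\nu$ is \'etale at each closed point $x$, and by faithfully flat descent this reduces to showing the base change $X'\times_X\Spec\OO_{X,x}^h\to\Spec\OO_{X,x}^h$ is \'etale. I would then set $Y_{(x)}=Y\times_X\Spec\OO_{X,x}^h$; since $\Spec\OO_{X,x}^h\to X$ is a regular morphism, $Y_{(x)}$ is regular, and it is connected and proper birational over $\Spec\OO_{X,x}^h$, hence a resolution of $\Spec\OO_{X,x}^h$ (if one prefers, replace $Y_{(x)}$ by a resolution dominating it). By the lasc hypothesis together with the independence of $\pi_1^{\text{\'et}}$ of the choice of resolution, $\pi_1^{\text{\'et}}(Y_{(x)})=1$, so the finite \'etale cover $Y'\times_Y Y_{(x)}\to Y_{(x)}$ is split, i.e.\ isomorphic to $\bigsqcup_{i=1}^{d}Y_{(x)}$ with $d=\deg g$. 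Finally, formation of the relative normalization commutes with the flat base change $\Spec\OO_{X,x}^h\to X$ (as $Y'\to X$ is proper, the relative normalization is $\Spec$ of the pushforward of $\OO_{Y'}$, and $H^0$ commutes with flat base change for proper morphisms), while the proper birational $Y_{(x)}\to\Spec\OO_{X,x}^h$ pushes $\OO$ forward to $\OO$ by normality; hence $X'\times_X\Spec\OO_{X,x}^h$ is the relative normalization of $\Spec\OO_{X,x}^h$ in $\bigsqcup_{i=1}^{d}Y_{(x)}$, namely $\bigsqcup_{i=1}^{d}\Spec\OO_{X,x}^h$, which is \'etale. The points I would be most careful about are the regularity of $Y_{(x)}$ (that henselizing the base is a regular operation), the base-change compatibility of relative normalization, and the Zariski's Main Theorem step; the conceptual crux is simply that a finite \'etale cover of $Y$ becomes trivial on each local resolution and therefore descends, \'etale, through the relative normalization.
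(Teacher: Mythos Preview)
The paper does not supply its own proof of this lemma; it is stated with attribution to \cite{Kol93}*{Lemma 7.2} and used as a black box. Your proposal is a correct and essentially complete reconstruction of the standard argument: surjectivity of $f_*$ from geometric connectedness of the fibres of $f$, and injectivity by forming the Stein factorization $Y'\to X'\to X$ of a connected finite \'etale cover $Y'\to Y$ and proving $X'\to X$ is \'etale by checking it after base change to $\Spec\OO_{X,x}^h$, where the lasc hypothesis trivializes the pulled-back cover of the local resolution $Y_{(x)}$.

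The technical points you single out are the right ones and are handled correctly. Regularity and connectedness of $Y_{(x)}=Y\times_X\Spec\OO_{X,x}^h$, and the fact that it is a resolution of $\Spec\OO_{X,x}^h$, are worked out explicitly in the paper in the proof of Theorem~\ref{thm:asc_fib_lasc}, confirming your treatment. Compatibility of the Stein factorization with the flat base change $\Spec\OO_{X,x}^h\to X$ follows, as you say, from flat base change for $R^0(f\circ g)_*$ of the proper map $f\circ g$; this identifies $X'\times_X\Spec\OO_{X,x}^h$ with the Stein factorization of $\bigsqcup_d Y_{(x)}\to\Spec\OO_{X,x}^h$, which is $\bigsqcup_d\Spec\OO_{X,x}^h$ since each $Y_{(x)}\to\Spec\OO_{X,x}^h$ has $R^0$ equal to the structure sheaf. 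The final Zariski's Main Theorem step is standard. One small point worth making explicit is that checking \'etaleness of the finite map $\nu$ after pullback along $\Spec\OO_{X,x}^h\to X$ suffices for \'etaleness over the closed point $x$ because this pullback is faithfully flat onto $\Spec\OO_{X,x}$; you allude to this but phrase it as ``faithfully flat descent'' for the map to $X$, which is not quite right since $\Spec\OO_{X,x}^h\to X$ is not surjective.
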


	If we instead use the definition of locally simply connected for normal varieties defined over $\mathbb{C}$ as in \cite[Definition 7.1]{Kol93}, then the pushforward induces an isomorphism between the topological fundamental groups by \cite[Lemma 7.2]{Kol93}, which is stronger than Lemma~\ref{l:lasc_et}.
	
	An immediate consequence of Lemma~\ref{l:lasc_et} is the following:
	
	\begin{proposition} \label{prop:lasc_rel_normaliz_et}
		Let $f \colon Y \to X$ as in Lemma~\ref{l:lasc_et}, and let $p \colon P \to Y$ be a finite degree $d$ \'etale cover of $Y$. Consider the following commutative diagram:
		\[
			\xymatrix{
				P \ar[r]^{f'} \ar[d]_{p} & Q \ar[d]^{p'}\\
				Y \ar[r]_{f} & X,
			}
		\]
		where $p' \circ f'$ is the Stein factorization of $f \circ p$. Then $p'$ is a finite degree $d$ \'etale cover.
	\end{proposition}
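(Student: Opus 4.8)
The plan is to transport the \'etale cover $p\colon P\to Y$ down to $X$ using Lemma~\ref{l:lasc_et}, and then to recognize the resulting cover of $X$ as the Stein factorization $Q$. Since $f$ is a resolution of the normal scheme $X$, it is proper and birational, so $f_*\OO_Y=\OO_X$ (the Stein factorization of $f$ is $Y\to\Spec_X f_*\OO_Y\to X$ with the second map finite and birational, hence an isomorphism onto the normal scheme $X$); in particular $f$ induces a bijection on connected components, and since every construction in sight is compatible with passing to connected components, we may assume $X$, hence $Y$, is connected. Then Lemma~\ref{l:lasc_et} says $f_*\colon\pi_1^{\text{\'et}}(Y)\to\pi_1^{\text{\'et}}(X)$ is an isomorphism, so pullback along $f$ is an equivalence between the category of finite \'etale covers of $X$ and that of finite \'etale covers of $Y$. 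Let $q\colon\widetilde Q\to X$ be the finite \'etale cover corresponding to $p\colon P\to Y$ under this equivalence, so that there is an isomorphism $\widetilde Q\times_X Y\cong P$ of schemes over $Y$; since degree is preserved under base change, $q$ has degree $d$.

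It remains to identify $\widetilde Q$ with $Q$ over $X$. Let $f_{\widetilde Q}\colon \widetilde Q\times_X Y\to\widetilde Q$ be the base change of $f$ along $q$. Because $q$ is \'etale, hence flat, flat base change gives $(f_{\widetilde Q})_*\OO_{\widetilde Q\times_X Y}\cong q^*f_*\OO_Y\cong q^*\OO_X\cong\OO_{\widetilde Q}$. Transporting this through the isomorphism $\widetilde Q\times_X Y\cong P$ over $Y$ (which, being an isomorphism over $Y$, is also compatible with the two structure maps to $X$), we obtain a factorization $f\circ p\colon P\to\widetilde Q\to X$ in which $\widetilde Q\to X$ is finite, in particular affine, and the pushforward of $\OO_P$ along $P\to\widetilde Q$ is $\OO_{\widetilde Q}$. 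Pushing forward along the affine morphism $q$ then gives $(f\circ p)_*\OO_P\cong q_*\OO_{\widetilde Q}$, so that $Q=\Spec_X (f\circ p)_*\OO_P\cong\widetilde Q$ over $X$. Hence $p'$ is identified with $q$, a finite \'etale cover of degree $d$, as claimed.

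The step I expect to be the crux is the identification of $Q$ with $\widetilde Q$ in the second paragraph: one must check that flat base change of the equality $f_*\OO_Y=\OO_X$ along the \'etale morphism $q$ genuinely produces the sheaf of $\OO_X$-algebras defining $Q$, and that the isomorphism $\widetilde Q\times_X Y\cong P$ furnished by the equivalence of categories is compatible with the structure morphisms to $X$, so that the two factorizations of $f\circ p$ may legitimately be compared via uniqueness of the Stein factorization. The essential geometric input underlying everything is that a resolution of a normal scheme has connected fibers, i.e.\ $f_*\OO_Y=\OO_X$, which is precisely what lets flat base change compute $(f_{\widetilde Q})_*\OO_{\widetilde Q\times_X Y}$ on the nose.
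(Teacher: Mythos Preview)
Your proof is correct and follows the same overall strategy as the paper: reduce to connected $X$, invoke Lemma~\ref{l:lasc_et} to produce a finite \'etale cover $\widetilde Q\to X$ whose pullback to $Y$ recovers $P$, and then identify $\widetilde Q$ with the Stein factorization $Q$. The identification step, however, is carried out differently. The paper uses the universal property of relative normalization to obtain a map $h\colon Q\to P'$, observes that $h$ is the relative normalization of the normal scheme $P'$ in $P$, and then argues via Nagata-type finiteness that $h$ is a finite birational map onto a normal target, hence an isomorphism. Your route is more direct: you use $f_*\OO_Y=\OO_X$ together with flat base change along the \'etale map $q$ to compute $(f\circ p)_*\OO_P\cong q_*\OO_{\widetilde Q}$ on the nose, so that $Q=\Spec_X(f\circ p)_*\OO_P\cong\widetilde Q$ immediately. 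Your approach avoids the detour through relative normalization and the Nagata hypothesis, at the cost of relying on flat base change for pushforwards along a proper morphism; both are standard, and your version is arguably cleaner.
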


	\begin{proof}
		Without loss of generality, assume that $X$ is connected, i.e., irreducible as $X$ is normal. By Lemma~\ref{l:lasc_et}, there exists a finite degree $d$ \'etale cover $p'' \colon P' \to X$ such that the following is a Cartesian diagram:
		\[
			\xymatrix{
				P \ar[r]^{f''} \ar[d]_{p} & P' \ar[d]^{p''}\\
				Y \ar[r]_{f} & X.
			}
		\]
		
		Since $Q$ is a relative normalization of $X$ in $P$ by the Stein factorization property (see \cite[Tag 03H0]{Stacks}), the universal property of relative normalization as in \cite[Tag 035I]{Stacks} implies that there is a morphism $h \colon Q \to P'$ such that the following diagram commutes:
		\[
		\xymatrix{
			P \ar[r]^{f''} \ar[d]_{f'} & P' \ar[d]^{p''}\\
			Q \ar[r]_{p'} \ar[ru]^{h} & X.
		}
		\]
		Since $p''$ is finite and $X$ is locally of finite type, $P'$ is also locally of finite type, hence $P'$ is a Nagata scheme by \cite[Tag 035B]{Stacks}. Moreover, \cite[Tag 035I]{Stacks} implies that $h$ is the relative normalization of $P'$ in $P$, which is a birational morphism as $f''$ is proper birational as a resolution of singularities of $P'$. 
		Since the conditions of \cite[Tag 03GR]{Stacks} are satisfied, $h$ is also finite. Hence, $h \colon Q \to P'$ is an isomorphism by \cite[Tag 0AB1]{Stacks}.
	\end{proof}

	Proposition~\ref{prop:lasc_rel_normaliz_et} has the following useful consequence:~suppose $X$ is a normal scheme which is locally of finite type, $f\colon Y\to X$ is a resolution, and $p \colon P \to Y$ is a finite \'etale cover such that the relative normalization $p' \colon Q \to X$ of $X$ in $P$ is finite but not \'etale, then $X$ is not locally algebraically simply connected. 
	Using this criterion, Example~\ref{ex:lc_not_lasc_general} 
	gives examples of normal schemes that are not locally algebraically simply connected.
	
	\begin{example}[{Non-lasc elliptic/rational singularity}]
	\label{ex:lc_not_lasc_general}
		We provide two examples of normal schemes that are not locally algebraically simply connected:~one example has an elliptic singularity and the other a rational singularity. The argument, motivated by the previous paragraph and \cite[Example 3]{Sti17}, is via constructing a non-\'etale finite degree 2 cover of $X$ that is a relative normalization of a finite \'etale cover of the resolution of singularities of $X$.
		
		Consider the case when $\rho \colon \tilde{E} \to E$ is an \'etale degree $2$ cover between connected smooth projective varieties, where $K_E$ is a numerically trivial divisor. Suppose that $\cL$ is a very ample line bundle on $E$. By \cite[II. Ex. 5.14]{Har77}, we can replace $\cL$ by a suitably large positive tensor power so that the affine cone of the closed embedding of $E$ (resp., $\tilde{E}$) into $\PP H^0(\cL)$ (resp., $\PP H^0(\rho^*\cL)$) is normal; in this case, the corresponding affine cones are
		\[
			X_E \colonequals \Spec \bigoplus_{m \ge 0} H^0(\cL^m), \;\;\; X_{\tilde{E}} \colonequals \Spec \bigoplus_{m \ge 0} H^0(\rho^*\cL^m).
		\]
		Note that $\rho$ induces natural maps $H^0(\cL^m) \to H^0(\rho^*\cL^m)$ for each $m \ge 0$, which induces a finite morphism $\pi \colon X_{\tilde{E}} \to X_E$; this morphism is \'etale of degree $2$ away from the cone point $0 \in X_E$. Because $\Supp (\rho^{-1}(0))$ is the cone point of $X_{\tilde{E}}$ and $\pi$ is finite of degree $2$, $\pi$ is not \'etale over the cone point $0 \in X_E$.
		
		To see that $X_E$ is not locally algebraically simply connected at $0 \in X_E$, first define $X'_E \colonequals \Spec_E \mathrm{Sym}^* \cL$ and $X'_{\tilde E} \colonequals \Spec_{\tilde{E}} \mathrm{Sym}^* \rho^*\cL$. Note that $X'_E$ (resp., $X'_{\tilde E}$) is the total space of the line bundle $\cL^\vee$ (resp., $\rho^*\cL^\vee$), which is a smooth variety. Then, $\rho$ induces a degree 2 \'etale morphism $\pi' \colon X'_{\tilde E} \to X'_E$. Moreover, the natural morphisms $H^0(\cL^m) \otimes \OO_{E} \to \cL$ of sheaves over $E$ induces a birational proper morphism $\varphi \colon X'_E \to X_E$ (in fact, it is a resolution); similarly, there is an induced birational proper morphism $\varphi' \colon X'_{\tilde{E}} \to X_{\tilde{E}}$. Since $\varphi^{-1}(0) \cong E$ the zero section of $X'_E$ and $K_{X'_E}+E$ is numerically trivial along $E$, $X_E$ has a log canonical singularity at $0 \in X_E$. Since $\varphi$ and $\pi'$ are proper and $X_E$ is affine, $H^0((\varphi \circ \pi')_*\OO_{X'_{\tilde{E}}}) \cong \oplus_{m \ge 0}\, H^0 (\rho^*\cL^m)$. Then by the description of the Stein factorization as in \cite[Tag 03H0]{Stacks}, $X_{\tilde{E}}$ is the relative normalization of $X_E$ in $X'_{\tilde{E}}$, and $\pi \circ \varphi'$ is the Stein factorization of $\varphi \circ \pi$. Since $\pi$ is not \'etale, Proposition~\ref{prop:lasc_rel_normaliz_et} implies that $X_E$ is not locally algebraically simply connected.
		
		There are two notable examples where we can find such $\rho \colon \tilde{E} \to E$. The first example is when $\rho$ is a 2-isogeny between smooth elliptic curves, which is also mentioned in \cite[Example 3]{Sti17}. In this case, $X_E$ has elliptic singularity at the cone point $0$. The next example is when $E$ is an Enriques surface defined over $\mathbb{C}$ and $\tilde{E}$ is a corresponding K3 surface as an \'etale degree $2$ cover (this is not in \cite[Example 3]{Sti17}). In this case, $X_E$ has rational singularities because $H^0(R^i\varphi_*\OO_{X'_E}) \cong H^i(\oplus_{j \ge 0} \cL^j)=0$ whenever $i>0$ by the Hodge diamond of Enriques surfaces when $j=0$ and by the Kodaira vanishing theorem applied to ample line bundles $\cL^j \otimes \omega_E^{-1}$ when $j>0$.
	\end{example}

	If the exceptional locus $E$ of a given resolution $\varphi \colon Y \to X$ over an isolated singular point $p \in X$ is a smooth projective variety but $\pi_1^{\text{\'et}}(E) \neq 1$, then 
	arguments analogous to those in Example~\ref{ex:lc_not_lasc_general} implies $X$ is not locally algebraically simply connected. Theorem \ref{thm:asc_fib_lasc} below handles the case when $\pi_1^{\text{\'et}}(E) = 1$; this result will be useful later on in our study of stable axis-like curves. 
With this notion, we prove the following theorem, which is an application of \cite[Lemma 2]{Sti17}.
	
	\begin{theorem}\label{thm:asc_fib_lasc}
		Suppose that $X$ is a normal scheme locally of finite type. If $\varphi \colon Y \to X$ is a resolution of singularities such that $\varphi^{-1}(x)$ is algebraically simply connected for every closed point $x$ of $X$, then $X$ is locally algebraically simply connected.
	\end{theorem}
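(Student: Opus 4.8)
The plan is to reduce the global hypothesis to a statement about each henselian local ring $\OO_{X,x}^h$, and then to deduce triviality of $\pi_1^{\text{\'et}}$ from an étale-$\pi_1$ specialization theorem for proper families over a henselian local base, which is the content of \cite[Lemma 2]{Sti17}.

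First I would fix a closed point $x\in X$, set $S\colonequals\Spec R$ with $R\colonequals\OO_{X,x}^h$, write $s$ for the closed point of $S$ and $k$ for its residue field (the algebraically closed base field), and base change: $\varphi_x\colon Y_x\colonequals Y\times_X S\to S$. Since $S\to X$ is a cofiltered limit of étale morphisms, $Y_x$ is again regular and $\varphi_x$ is again proper and birational, so $\varphi_x$ is a resolution of $S$; as $\pi_1^{\text{\'et}}$ of the source of a resolution of $S$ does not depend on the chosen resolution (\cite[Definition 7.1]{Kol93}), it is enough to prove $\pi_1^{\text{\'et}}(Y_x)=1$. I would then record two facts about $\varphi_x$: (a) since $X$ is normal and $\varphi$ is proper birational we have $\varphi_*\OO_Y=\OO_X$, and flat base change along $S\to X$ gives $\varphi_{x*}\OO_{Y_x}=\OO_S$; in particular $\varphi_x$ has geometrically connected fibers, and since $S$ is connected $Y_x$ is connected; (b) henselization does not change residue fields, so the closed fiber of $\varphi_x$ is $\varphi^{-1}(x)$, which by hypothesis is connected and algebraically simply connected.

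The core of the proof is then to show that every connected finite étale cover $q\colon Z\to Y_x$ is an isomorphism. The composite $Z\to Y_x\to S$ is proper, so its Stein factorization $Z\to\Spec A\to S$ has $A$ module-finite over the Noetherian henselian local ring $R$; connectedness of $Z$ forces $\Spec A$ connected, hence $A$ local, hence $A\otimes_R k$ Artinian local, so the fiber of $\Spec A\to S$ over $s$ is a single (possibly non-reduced) point. Consequently $Z_s\colonequals Z\times_S\Spec k$ is a fiber of the Stein map $Z\to\Spec A$, and is therefore connected. Now $Z_s\to\varphi^{-1}(x)$ is a connected finite étale cover of an algebraically simply connected scheme, hence an isomorphism; comparing degrees (which are constant on the connected base $Y_x$ and are preserved by restriction to the closed fiber) forces $\deg q=1$, so $q$ is an isomorphism. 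Thus $\pi_1^{\text{\'et}}(Y_x)=1$ for every closed point $x$, and $X$ is locally algebraically simply connected.

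I expect the only real obstacle to be the connectedness of $Z_s$, equivalently the surjectivity of the specialization map $\pi_1^{\text{\'et}}(\varphi^{-1}(x))\to\pi_1^{\text{\'et}}(Y_x)$ for the proper family $\varphi_x$ over the henselian local base $S$ --- this is exactly where \cite[Lemma 2]{Sti17} enters; everything else is bookkeeping about base change to the henselization (regularity, properness, $\varphi_*\OO=\OO$, unchanged residue field) together with the fact that $\pi_1^{\text{\'et}}$ of a resolution of $\Spec\OO_{X,x}^h$ is independent of the resolution.
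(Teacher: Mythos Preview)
Your proposal is correct and follows the same overall architecture as the paper: reduce to the henselian local ring at a closed point, base change the given resolution to obtain $\varphi_x\colon Y_x\to S=\Spec\OO_{X,x}^h$, and then show $\pi_1^{\text{\'et}}(Y_x)=1$ by analyzing a finite \'etale cover via the Stein factorization of its composite with $\varphi_x$.

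The one noteworthy difference is at the key step. The paper shows the Stein factor $P'\to S$ is \'etale by invoking (a slight extension of) \cite[Lemma 2]{Sti17}, which in turn rests on \cite[\S 5.7]{MO15}; it then uses $\pi_1^{\text{\'et}}(S)=1$ to trivialize $P'$ and finally counts connected components. Your route is more self-contained: you observe that a finite algebra over a henselian local ring is a product of local rings, so connectedness of $Z$ forces $A$ local, whence the special fiber of $\Spec A\to S$ is a single topological point and $Z_s$ is (topologically identified with) a fiber of the Stein map $Z\to\Spec A$, hence connected. This argument is complete as written, and you do not actually need to appeal to \cite[Lemma 2]{Sti17}; your last paragraph undersells your own proof. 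What you have done is essentially a direct verification of the surjectivity of the specialization map $\pi_1^{\text{\'et}}(\varphi^{-1}(x))\to\pi_1^{\text{\'et}}(Y_x)$ in this setting, bypassing the external reference. Either approach works; yours is a bit more elementary, while the paper's packages the step into a citable lemma.
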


	\begin{proof}
		By Definition~\ref{def:lasc}, it suffices to consider a resolution of $\Spec \OO^h_{X,x}$ for every closed point $x$ of $X$ and then check that the \'etale fundamental group of the resulting regular scheme is trivial. 
		
		First, notice that $X$ is a normal scheme locally of finite type defined over an algebraically closed field of characteristic zero, so that $\OO^h_{X,x}$ is a normal Noetherian local ring by \cite[Tags 0CBM and 06LJ]{Stacks} and the map $\psi' \colon \Spec \OO^h_{X,x} \to \Spec \OO_{X,x}$ is formally \'etale;
		furthermore, $\psi'$ induces an isomorphism between the closed points. Since the induced map $j_x \colon \Spec \OO_{X,x} \to X$ is also formally \'etale, the composition $\psi \colonequals j_x \circ \psi'$ 
		is a formally \'etale morphism between normal locally Noetherian schemes. Note that the closed point of $\Spec \OO^h_{X,x}$ is mapped isomorphically to $x \in X$ by $\psi$, whose residue field is algebraically closed. 

		Let $x$ also refer to the closed point of $\Spec \OO^h_{X,x}$ and consider the pullback $\varphi_x \colon Y_x \colonequals Y \times_X \Spec \OO^h_{X,x} \to \Spec \OO^h_{X,x}$ of $\varphi$ by $\psi$; we claim that $\varphi_x$ is a resolution. Since $\varphi$ is a proper surjection, so is its pullback $\varphi_x$. Also, $\Spec \OO^h_{X,x}$ is irreducible if and only if $\Spec \OO_{X,x}$ is irreducible by \cite[Tag 06DH]{Stacks}. Since $\mathrm{Frac}(\OO^h_{X,x}) / \mathrm{Frac}(\OO_{X,x})$ is a separable field extension by definition, the generic point of $\Spec \OO^h_{X,x}$ maps to the generic point of $\Spec \OO_{X,x}$. Therefore, the image under $\psi$ of the generic point of $\Spec \OO^h_{X,x}$ is the generic point of $X$, so $\varphi_x$ is also birational. 
		Moreover, $Y_x$ is 
		regular by construction, so $\varphi_x$ is a resolution.
		
		It remains to prove $\pi_1^{\text{\'et}}(Y_x)=1$. Fix any finite \'etale cover $\pi \colon P \to Y_x$, and let $d \colonequals \deg \pi$ the degree of $\pi$. Then, there is a corresponding Stein factorization of $\varphi_x \circ \pi$ by \cite[Tag 03H0]{Stacks}
		\begin{equation}\label{eq:Stein_fac_et_resol}
			\xymatrix{
				P \ar[r]^{\varphi_x'} \ar[d]_{\pi} & P' \ar[d]^(.46){\pi'}\\
				Y_x \ar[r]_(.3){{\varphi_x}} & \Spec \OO^h_{X,x},
			}
		\end{equation}
		where $\pi'$ is finite. By construction, $\pi \colon P \to Y_x$ is also the normalized pullback of $\pi' \colon P' \to \Spec \OO^h_{X,x}$ under the resolution $\varphi_x \colon Y_x \to \Spec \OO^h_{X,x}$ because $\varphi_x$ is birational, $\pi'$ is generically \'etale, and $\pi$ is \'etale with $P$ regular. 
		
		Note that the second paragraph of the proof of \cite[Lemma 2]{Sti17}, which relies on \cite[\S 5.7]{MO15}, can be slightly modified by using the openness of the \'etale condition (as \cite[\S 5.7]{MO15} verifies \'etaleness at a point of the base). As a result, we obtain a slight extension of \cite[Lemma 2]{Sti17}: 
		$\pi'$ is \'etale if and only if $\pi$ is \'etale and the restriction $(\varphi_x \circ \pi)^{-1}(x) \to \varphi_x^{-1}(x)$ of $\pi$ is a trivial degree $d$ cover, where $x$ is the closed point of $\Spec \OO^h_{X,x}$ (which is also geometric by the convention that base field is algebraically closed). 
		
		We claim that the above modification of \cite[Lemma 2]{Sti17} implies that $\pi'$ is \'etale. First, $\varphi_x^{-1}(x) \cong \varphi^{-1}(x)$ is algebraically simply connected by assumption, so the restriction $(\varphi_x \circ \pi)^{-1}(x) \to \varphi_x^{-1}(x)$ of $\pi$ is a trivial \'etale cover. 
		Our modification of \cite[Lemma 2]{Sti17} then implies $\pi'$ is \'etale. Since $\pi_1^{\text{\'et}}(\OO^h_{X,x})=1$ by definition of $\OO^h_{X,x} \cong \OO^{sh}_{X,x}$ (because $\kappa(x)$ is algebraically closed), $\pi \colon P' \to \Spec \OO^h_{X,x}$ is also a trivial degree $d$ cover. As $\varphi_x' \colon P \to P'$ is proper with geometrically connected fibers by the construction of the Stein factorization in \eqref{eq:Stein_fac_et_resol}, $\pi_0(P)=\pi_0(P')=d$. Since $Y_x$ is irreducible because it is birational to the irreducible $\Spec \OO^h_{X,x}$, $\pi \colon P \to Y_x$ is also a trivial degree $d$ cover by using similar arguments as for $\widehat{\pi}'$.
%
	\end{proof}

Theorem~\ref{thm:asc_fib_lasc} provides an alternative criterion to Theorem~\ref{thm:klt_implies_lasc} where one may show $X$ is locally algebraically simply connected.


	\begin{example}[{Lasc lc singularity}]
	 \label{ex:lc_K3_lasc}
		In contrast to Example~\ref{ex:lc_not_lasc_general}, we construct an example of log canonical singularity that is locally algebraically simply connected. Consider a K3 surface $E$ defined over $\mathbb{C}$, with $\varphi \colon X'_E \to X_E$ as in Example~\ref{ex:lc_not_lasc_general}. By an analogous argument, $X_E$ has log canonical singularity at the cone point $0 \in X_E$. Since $\varphi^{-1}(0) \cong E$ is the zero section of the total space $X'_E$ of line bundle $\cL^\vee$ and $\pi_1(E)=1$, Theorem~\ref{thm:asc_fib_lasc} implies that $X_E$ is locally algebraically simply connected.
	\end{example}

	Thus, Example~\ref{ex:lc_not_lasc_general} implies log canonical, elliptic, and rational singularities are not locally algebraically simply connected in general. However, some log canonical singularities are locally algebraically simply connected by Example~\ref{ex:lc_K3_lasc}.

	\section{Extension theorem for stacks} \label{sec:extension_DMstacks}
	
	
	
	The goal of this section is to state and prove the extension theorem (Theorem~\ref{thm:extension-stacks}) for separated DM stacks, which is a main technical tool to extend a birational maps. 
	It applies to DM stack that are locally algebraically simply connected, which we now define: 
	
	
	\begin{definition} \label{def:lasc_DMstack}
		Let $\cX$ be a separated normal DM stack. Then, $\cX$ is {\em locally algebraically simply connected} if there exists an \'etale cover $X \to \cX$ by a scheme such that $X$ is a locally algebraically simply connected scheme.
	\end{definition}

	The following result shows that a separated normal DM stack $\cX$ is locally algebraically simply connected if and only if \emph{every} \'etale cover is locally algebraically simply connected.
	
	\begin{lemma}\label{l:well_def_lasc_sing}
		If $f \colon Y \to X$ is an \'etale cover of a normal scheme $X$, then $X$ is locally algebraically simply connected if and only if $Y$ is locally algebraically simply connected.
	\end{lemma}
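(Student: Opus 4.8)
The plan is to reduce the equivalence to the single fact that an \'etale morphism is an isomorphism on (strict) henselizations at compatible points, so that ``lasc at $x$'' and ``lasc at $y$'' become literally the same statement whenever $f(y)=x$.

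First I would record the standing reductions. Since $f$ is \'etale it is locally of finite presentation, so $Y$ is locally of finite type over $X$, hence over the base field, whenever $X$ is; and since $X$ is normal and $f$ is \'etale, $Y$ is normal. Thus both $X$ and $Y$ satisfy the hypotheses (``normal, locally of finite type'') under which Definition~\ref{def:lasc} is meaningful. Moreover $f$, being part of an \'etale cover, is surjective and quasi-finite, hence carries closed points to closed points and has nonempty finite fibers of closed points over each closed point of $X$; because the base field is algebraically closed, every closed point of $X$ and of $Y$ has residue field $k$.

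Next comes the key input. For a closed point $y\in Y$ with $x=f(y)$, I would use that \'etale morphisms induce isomorphisms on strict henselizations, together with $\OO_{X,x}^h\cong\OO_{X,x}^{sh}$ and $\OO_{Y,y}^h\cong\OO_{Y,y}^{sh}$ (valid since $\kappa(x)=\kappa(y)=k$ is algebraically closed, exactly as used in the proof of Theorem~\ref{thm:asc_fib_lasc}), to conclude $\OO_{X,x}^h\cong\OO_{Y,y}^h$. Hence $\Spec\OO_{X,x}^h\cong\Spec\OO_{Y,y}^h$, so a resolution of one is, via this isomorphism, a resolution of the other with the same $\pi_1^{\text{\'et}}$; and by Kollár's definition this group is independent of the chosen resolution, so there is no bookkeeping to do in matching resolutions.

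Finally I would assemble the two implications. If $X$ is lasc and $y\in Y$ is a closed point, apply the lasc hypothesis at $x=f(y)$ to get a resolution $Z\to\Spec\OO_{X,x}^h$ with $\pi_1^{\text{\'et}}(Z)=1$; transporting along $\Spec\OO_{Y,y}^h\cong\Spec\OO_{X,x}^h$ yields a resolution of $\Spec\OO_{Y,y}^h$ with trivial $\pi_1^{\text{\'et}}$, so $Y$ is lasc. Conversely, if $Y$ is lasc and $x\in X$ is a closed point, surjectivity of $f$ furnishes a closed point $y\in f^{-1}(x)$, and the same transport of a resolution of $\Spec\OO_{Y,y}^h$ back to $\Spec\OO_{X,x}^h$ shows $X$ is lasc at $x$. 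As $y$ (resp.\ $x$) was arbitrary, both directions follow. I do not expect a genuine obstacle here: the only point requiring care is to work throughout with closed points on both sides and to invoke the algebraic closedness of $k$ so that henselizations may be identified with strict henselizations, which is precisely what makes the \'etale isomorphism of local rings available.
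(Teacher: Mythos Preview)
Your proposal is correct and follows essentially the same argument as the paper: both reduce to the fact that an \'etale morphism induces an isomorphism on strict henselizations, combined with $\OO^h=\OO^{sh}$ at closed points since the residue fields equal the algebraically closed base field, so that the lasc condition at $y$ and at $f(y)$ become identical. Your version is somewhat more careful in spelling out the standing hypotheses on $Y$ and the surjectivity needed for the converse direction, but the core idea is the same.
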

	\begin{proof}
	Since $X$ and $Y$ are locally finitely presented over our characteristic $0$ base field $k = \overline{k}$, all residue fields of closed points of $X$ and $Y$ are isomorphic to $k$. Thus, for any closed point $y$ of $Y$, $\OO^h_{Y,y} \cong \OO^{sh}_{Y,y}$ and $\OO^h_{X,f(y)} \cong \OO^{sh}_{X,f(y)}$. Since \'etaleness of $f$ implies that $f^* \colon \OO^h_{Y,y} \xrightarrow{\cong} \OO^h_{X,x}$, and Definition~\ref{def:lasc} only depends on spectrums of henselization of local rings of closed points, the 
	result follows.
	\end{proof}

The following example shows that the Stacky Extension Theorem (Theorem~\ref{thm:extension-stacks}) is false without some assumptions on the singularities of $\cX$.

	\begin{example}[Counter-examples without lasc condition]
 \label{ex:lasc_extension}
		In Example~\ref{ex:lc_not_lasc_general}, we exhibited two examples of affine cones $X_E$ over projective varieties $E$ defined over $\mathbb{C}$ that are not locally algebraically simply connected at the cone point $0 \in X_E$. Furthermore, we have the following commutative diagram
		\[
			\xymatrix{
				X'_{\tilde{E}} \ar[r]^{\varphi'} \ar[d]_{\pi'} & X_{\tilde{E}} \ar[d]^{\pi}\\
				X'_E \ar[r]_{\varphi}  & X_E,
			}
		\]
		where $\pi'$ is a $\mu_2$-torsor, $\pi$ is a finite degree $2$ morphism that is not \'etale precisely at $0 \in X_E$, and $\varphi,\varphi'$ are resolutions of singularities.
		
		The above diagram can be reinterpreted as a morphism $\psi_U \colon U \colonequals X_E \setminus \{0\} \to B\mu_2$ and $\psi' \colon X'_E \to B\mu_2$ that agrees over $U$ via $\varphi$. Observe that Property \ref{extension:sets} of Theorem~\ref{thm:extension-stacks} is satisfied for $\psi_U$ because $|B\mu_2|$ is a single point set corresponding to a trivial $\mu_2$-cover over the base field. To see that property \ref{extension:DVRs} of Theorem~\ref{thm:extension-stacks} is satisfied, take any $h \colon \Spec R \to X_E$ as in property \ref{extension:DVRs}. Since $\varphi$ is proper and is an isomorphism over $U$, $h$ lifts to $h' \colon \Spec R \to X'_E$ by the valuative criterion for properness. As $\pi'$ is a $\mu_2$-torsor, so is the pullback of $\pi'$ via $h'$, which corresponds to a morphism $\overline{h} \colon \Spec R \to B\mu_2$ satisfying property \ref{extension:DVRs}.
		
		If the conclusion of the theorem were true for $\psi_U$, then $\psi_U$ would extend to $\psi \colon X_E \to B\mu_2$, which is the existence of a $\mu_2$-torsor over $X_E$ that is birational to $X_{\tilde E}$. Since this $\mu_2$-torsor and $X_{\tilde E}$ are both normal and finite over $X_E$, they must be isomorphic. However, $\pi$ has generic degree 2 while the support of $\pi^{-1}(0)$ is the single cone point of $X_{\tilde E}$, which contradicts that $\pi$ must be \'etale by Lemma~\ref{l:torsor-extension-over-DVR}. Hence, there is no extension $\psi \colon X_E \to B\mu_2$ of $\psi_U \colon U \to B\mu_2$. Hence, the locally algebraically simply condition cannot be removed from Theorem~\ref{thm:extension-stacks}.
	\end{example}

	\begin{proof}[Proof of Theorem~\ref{thm:extension-stacks}]
		By \cite[Proposition A.1]{FMN10}, if $g$ exists, then it is unique up to 2-isomorphism. 
		
		For the existence of $g$, We first reduce to the case where $\cX$ is a scheme. Since $\cX$ is a stack of finite type, choose an \'etale cover $\rho\colon X\to\cX$ with $X$ of finite type, and let $U=\rho^{-1}(\cU)$. Then $X$ is normal and $U\subset X$ is open dense. We obtain an induced map $|X|\to|\cX|\to|\cY|$; furthermore, since for every DVR $R$, an $R$-point of $X$ induces an $R$-point of $\cX$, property \ref{extension:DVRs} holds for $(U,X)$. Thus, assuming the result where the source is a scheme, there exists a unique map $X\to\cY$ extending $U\to\cY$. Since $\cX$ has proper unramified diagonal, the diagonal is also finite, thus $X\times_\cX X$ is a scheme; applying the same argument to $X\times_\cX X$, and using uniqueness, we have a map $\cX\to\cY$ by descent.

		We may therefore assume $\cX=X$ is a scheme. Since $\cY$ has finite diagonal, it has a coarse space $\cY\to Y$. By \citelist{\cite{GG14}*{Theorem 7.3} \cite{AET23}*{Lemma 3.18}}, we obtain a map $X\to Y$; although the statements of the results assume $X$ is proper, the proofs work when $X$ is only assumed to be seprated and locally of finite type. Now, by the uniqueness statement, it suffices to look \'etale locally on $Y$, so we may assume $\cY=[V/G]$ with $G$ a finite group and $V$ affine. Since $X$ is separated and locally of finite type, we can look Zariski locally on $X$ again by the uniqueness statement, so assume $X$ is also affine of finite type. By property \ref{extension:DVRs} and uniqueness, we may assume $X\setminus U$ has codimension at least $2$. Then $H^0(\OO_U)=H^0(\OO_X)$ since $X$ is normal. We have a $G$-torsor $p\colon P\to U$ and a $G$-equivariant map $P\to V$. Denoting by $i \colon U \to X$ the inclusion, let $Q$ be the relative normalization of $X$ in $P$ via $i \circ p \colon P \to X$; $Q$ is also normal by \cite[Tag 035L]{Stacks}. Since $X$ is a normal affine scheme of finite type over the base field by assumption, 
		the induced map $\pi \colon Q \to X$ is finite by \cite[Tag 035B, 03GR]{Stacks}. Finiteness of $\pi$ implies that $Q$ is affine because $X$ is affine as well. Then, the definition of relative normalization as in \cite[Tag 035H]{Stacks} implies that $H^0(\OO_Q)$ is the integral closure of $H^0(\OO_U)$ in $H^0(i_*p_*\OO_P) \cong H^0(\OO_P)$. Because $p \colon P \to U$ is a finite morphism between normal schemes, the integral closure of $H^0(\OO_X) \cong H^0(\OO_U)$ in $H^0(\OO_P)$ is $H^0(\OO_P)$ by \cite[Tag 035F]{Stacks}. Therefore, $Q \cong \Spec H^0(\OO_P)$, and the $G$-action on $P$ induces a $G$-action on $Q$. Furthermore, since $V$ is affine, the map $P\to V$ extends to a $G$-equivariant map $Q\to V$. Also note that since $p \colon P \to U$ is a $G$-torsor, $H^0(\OO_Q)^G \cong H^0(\OO_P)^G \cong H^0(\OO_U) \cong H^0(\OO_X)$ and so $X \cong Q/G$.
		
		Since $X$ is normal, we may shrink $U$ without changing the codimension of $X \setminus U$ and assume $U$ is contained in the smooth locus of $X$. Choose any closed point $x$ of $X \setminus U$. By assumption on the base field, the residue field $\kappa(x)$ is algebraically closed of characteristic zero. Fix a resolution $\varphi: X' \to X$ of singularities of $X$. Observe that $U$ is an open subscheme of the smooth locus of $X$, which is of finite type over the base field, and $\pi$ restricted to $U$ is a $G$-torsor, then the relative normalization of $X'$ in $P$ via $P \xrightarrow{p} U \hookrightarrow X'$ is a finite morphism $\pi'\colon Q' \to X'$ by \cite[Tag 035B, 03GR]{Stacks}, equipped with an induced $G$-equivariant map $Q' \to V$. Since $P$ is smooth, $Q'$ is also normal by \cite[Tag 035L]{Stacks}.
		
		To see that $\pi'$ is \'etale, first choose any codimension $1$ point $x'$ of $X'$. Then, using the induced morphism $i_{D'} \colon D' \to \Spec \OO_{X',x'} \hookrightarrow X'$, consider the pullback $\pi'_{D'} \colon Q'_{D'} \rightarrow D'$ of $\pi$. Notice that property \ref{extension:DVRs} implies that there is a $G$-torsor $P_{D'} \to D'$ whose generic fiber is isomorphic to that of $\pi'_{D'}$. Thus, Corollary~\ref{cor:rel_normal_DVR} implies that $Q'_{D'} \cong P_{D'}$ so that $\pi'_{D'}$ is finite \'etale, because $\pi' \colon Q' \to X'$ is finite, $X'$ is regular, and $Q'$ is normal. Since $x'$ runs over arbitrary codimension $1$ points of $X'$, if $\pi'$ is not \'etale over $X'$ at $x'$, then it contradicts the above conclusion that $\pi'_{D'}$ is a $G$-torsor. Hence, $\pi'$ is a $G$-torsor away from codimension $2$, so $\pi'$ is \'etale by purity of the branch locus \cite[Exp. X, Theorem 3.1]{Gro63SGAI}.
		
		Now, taking the Stein factorization as in \cite[Tag 03H0]{Stacks} of $\varphi \circ \pi'$, we obtain $Q' \to Q'' \to X$ with $Q''$ the relative normalization of $X$ in $Q'$; in this case, $Q'' \cong \Spec_X (\varphi \circ \pi')_*\OO_{Q'}$. Since $Q'$ is the relative normalization of $X'$ in the normal scheme $P$, $(\pi')_*\OO_{Q'}$ is the integral closure of $\OO_{X'}$ in $(i' \circ p)_*\OO_P$, where $i' \colon U \hookrightarrow X'$ is the induced open inclusion. Because $\varphi \colon X' \to X$ is a proper morphism of normal schemes of finite type which is an isomorphism over an open dense subscheme $U$ of $X$, $\varphi_*\OO_{X'} \cong \OO_X$ by the proof of Zariski's Main Theorem as in \cite[III. Corollary 11.4]{Har77}. Hence, the integral closure of $\varphi_*\OO_{X'} \cong \OO_X$ in $(\varphi \circ i' \circ p)_*\OO_P$ is $(\varphi \circ \pi')_*\OO_{Q'}$, so $Q'' \cong \Spec_X (\varphi \circ \pi')_*\OO_{Q'}$ must be the relative normalization of $X$ in $P$ as well; then $Q \cong Q''$ because $Q$ is the relative normalization of $X$ in $P$ by above construction. Denote $\mu \colon Q' \to Q$ to be the associated Stein morphism so that $\varphi \circ \pi' = \pi \circ \mu$.
		
		Since $X$ is locally algebraically simply connected, Proposition~\ref{prop:lasc_rel_normaliz_et} applied to the resolution $\varphi \colon X' \to X$ implies that $\pi \colon Q \to X$ is also finite \'etale of degree $d$, and so $\pi' \colon Q' \to X'$ is the pullback of $\pi$ by $\varphi$. Since $\pi'$ is also a $G$-torsor, $\pi$ must be a $G$-torsor as well by Lemma~\ref{l:lasc_et}. Using the $G$-equivariant map $Q \to V$ from above, we have obtained the desired extension $g \colon X \to \cY = [V/G]$ from the map $f \colon U \to [V/G]$ corresponding to the $G$-torsor $p \colon P \to U$.
	\end{proof}

	\section{Moduli stack of stable axis-like curves} \label{sec:Smyth_moduli}
	
	\subsection{Smyth's $\cZ$-stability for curves}
	\label{subsec:Smyth_stability}
	
	In this subsection, we recall Smyth's construction of the moduli stack $\overline{\cM}_{g,n}(\cZ)$ of $\cZ$-stable curves in \cite[\S 1]{Smy13}. 
	Since we only consider the DM case, assume from now on that the pair $(g,n)$ satisfies $2g-2+n >0$ so that $\overline{\cM}_{g,n}$ is a DM stack. 
	
	\medskip
	
	Recall that an $n$-pointed curve $(C;p_1,\dotsc,p_n)$ is a curve $C$ with $n$ distinct points in the smooth locus; this convention is more restrictive than \cite[\S 1]{Smy13}. An $n$-pointed curve $(C;p_1,\dotsc,p_n)$ is {\it prestable} if for each irreducible component $C_i^\nu$ of the normalization $C^\nu$ of $C$, the number of distinguished points (which consists of the support of the marked points and the preimages of singular locus $C_{\mathrm{sing}}$ of $C$ in $C_i^\nu$) in $C_i^\nu$ is at least three if $g(C_i^\nu)=0$ and is at least one if $g(C_1^\nu)=1$. Note that if $C$ is nodal and each $p_i$ is distinct and lie in the smooth locus $C_{\mathrm{sm}}$, then $C$ being prestable is equivalent to $C$ being stable. In addition, \cite[Remark 1.4]{Smy13} implies that the automorphism group of a prestable $n$-pointed curve is always finite.
	
	To define the stability condition, we first need some notation. Let $G_{g,n}$ be the set of all stable $n$-pointed dual graphs of genus $g$ (i.e., each graph is isomorphic to a dual graph of a stable $n$-pointed curve of genus $g$); as a convention, each marked point $p_i$ of a stable $n$-pointed curve $(C;p_1,\dotsc,p_n)$ corresponds to a labelled half-edge of the corresponding dual graph $\Gamma_{(C;p_1,\dotsc,p_n)} \in G_{g,n}$. We will often abbreviate $\Gamma_{(C;p_1,\dotsc,p_n)}$ as $\Gamma_C$ if there is no confusion.
	
	Given $\Gamma_1,\Gamma_2 \in G_{g,n}$, a {\it degeneration} $\Gamma_1 \rightsquigarrow \Gamma_2$ (called {\it specialization} in \cite[\S 1]{Smy13}) is induced by a flat family $\cC \to \Spec R$ of stable $n$-pointed curves defined over the spectrum of a DVR $R$; specifically, the dual graph of the geometric generic fiber is $\Gamma_1$ and the dual graph of the geometric special fiber is $\Gamma_2$. In fact, one need only replace $\Spec R$ with a suitable finite cyclic base change (rather than pass to geometric fibers) to obtain $\Gamma_1$ (resp., $\Gamma_2$) as the dual graphs of the generic (resp., special) fiber. In this case, each vertex $v \in V(\Gamma_1)$ corresponds to an irreducible component $C_v$ of the generic fiber of $\cC \to \Spec R$. Then, $v \rightsquigarrow M_v$ means that $M_v$ is a complete subgraph of $\Gamma_2$ which is the dual graph of the limit of $C_v$ (i.e., the restriction, to the special fiber, of closure of the $C_v$ in $\cC$).
	
	Now we are ready to recall $\cZ$-stability from \cite[\S 1]{Smy13}:
	
	\begin{definition}[\cite{Smy13}*{Definition 1.5}] \label{def:extremal_assignment}
		Fix $(g,n)$ and consider an enumeration $\Gamma_1,\dotsc,\Gamma_N$ of $G_{g,n}$. Then, an assignment $\cZ$ associating to each $\Gamma_i$ a subset $\cZ(\Gamma_i) \subset V(\Gamma_i)$ is {\it extremal} if
		\begin{enumerate}
			\item for each $i$, $\cZ(\Gamma_i)$ is an $\Aut(\Gamma_i)$-invariant proper subset of $V(\Gamma_i)$, and
			\item for each degeneration $\Gamma_j \rightsquigarrow \Gamma_l$ and each $v \in V(\Gamma_j)$, we have $v \in \cZ(\Gamma_j)$ if and only if $V(M_v) \subset \cZ(\Gamma_l)$ where $v \rightsquigarrow M_v$ is induced by $\Gamma_j \rightsquigarrow \Gamma_l$.
		\end{enumerate}
		If $(C;p_1,\dotsc,p_n)$ is a stable $n$-pointed genus $g$ curve, then $\cZ(C)$ is the reduced closed subscheme of $C$, which is the union of irreducible components of $C$ corresponding to points of $\cZ(\Gamma_C)$. 
	\end{definition}

	\begin{definition}[\cite{Smy13}*{Definition 1.8}] \label{def:Z-stability}
		Given an extremal assignment $\cZ$ on $G_{g,n}$, a smoothable $n$-pointed curve $(C;p_1,\dotsc,p_n)$ of genus $g$ is $\cZ$-stable if there exists a stable curve $(C^s;p_1^s,\dotsc,p_n^s)$ and a map $\phi \colon C^s \to C$ such that
		\begin{enumerate}
			\item $\phi$ is surjective with connected fibers,
			\item $\phi$ restricted to $C^s \setminus \cZ(C^s)$ is an isomorphism onto its image, and
			\item if $Z_1,\dotsc,Z_l$ is the list of all connected components of $\cZ(C^s)$, then $\phi(Z_j) \in C$ is a singular point of genus $g_j = p_a(Z_j)$ with $m_j = \#\left(Z_j \cap \overline{C^s \setminus Z_j}\right)$ for each $j$. In this case, $\phi(Z_j) \in C_{\mathrm{sing}}$ is a singularity of type $(g_j,m_j)$, and $(Z_j,P_j \cup (Z_j \cap \overline{C^s \setminus Z_j}))$ is a stable $(m_j + n_j)$-pointed curve of genus $g_j$, where $P_j \colonequals Z_j \cap \{p_1,\dotsc,p_n\}$ is the set of marked points in $Z_j$ and $n_j \colonequals \# P_j$.
		\end{enumerate}
	\end{definition}
	
	Note that for the dual graph $\Gamma$ of a smooth $n$-pointed curve of genus $g$, $\cZ(\Gamma)=\emptyset$ for any extremal assignment $\cZ$ by Definition~\ref{def:extremal_assignment}; thus, any smooth $n$-pointed curve of genus $g$, where $(g,n) \neq (0,0),(0,1),(0,2),(1,0)$ is automatically $\cZ$-stable by Definition~\ref{def:Z-stability}. 
	Moreover, the notion of $\cZ$-stability is only defined for smoothable pointed curves. 
	Throughout the paper, we do not consider non-smoothable curves.
	
	Smyth in \cite[Theorem 1.9]{Smy13} characterized every integral proper DM compactification of $\cM_{g,n}$ (i.e, proper DM compactifications without embedded points) by using Definition~\ref{def:Z-stability}. To explain this, define $\mathcal{V}_{g,n}$ be the unique irreducible component of the algebraic stack $\mathfrak{M}_{g,n}$ (called $\mathcal{U}_{g,n}$ in \cite[Corollary B.4]{Smy13}) of all (flat families of) $n$-pointed genus $g$ curves containing $\cM_{g,n}$; by definition, $\mathcal{V}_{g,n}$ is an integral algebraic stack. This means that given a scheme $T$, $\mathcal{V}_{g,n}(T)$ is a subgroupoid of the groupoid $\mathfrak{M}_{g,n}(T)$ of flat families of $n$-pointed genus $g$ curves. When the image of $T \to \mathfrak{M}_{g,n}$ in $|\mathfrak{M}_{g,n}|$ consists of curves in $|\mathcal{V}_{g,n}|$ with reduced versal deformation spaces, then by definition, this map factors through $\mathcal{V}_{g,n}$; when $T$ is reduced, showing that the image is in $|\mathcal{V}_{g,n}|$ is enough to guarantee that the map factors through $\mathcal{V}_{g,n}$. Theorem 1.9 of \cite{Smy13} shows that the $\overline{\cM}_{g,n}(\cZ)$ as in the following definition are all of the integral proper DM compactifications of $\cM_{g,n}$.
	
	\begin{definition} \label{def:stack_Z-stable}
		Given an extremal assignment $\cZ$ of $G_{g,n}$, let $\overline{\cM}_{g,n}(\cZ)$ be the open substack of $\mathcal{V}_{g,n}$ containing $\cM_{g,n}$ as an open dense substack such that $\overline{\cM}_{g,n}(\cZ)$ is proper and $|\overline{\cM}_{g,n}(\cZ)|$ consists of $\cZ$-stable curves. In other words, $\overline{\cM}_{g,n}(\cZ)$ is an integral proper DM stack that is also the reduced substack of the moduli stack of $\cZ$-stable curves.
	\end{definition}
	
	\begin{remark} \label{rmk:Smyth_moduli}
		Given an extremal assignment $\cZ$ of $G_{g,n}$, the moduli stack $\overline{\cM}_{g,n}(\cZ)$ from Definition~\ref{def:stack_Z-stable} can be thought of as the reduction of a moduli stack $\overline{\cM}_{g,n}(\cZ)'$ parameterizing flat families of $n$-pointed $\cZ$-stable curves (so smoothable) of genus $g$; so $\overline{\cM}_{g,n}(\cZ)'$ is a substack of $\mathfrak{M}_{g,n}$.
		In particular, $|\overline{\cM}_{g,n}(\cZ)|$ coincides with $|\overline{\cM}_{g,n}(\cZ)'|$ as topological subspaces of $|\mathfrak{M}_{g,n}|$, and the structure sheaf $\mathcal{O}_{\overline{\cM}_{g,n}(\cZ)}$ on  $|\overline{\cM}_{g,n}(\cZ)|$ is the quotient of $\mathcal{O}_{\overline{\cM}_{g,n}(\cZ)'}$ by its nilradical.
		When every $\cZ$-stable curves admits integral versal deformation space, then $\overline{\cM}_{g,n}(\cZ) = \overline{\cM}_{g,n}(\cZ)'$.
	\end{remark}
	
	\subsection{Axis-like curves}
	\label{subsec:stable_axis-like}

	In this subsection, we define separating axis-like stability condition $\sF$, leading to the moduli stack $\overline{\cM}_{g,n}^{\mathrm{s-axis}} \colonequals \overline{\cM}_{g,n}(\sF)$ of stable separating axis-like curves, see Proposition \ref{prop:moduli_separating_axis-like}. Our main goal is to show there exists a representable proper birational morphism $F \colon \overline{\cM}_{g,n} \to \overline{\cM}_{g,n}(\sF)$ where we explicitly describe the contracted locus, see Proposition \ref{prop:forget_DM_stable_to_stable_axis-like}. We also prove that $F$ is a (quasi-)resolution (i.e., resolution for stacks that's representable but not necessarily schematic), see Corollary \ref{cor:moduli_FL_lasc}.

	\medskip
	
	
	
	Recall from Definition~\ref{def:axis_sing} that $m$-axis singularities locally look like the axes in $m$-space; they are called rational $m$-fold points in \cite[Definition 1.16]{Smy13}. Note that $2$-axis points are nodes, and $m$-axis points are exactly singularities of type $(0,m)$ by \cite[Lemma 1.17(a)]{Smy13}. Moreover, any $m$-axis point is smoothable by \cite[Lemma 1.17(b)]{Smy13}. This implies that a single $m$-axis point comes from contracting a subcurve $D$ of a stable curve $C^s$ such that (i) $D$ is a rational tree, and (ii) $\# (D \cap \overline{C^s \setminus D})=m$. Motivated by this observation, we define the following, which generalizes the notion of {\it rational (chain) backbones} from \cite[\S 5.A]{HM98}:
	
	\begin{definition} \label{def:ratl_bridge}
		A subcurve $D$ of a prestable $n$-pointed curve $(C;p_1,\dotsc,p_n)$ is a {\it rational $m$-bridge} if $D$ does not contain any of the marked points $p_i$, and the tuple $(D;\Supp (D \cap \overline{C \setminus D}))$ is a stable $m$-pointed curve of genus zero. Furthermore, such a $D$ is {\it separating} if the number of connected components of $\overline{C \setminus D}$ is equal to the number of points in $\Supp (D \cap \overline{C \setminus D})$.
	\end{definition}

	From now on, {\it rational multibridge} means rational $m$-bridge for some $m \ge 1$.
	
	\begin{definition} \label{def:separating_axis_sing}
		If $m \ge 3$, then an $m$-axis point $p$ of a curve $C$ is {\it separating} if the normalization of $C$ at $p$ has exactly $m$ connected components.
	\end{definition}

	The following definition is the pointed analog of Definition~\ref{def:axis_sing}, which is consistent when $n=0$:
	
	\begin{definition} \label{def:axis-like_curves}
		A prestable $n$-pointed curve $(C;\sigma_1,\dotsc,\sigma_n)$ is {\it axis-like} if each singular point $p$ of $C$ is an $m_p$-axis point for some integer $m_p \ge 2$.
	\end{definition}

	\begin{definition} \label{def:separating_axis-like}
		An $n$-pointed axis-like curve $(C;p_1,\dotsc,p_n)$ is {\it stable separating} if 
		\begin{enumerate}
			\item every $m$-axis point of $C$ is separating whenever $m \ge 3$ and avoids marked points $p_1,\dotsc,p_n$, and
			\item $C$ has no separating rational $m$-bridges for every positive integer $m$.
		\end{enumerate}
	\end{definition}

	By drawing an analogy between stable curves and their dual graphs, we define the following: given a dual graph $G \in G_{g,n}$, a {\it separating rational $m$-bridge} is a complete subtree $H \subset G$ of $m$-pointed genus zero (i.e., treat edges connecting $H$ and $G \setminus H$ as half edges of $H$, then $H \in G_{0,m}$) such that the number of connected components of $G \setminus H$ is equal to $m$. Using this, we first show that stable separating axis-like curves form a proper DM stack by finding an appropriate stability condition:
	
	\begin{lemma} \label{l:stability_separating_axis-like}
		Let $\sF$ be an assignment on $G_{g,n}$ where $\sF(\Gamma) \subset \Gamma$ is the set of separating rational $m$-bridges of $\Gamma \in G_{g,n}$ for any $m$. Then, $\sF$ is an extremal assignment.
	\end{lemma}
	
	\begin{proof}
		By definition, the set of separating rational bridges is $\Aut(\Gamma)$-invariant, which verifies the first condition of Definition~\ref{def:extremal_assignment}.
		
		It remains to check the second condition of Definition~\ref{def:extremal_assignment}. Consider any degeneration $\Gamma_0 \rightsquigarrow \Gamma_1$. 
		As noted earlier, we can assume we have a flat family $\pi \colon \cC \to \Spec R$ of stable $n$-pointed genus $g$ curves over a DVR $R$ such that the dual graph of the generic (resp., special) fiber of $\pi$ is $\Gamma_0$ (resp., $\Gamma_1$). Moreover, for any vertex $v \in V(\Gamma_0)$, the induced degeneration $v \rightsquigarrow M_v$ corresponds to the connected flat subfamily of $\pi$ whose generic fiber corresponds to the irreducible component $\cC_v$ of the generic fiber of $\pi$; in this case, the restriction of the closure $\overline{\cC_v}$ to the special fiber is the connected subcurve $C_{M_v}$ whose dual graph is $M_v$.
		
		Now take any $v \in V(\Gamma_0)$. If $v \in \sF(\Gamma_0)$, then the genus of $v$ is zero, has no half-edges, and the number $e_v \ge 3$ of edges at $v$ is equal to the number of connected components of $\Gamma_0 \setminus v$. Also, the subscheme $\cC_{\Gamma_0 \setminus v}$ corresponding to the dual graph $\Gamma_0 \setminus v$ has exactly $e_v$ connected components; as $\overline{\cC}_{\Gamma_0 \setminus v}$ is flat over $\Spec R$, the number of connected components of the corresponding special fiber $C_{\Gamma_1 \setminus M_v}$ (whose dual graph is indeed $\Gamma_1 \setminus M_v$) is $e_v$ as well. In this regard, $(\overline{\cC}_v, \overline{\cC}_v \cap \overline{\cC}_{\Gamma_0 \setminus v})$ is a flat family of stable $e_v$-pointed curves of genus zero, whose complement in $\cC$ is flat over $\Spec R$ with $e_v$ connected components, hence $(\overline{\cC}_v, \overline{\cC}_v \cap \overline{\cC}_{\Gamma_0 \setminus v})$ is a flat family of separating rational $e_v$-bridges in $\cC$. So, $M_v \subset \Gamma_1$ is also a separating rational $e_v$-bridge. As a result, $M_v \subset \sF(\Gamma_1)$.
		
		Conversely, if $M_v \subset \sF(\Gamma_1)$, then a similar argument implies that $M_v$ is a separating rational $e_v$-bridge and $(\overline{\cC}_v, \overline{\cC}_v \cap \overline{\cC}_{G_0 \setminus v})$ is a flat family of separating rational $e_v$-bridges in $\cC$, so $v$ is a separating rational $e_v$-bridge in $\Gamma_0$; this implies that $v \in \sF(\Gamma_0)$.
	\end{proof}
	
	\begin{remark}\label{rmk:deformation_space_reduced?}
		To establish the normality of the stack of stable (pointed) separating axis-like curves in the next proposition, we must consider the local properties of these curves.
		Polishchuk and Rains showed in \cite[Theorem B]{PR24} that the versal deformation space of every $m$-axis singularity ($m$-fold singularity in loc. cit.) is normal (and Cohen--Macaulay).
		Hence, the versal deformation spaces of prestable axis-like curves (e.g., stable (pointed) separating axis-like curves) are normal because they are formally smooth over the product of the versal deformation spaces of $m$-axis singularities for various values of $m$ (cf. \cite[Lemma 2.1]{Smy11b}).
	\end{remark}
	
	\begin{proposition} \label{prop:moduli_separating_axis-like}
		$\overline{\cM}_{g,n}(\sF)$ is a normal proper DM moduli stack, whose points are stable $n$-pointed separating axis-like curves of genus $g$.
	\end{proposition}

	\begin{proof}
		Since $\sF$ is an extremal assignment by Lemma~\ref{l:stability_separating_axis-like}, $\overline{\cM}_{g,n}(\sF)$ from Definition~\ref{def:stack_Z-stable} is a proper DM stack. 
		If $\overline{\cM}_{g,n}(\sF)$ is the moduli stack of stable separating axis-like curves, then $\overline{\cM}_{g,n}(\sF)$ is normal by Remark~\ref{rmk:deformation_space_reduced?}.
		So it remains to show that a prestable $n$-pointed curve $(C;p_1,\dotsc,p_n)$ is stable separating axis-like if and only if $C$ is $\sF$-stable.
		
		Suppose that $(C;p_1,\dotsc,p_n)$ is a stable separating axis-like curve defined over an algebraically closed field $k$. Since $m$-axis singularities are smoothable by \cite[Lemma 1.17(b)]{Smy13}, there exists a smoothing $\cC \to \Spec R$ of $C$ with the sections $\varphi_i \colon \Spec R \to \cC$ of $\cC/\Spec R$ that lift the marked points $p_i \in C$; in other words, $R$ is a DVR and $\cC/\Spec R$ is a flat family of prestable curves whose generic fiber is a smooth $n$-pointed curve and special fiber is isomorphic to $C$. Let $(\widetilde{\cC}/\Spec \widetilde{R};\widetilde{\varphi}_1,\dotsc,\widetilde{\varphi}_n)$ be the stable reduction of the flat family $(\cC/\Spec R; \varphi_1,\dotsc, \varphi_n)$ of prestable $n$-pointed curves
		where $\Spec \widetilde{R} \to \Spec R$ is finite generically \'etale. As $(C;p_1,\dotsc,p_n)$ is prestable by assumption, the induced rational map $\widetilde{\cC}_0 \dashrightarrow C$ of special fibers is a proper surjective morphism. In fact, this morphism $\phi \colon \widetilde{\cC}_0 \to C$ is a contraction of a pure dimension one subscheme $D \subset \widetilde{\cC}_0$. Since an $m$-axis singularity is a singularity of type $(0,m)$ by \cite[Lemma 1.17(a)]{Smy13}, $D$ is a disjoint union of rational subcurves of $\widetilde{\cC}_0$. Because the normalization of $C$ at every $m$-axis points with $m \ge 3$ is isomorphic to $\overline{\widetilde{\cC}_0 \setminus D}$ and $C$ is separating axis-like, $D$ is a disjoint union of separating rational bridges. Since $C$ does not contain any separating rational bridges by assumption, $D$ is exactly the disjoint union of all separating rational bridges of $\widetilde{\cC}_0$. Hence, $D=\sF(\widetilde{\cC}_0)$ and the contraction $\phi \colon \widetilde{\cC}_0 \to C$ of $D$ implies that $C$ is $\sF$-stable.
		
		Suppose instead that $C$ is $\sF$-stable. By Definition~\ref{def:Z-stability}, there is a surjection $\phi \colon C^s \to C$ from a stable $n$-pointed genus $g$ curve $C^s$ such that $\phi$ is the contraction of $\sF(C^s)$. Since $\sF(C^s)$ is exactly the disjoint union of all separating rational bridges of $C^s$, $C$ does not contain any separating rational bridges. Furthermore, non-nodal singularities of $C$ are of type $(0,m)$ for some $m \ge 3$ by the construction of $\phi$,
		hence an $m$-axis singularity by \cite[Lemma 1.17(a)]{Smy13}. 
		Because $C^\nu \cong \overline{C^s \setminus \sF(C^s)}$ and $\sF(C^s)$ is the disjoint union of separating rational bridges, every non-nodal singular point of $C$ is a separating $m$-axis points for some $m \ge 3$. Thus, $C$ is a stable separating axis-like curve.
	\end{proof}

	\begin{example}\label{ex:moduli_stable_sep_axis_like_genus_3}
		When $(g,n)=(3,0)$, $\sF(C)$ is nonempty for every stable curve $C$ of genus three if and only if $C$ is of the form $R \cup E_1 \cup E_2 \cup E_3$, where $R$ is a stable separating rational $3$-bridge of $C$ and $E_i$'s are elliptic tails for every $i$. This choice of extremal assignment first appeared in \cite[Example A.3]{Smy13}, where he showed that $\overline{\cM}_3(\sF)$ parameterizes stable separating axis-like curves of genus three.
	\end{example}

	Notice that $\overline{\cM}_{g,n}(\sF)$ is a proper birational model of $\overline{\cM}_{g,n}$. The geometry of $\overline{\cM}_{g,n}(\sF)$ can be described from that of $\overline{\cM}_{g,n}$ by constructing a forgetful map $F \colon \overline{\cM}_{g,n} \to \overline{\cM}_{g,n}(\sF)$ as an application of the Stacky Extension Theorem (Theorem \ref{thm:extension-stacks}):

	\begin{proposition} \label{prop:forget_DM_stable_to_stable_axis-like}
		There exists a representable proper birational morphism $F \colon \overline{\cM}_{g,n} \to \overline{\cM}_{g,n}(\sF)$, where $F(C)$ is the contraction of all separating rational bridges of a given stable $n$-pointed curve $C$ of genus $g$.
	\end{proposition}

	\begin{proof}
		Notice that the identity map on $\cM_{g,n}$ defines a rational map $F' \colon \overline{\cM}_{g,n} \dashrightarrow \overline{\cM}_{g,n}(\sF)$. Since $\overline{\cM}_{g,n}$ is a smooth proper DM stack, 
		it is also locally algebraically simply connected by Definition~\ref{def:lasc_DMstack}. Then, by Theorem~\ref{thm:extension-stacks}, well-definedness of $F$ follows from the construction of a set-theoretic map $|F| \colon |\overline{\cM}_{g,n}| \to |\overline{\cM}_{g,n}(\sF)|$ that agrees with the assertion of the Proposition and satisfying \ref{extension:DVRs} of Theorem~\ref{thm:extension-stacks}.
		
		First, consider any stable $n$-pointed curve $(C;p_1,\dotsc,p_n)$ defined over an algebraically closed field of characteristic zero. Let $Z_1,\dotsc,Z_l$ be the enumeration of every connected component of $\sF(C)$; then for each $i$, $Z_i$ is a separating rational $m_i$-bridge with $m_i \ge 3$. Since $m$-axis singularities are seminormal by \cite[Corollary 1(2)]{Dav78}, axis-like curves are characterized by their normalizations and gluing data by \cite[Tag 0EUS]{Stacks}, i.e., the local neighborhood of an $m$-axis point $t$ of $E$ is characterized by the normalization $E'$ of $E$ at $t$ and the preimage $\{s_1,\dotsc,s_m\}$ of $t$ in $E'$. Denoting 
		\[
			\{q_{i,1},\dotsc,q_{i,m_i}\} \colonequals \overline{C \setminus Z_i} \cap Z_i
		\]
		for each $i$, the previous sentence implies that the data, consisting of $\overline{C \setminus \sF(C)}$ and $\{q_{1,1},\dotsc,q_{1,m_i}\}$, $\dotsc$, $\{q_{l,1},\dotsc,q_{l,m_l}\}$, uniquely characterize a stable separating axis-like curve $C'$. Moreover, the induced map $C \to C'$ is the contraction of $\sF(C)$ which is the locus of all separating rational multi-bridges. Define $|F|(C) \colonequals C'$, which agrees with the desired assertion on the level of set-theoretic maps.
		
		To verify property \ref{extension:DVRs} of Theorem~\ref{thm:extension-stacks} for $F'$ and $|F|$, fix any DVR $R$ with maximal ideal $\fmm$ and the residue field $k$. Fix any one-parameter family $(\pi \colon \cC \to \Spec R;\sigma_1,\dotsc,\sigma_n)$ of stable $n$-pointed curves over $\Spec R$ whose generic fiber is a smooth $n$-pointed curve of genus $g$. Let $Z_1,\dotsc,Z_l$ be the enumeration of connected components of $\sF(\cC_0)$, where $\cC_0$ is the special fiber of $\pi$. Since $\cC$ is normal and $Z_i$ is a rational $m_i$-bridge with $m_i \ge 3$ for every $i$, $K_\pi+\sum \frac{m_i-2}{m_i} Z_i$ is a big and $\pi$-nef $\QQ$-Cartier divisor on $\cC$ which is $\pi$-ample away from $\sF(\cC_0)$ and trivial on $\sF(\cC_0)$. Let $\widetilde{\cC}/\Spec R$ be the relative log canonical model of $(\cC, \sum \frac{m_i-2}{m_i} Z_i)/\Spec R$. By construction, the generic fibers of $\widetilde{\cC}$ and $\cC$ are isomorphic and the induced $\Spec R$-morphism $\cC \to \widetilde{\cC}$ is exactly the contraction of $\sF(\cC_0)$. By the previous paragraph, the special fiber $\widetilde{\cC}_0$ of $\widetilde{\cC}$ must be isomorphic to $|F|(\cC_0)$. Hence, property \ref{extension:DVRs} of Theorem~\ref{thm:extension-stacks} is satisfied. Combining everything from above, well-definedness of $F$ follows from Theorem~\ref{thm:extension-stacks} applied to $F'$ and $|F|$.
		
		For properties of $F$, observe that $\overline{\cM}_{g,n}$ is a smooth proper DM stack, 
		so that it is regular. Moreover, $F$ is an isomorphism over the open locus of $\overline{\cM}_{g,n}(\sF)$ parameterizing stable nodal curves; the openness follows from the fact that nodes cannot deform to $m$-axis points with $m \ge 3$ by looking at the versal deformation spaces. Because $\overline{\cM}_{g,n}$ is proper and $\overline{\cM}_{g,n}(\sF)$ is separated by Proposition~\ref{prop:moduli_separating_axis-like}, $F$ is proper by \cite[Tag 0CPT]{Stacks}.
		
		It remains to show representability of $F$. Using the notation from the second paragraph of this proof, it suffices to show that $F$ induces an inclusion of stabilizer groups $\Aut(C) \hookrightarrow \Aut(F(C))$ for every geometric point $C$ of $\overline{\cM}_{g,n}$ by \cite[Tag 04Y5]{Stacks}. Notice that for every $g \in \Aut(C)$, $g(\sF(C))=\sF(C)$ by Definition~\ref{def:extremal_assignment}. Thus, $\Aut(C)$ is a subgroup of $G'$, where $G'$ is a subgroup of $\Aut(\overline{C \setminus \sF(C)})$ such that for every $i$, $\{q_{i,1},\dotsc,q_{i,m_i}\}$ is mapped to $\{q_{j,1},\dotsc,q_{j,m_j}\}$ for some $j$ with $m_i=m_j$. By the characterization of seminormal singularities as in \cite[Tag 0EUS]{Stacks}, 
		$G'$ is exactly $\Aut(F(C))$. Therefore, $F$ is representable.
	\end{proof}

	\begin{remark}\label{rmk:extension_universal_family}
		In fact, a small modification of the proof of Proposition~\ref{prop:forget_DM_stable_to_stable_axis-like} implies $F$ lifts to a morphism $F^u \colon \overline{\cC}_{g,n} \to \overline{\cC}_{g,n}(\sF)$ between the universal curves $u \colon \overline{\cC}_{g,n} \to \overline{\cM}_{g,n}$ and $u_{\sF} \colon \overline{\cC}_{g,n}(\sF) \to \overline{\cM}_{g,n}(\sF)$. 
		Here, $F^u$ sends any pair $((C^s;\sigma_1,\dotsc,\sigma_n),q)$ of a stable $n$-pointed curve $(C^s;\sigma_1^s,\dotsc,\sigma_n^s)$ of genus $g$ and a point $q \in C^s$ to the corresponding stable separating axis-like curve $(C;\sigma_1,\dotsc,\sigma_n)$ and a point $q' \in C$; the two curves are related by the $\sF(C^s)$-contraction $\phi \colon C^s \to C$ and $q' = \phi(q)$ as in Definition~\ref{def:Z-stability}.
		
		To see why this claim is true, let us explain the salient parts of the proof, in analogy with the proof of Proposition~\ref{prop:forget_DM_stable_to_stable_axis-like}. 
		Recall that $u \colon \overline{\cC}_{g,n} \cong \overline{\cM}_{g,n+1} \to \overline{\cM}_{g,n}$ is exactly the $(n+1)$th projection morphism: 
		given $(n+1)$-pointed stable curve $(C;\sigma_1,\dotsc,\sigma_{n+1})$ of genus $g$, forget the $(n+1)$th marked point $\sigma_{n+1}$ and then take the stable model of semistable $(C;\sigma_1,\dotsc,\sigma_n)$, which coincides with contracting the irreducible rational component containing $\sigma_{n+1}$ that has at most two other special points (if such a component exists).
		Thus, $\overline{\cC}_{g,n}$ is smooth proper and locally algebraically simply connected. 
		
		Since $u_{\sF}$ is a proper morphism and $\overline{\cC}_{g,n}(\sF)$ is a separated DM stack as the universal family of prestable curves with finite automorphism groups, $\overline{\cC}_{g,n}(\sF)$ is also an integral proper DM stack parameterizing certain $(n+1)$-pointed prestable curves of genus $g$. Therefore, $\overline{\cC}_{g,n}(\sF) \cong \overline{\cM}_{g,n+1}(u\sF)$ for some extremal assignment $u\sF$ by \cite[Theorem 1.9]{Smy13}. Note that $u$ as the $(n+1)$th projection induces the map $p_{n+1} \colon G_{g,n+1} \to G_{g,n}$ that sends $\Gamma \in G_{g,n+1}$ to $p_{n+1}(\Gamma)$, which is the result of forgetting the $(n+1)$th half-edge of $\Gamma$ and then contracting the non-stable subgraph $B_\Gamma$ of $\Gamma$ (as a semistable $n$-pointed dual graph of genus $g$) that used to contain $(n+1)$th half-edge; so $B_\Gamma$ is either empty or a one-vertex graph.
		From this description, $u\sF(\Gamma) \subset \Gamma$ is the complete subgraph of $\Gamma$ generated by the preimage of $\sF(p_{n+1}(\Gamma))$ and $B_\Gamma$; the verification of this description of $u\sF$ is similar to the proof of Proposition~\ref{prop:moduli_separating_axis-like}.
		
		From the description of $\overline{\cM}_{g,n+1}$ and $\overline{\cM}_{g,n+1}(u\sF)$, if $K/k$ is an algebraically closed field extension (where $k$ is our algebraically closed base field), then $F^u(C)$ for each $K$-curve $C$ must correspond to the contraction of the irreducible rational component corresponding to $B_{\Gamma_C}$ (if nonempty) together with any remaining separating rational multibridges. This gives the description of the set-theoretic map $|F^u| \colon |\overline{\cM}_{g,n+1}| \to |\overline{\cM}_{g,n+1}(u\sF)|$. 
		Then similar arguments from the rest of the proof of Proposition~\ref{prop:forget_DM_stable_to_stable_axis-like} imply this set-theoretic map extends to the desired lift $F^u$ of $F$ as a morphism between the universal curves over $\overline{\cM}_{g,n}$ and $\overline{\cM}_{g,n}(\sF)$.
	\end{remark}

	If $\cY$ is a locally integral and locally Noetherian algebraic stack, we say a morphism $\cX \to \cY$ 
	is a {\it resolution} if it is schematic (i.e., representable by schemes), proper, and birational with $\cX$ regular. We will also need to consider the following more general definition.
	
	\begin{definition} \label{def:quasi-resolution}
		Let $f \colon \cX \to \cY$ be a representable morphism of separated DM stacks where $\cY$ is locally integral and locally Noetherian. Then, $f$ is a {\it quasi-resolution} if $f$ is proper birational and there exists an \'etale cover $T \twoheadrightarrow \cY$ by a locally integral scheme $T$ such that the pullback $\cX \times_{\cY} T \to T$ of $f$ is a morphism of schemes that is a resolution of $T$.  
	\end{definition}

\begin{remark}
Every quasi-resolution is a proper birational representable morphism by \'etale descent. By \cite[Example 2.3]{Ful10}, there are representable morphisms that are not schematic, even if there is an \'etale cover whose pullback is schematic. 
\end{remark}


The following lemma will be useful in proving $F$ from Proposition \ref{prop:forget_DM_stable_to_stable_axis-like} is a quasi-resolution.

	\begin{lemma} \label{l:image_point}
Let $\cX$ be a separated DM stack of finite type over our characteristic $0$ algebraically closed base field $k$. If $x \in \cX(k)$ and $\cZ_x$ is the scheme-theoretic image of $x$ in $\cX$, then the induced map $p_x \colon \Spec k \to \cZ_x$ is a finite \'etale surjection.
	\end{lemma}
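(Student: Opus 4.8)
The plan is to recognize $\cZ_x$ as the residual gerbe of $\cX$ at the point $|x|$. Since $\cX$ is a separated Deligne--Mumford stack over an algebraically closed field of characteristic zero, this residual gerbe should be isomorphic to $B_k G$ for $G = \Aut_\cX(x)$ a finite group (finite because separatedness forces the inertia to be finite, and a constant étale group because $\cX$ is Deligne--Mumford and $\chr k = 0$); under this identification $p_x$ becomes the canonical map $\Spec k \to B_k G$ classifying the trivial $G$-torsor, which is finite étale surjective of degree $|G|$. Rather than developing the general theory of residual gerbes, I would prove the statement directly by pulling back along an étale chart.

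First I would note that, as $\cX$ is separated, the morphism $x\colon \Spec k \to \cX$ is proper: it is the composition of the proper morphism $\mathrm{id}\colon\Spec k\to\Spec k$ with the separated structure morphism $\cX\to\Spec k$, so the cancellation property for properness applies. In particular $x$ is quasi-compact and $|x|$ is a closed point of $|\cX|$; since $\Spec k$ is reduced, $\cZ_x\hookrightarrow\cX$ is then the unique reduced closed substack with underlying space $\{|x|\}$. Next I would choose an étale surjection $\rho\colon U\to\cX$ with $U$ an affine scheme of finite type over $k$ and set $V\colonequals\Spec k\times_\cX U$ and $Z\colonequals\cZ_x\times_\cX U$; these are schemes because the diagonal of $\cX$ is finite.

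Now for the computation. Since $\rho$ is étale and $U$ is of finite type over the algebraically closed field $k$, the scheme $V$ is finite étale over $\Spec k$, hence $V\cong\bigsqcup_{i=1}^{M}\Spec k$ with $M\ge 1$. The projection $Z\to\cZ_x$ is an étale surjection, so $Z$ is reduced; moreover $|Z|$ is the image of the finite set $|V|$ under the surjection $|V|\twoheadrightarrow|Z|$ obtained by base change, hence finite and consisting of closed points of $U$ with residue field $k$, so $Z\cong\bigsqcup_{j=1}^{N}\Spec k$ with $N\ge 1$. Because $x$ factors through $\cZ_x$, the projection $V\to U$ factors through the closed immersion $Z\hookrightarrow U$; hence $V\times_U Z\cong V$, and therefore the base change of $p_x$ along the étale surjection $Z\to\cZ_x$ is exactly the morphism $V\to Z$. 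This is a morphism between finite disjoint unions of copies of $\Spec k$, so it is finite and étale, and it is surjective because $\Spec k\to\cZ_x$ is. As finiteness, étaleness, and surjectivity are étale-local on the target, $p_x$ itself is a finite étale surjection.

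The only step that needs care is the identification of $Z$ with a finite disjoint union of reduced $k$-points: this relies on the fact that the scheme-theoretic image of a reduced scheme under a quasi-compact morphism is reduced, on $Z\to\cZ_x$ being étale (hence $Z$ reduced), and on a reduced finite-type $k$-algebra with finite spectrum being a product of copies of $k$. The remaining ingredients — the base-change identity $V\times_U Z\cong V$, which is formal once $x$ factors through $\cZ_x$ and $Z\hookrightarrow U$ is a monomorphism, and the descent of the properties finite/étale/surjective along the étale cover $Z\to\cZ_x$ — are routine.
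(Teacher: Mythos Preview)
Your proof is correct and follows essentially the same strategy as the paper: pull back along an \'etale chart $U\to\cX$, identify both $V=\Spec k\times_\cX U$ and $Z=\cZ_x\times_\cX U$ with finite disjoint unions of copies of $\Spec k$, observe that the induced map $V\to Z$ is finite \'etale surjective, and descend. The one genuine difference is in how you establish that $|\cZ_x|$ is a single closed point: the paper invokes the coarse moduli space $c\colon\cX\to X$ (via Keel--Mori) and the fact that coarse maps are proper universal homeomorphisms compatible with base change, whereas you argue directly that $x\colon\Spec k\to\cX$ is proper by the cancellation property (since $\cX\to\Spec k$ is separated and the composite is the identity), so the image $\{|x|\}$ is already closed and equals $|\cZ_x|$. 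Your route is slightly more elementary in that it avoids Keel--Mori entirely; the paper's route, on the other hand, packages the ``single point'' statement via the coarse space, which may feel more conceptual if one is already thinking in terms of coarse moduli. Either way the heart of the argument---the \'etale-local computation and descent---is identical.
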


	\begin{proof}
		Let $c \colon \cX \to X$ be the coarse map which exists by Keel--Mori.  
		Then $X$ is a finite type $k$-scheme and $c \circ x \colon \Spec k \to X$ is a proper morphism. Let $Z_x$ be the scheme-theoretic image of $c \circ x$ in $X$. Note $\cX$ has finite diagonal and it is defined over the spectrum of a characteristic zero field, so \cite[Theorem 11.3.6]{Ols16} implies the pullback $c_x \colon \cZ_x \to Z_x$ of $c$ along the closed immersion $\iota_{Z_x} \colon Z_x \hookrightarrow X$ is also the coarse map; the domain of $c_x$ being $\cZ_x$ follows from the definition of the scheme-theoretic image of a morphism of stacks (see \cite[Tag 0CMI]{Stacks}) and the fact that $c$ is a proper universal homeomorphism. Since $Z_x \cong \Spec k$ and $c_x \colon \cZ_x \to Z_x$ is also a proper universal homeomorphism as a coarse map, $|\cZ_x| \cong |Z_x| \cong |\Spec k|$ and so the induced map $p_x \colon \Spec k \to \cZ_x$ from $x \colon \Spec k \to \cX$ is a surjection.
		
		To see that $p_x$ is finite, first choose an \'etale cover $f \colon U \twoheadrightarrow \cX$ with $U$ a separated $k$-scheme of finite type; since $U$ and $\cX$ are of finite type, $f$ is also of finite type by \cite[Tag 050Y, 06U9]{Stacks}. Then, consider the following Cartesian diagram
		\[
			\xymatrix{
				T_x \ar[d]_{p'_x} \ar[r]^-{h} & \Spec k \ar[d]^{p_x}\\
				Z_x' \ar[d]_{j'_x} \ar[r]^-{g} & \cZ_x \ar[d]^{j_x}\\
				U \ar[r]_-{f} & \cX,
			}
		\]
		where $x$ is $2$-isomorphic to $j_x \circ p_x$, $j_x$ (which is the pullback of $\iota_{Z_x} \colon Z_x \hookrightarrow X$ under $c \colon \cX \to X$) and $j'_x$ are closed immersions, $p'_x$ is a surjection, and $g,h$ are \'etale surjections of finite type. Note that $k = \overline{k}$ implies that $T_x$ is isomorphic to $\sqcup_{i=1}^d \Spec k$ for some positive integer $d$. Moreover, $Z_x'$ is a closed subscheme of $U$, so that $Z'_x$ is a $k$-scheme of finite type. Since $f$ is flat and $\cZ_x$ is the scheme-theoretic image of $x \colon \Spec k \to \cX$, $Z'_x$ is also the scheme-theoretic image of $j'_x \circ p'_x \colon T_x \to U$ by \cite[Tag 0CMK]{Stacks}. Hence, $Z'_x$ is a reduced zero-dimensional scheme with a finite cover $p'_x \colon T_x \to Z'_x$; in fact, $k=\overline k$ with $T_x \cong \sqcup_{i=1}^d \Spec k$ implies that $p'_x$ is also \'etale. Then by descent, $p_x$ is a finite \'etale surjection.
	\end{proof}

	Now we are ready to prove that $F$ is a quasi-resolution, by first stating a general lemma then applying it to $F \colon \overline{\cM}_{g,n} \to \overline{\cM}_{g,n}(\sF)$ from Proposition~\ref{prop:forget_DM_stable_to_stable_axis-like}.
	
	\begin{lemma} \label{l:quasi-resolution_rel_ample}
		Let $f \colon \cX \to \cY$ be a representable proper birational morphism of finite type DM stacks, with $\cX$ smooth and separated, and $\cY$ separated and integral. Let $c_\cX \colon \cX \to X$ and $c_\cY \colon \cY \to Y$ be the coarse moduli maps, 
		and let $c(f) \colon X \to Y$ be the induced morphism. If there is a $c_f$-ample line bundle $L$ on $X$, then $f$ is a quasi-resolution.
	\end{lemma}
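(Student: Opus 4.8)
The plan is to produce an \'etale surjection $\tau\colon T\to\cY$ from a locally integral scheme $T$ of finite type over $k$ for which the base change $\cX_T\colonequals\cX\times_\cY T\to T$ is a resolution of $T$; since $f$ is already assumed representable, proper and birational, this is exactly the assertion that $f$ is a quasi-resolution. Throughout, write $c_f\colon X\to Y$ for the induced map on coarse spaces (these exist by Keel--Mori, and $X,Y$ are separated algebraic spaces of finite type over $k$); since $c_\cX\colon\cX\to X$ is proper and surjective and $c_\cY\circ f=c_f\circ c_\cX$ is proper, $c_f$ is proper, so ``$c_f$-ample'' is a meaningful hypothesis on $L$.

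The heart of the argument is the claim that $g\colonequals(c_\cX,f)\colon\cX\to X\times_Y\cY$ is finite. First, $g$ is representable: on automorphism groups of a geometric point $\xi$ the induced map $\Aut_\cX(\xi)\to\Aut_X(c_\cX(\xi))\times\Aut_\cY(f(\xi))=\Aut_\cY(f(\xi))$ is injective because $f$ is representable. Second, $g$ is proper: $f=\pr_\cY\circ g$ is proper and $\pr_\cY\colon X\times_Y\cY\to\cY$ is separated, being the base change of the separated morphism $X\to Y$, so $g$ is proper by cancellation. Third, $g$ is quasi-finite: the geometric fibre of $g$ over a point $(\xi,\upsilon)$ of $X\times_Y\cY$ has underlying set embedding into $|c_\cX^{-1}(\xi)|$, which is finite since the coarse moduli map $c_\cX$ is quasi-finite (this quasi-finiteness is exactly the content of Lemma~\ref{l:image_point} applied along $c_\cX$, or follows from $c_\cX$ inducing a homeomorphism on underlying topological spaces). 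A representable, proper, quasi-finite morphism is finite, proving the claim.

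Now choose an \'etale atlas $\tau\colon T\to\cY$ from a scheme of finite type over $k$; since $\cY$ is integral (indeed normal in the intended application $\cY=\overline{\cM}_{g,n}(\sF)^\nu$) we may take $T$ locally integral. Base-changing $g$ along $\tau$ and using $T\times_\cY(X\times_Y\cY)\cong X\times_Y T$, we obtain a finite morphism $\cX_T\to X\times_Y T$, where $\cX_T$ is an algebraic space because $f$ is representable and $T$ is a scheme. Because $L$ is $c_f$-ample, its pullback to $X\times_Y T$ is ample relative to $X\times_Y T\to T$, and its further pullback $L_T$ to $\cX_T$ along the finite morphism $\cX_T\to X\times_Y T$ is ample relative to $\cX_T\to T$; hence $\cX_T\to T$ is projective, so in particular $\cX_T$ is a scheme, proper over $T$. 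It remains to check that $\cX_T\to T$ is a resolution: it has regular source, since $\cX_T\to\cX$ is \'etale and $\cX$ is smooth over $k$, so $\cX_T$ is smooth over $k$; and it is birational on each component, because $f$ is an isomorphism over a dense open substack $\cV\subseteq\cY$, hence $\cX_T\to T$ is an isomorphism over the open set $T^\circ\colonequals\tau^{-1}(\cV)$, which meets every irreducible component of $T$ densely (the \'etale image of each such component is a dense open of the irreducible $\cY$, so it meets $\cV$), while flatness of $\cX_T\to\cX$ (base change of the \'etale $\tau$) together with integrality of $\cX$ forces every irreducible component of the reduced scheme $\cX_T$ to dominate $\cX$, and therefore to dominate an irreducible component of $T$. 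This exhibits the required cover, so $f$ is a quasi-resolution.

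The main obstacle is the finiteness of $g=(c_\cX,f)$: this is where one must combine the representability of $f$ with the quasi-finiteness of the coarse-space map $c_\cX$ (Lemma~\ref{l:image_point}). Once that is in hand, the $c_f$-ample bundle $L$ does nothing more than promote the algebraic space $\cX_T$ to a projective $T$-scheme, and the remaining points---the existence of a locally integral \'etale atlas and the flatness/density bookkeeping needed for birationality---are routine.
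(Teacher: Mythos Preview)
Your argument is correct and takes a genuinely different route from the paper. The paper verifies relative ampleness of (the pullback of) $L$ on $U=\cX_T$ \emph{fibrewise}: for each closed point $x\in T(k)$ it uses Lemma~\ref{l:image_point} to produce a finite morphism from the fibre $U_x$ to the fibre of $c_f$ over the image point in $Y$, pulls back ampleness along that finite map, and then invokes \cite[Tag 0D3D]{Stacks} to conclude that the line bundle is $f_h$-ample. You instead prove one clean global statement---that $g=(c_\cX,f)\colon\cX\to X\times_Y\cY$ is finite---and then base change to $T$ in one stroke; relative ampleness of the pullback of $L$ then follows immediately from preservation of relative ampleness under base change and finite pullback. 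Your approach is more conceptual and bypasses both Lemma~\ref{l:image_point} and the fibrewise criterion; the paper's approach is more hands-on and makes explicit the geometry of the individual fibres, which is perhaps closer in spirit to how the lemma is later applied. Two small remarks: your invocation of Lemma~\ref{l:image_point} for quasi-finiteness of $c_\cX$ is not quite on target (that lemma concerns the scheme-theoretic image of a point), but your alternative justification---that $c_\cX$ is a homeomorphism on underlying spaces---is the correct one; and the assertion that $T$ may be taken locally integral is not obviously implied by $\cY$ being merely integral (as opposed to normal), though the paper's own proof works with $T$ reduced and faces the same wrinkle vis-\`a-vis Definition~\ref{def:quasi-resolution}.
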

	
	\begin{proof}
		From the assumptions on $\cY$, there exists an \'etale surjection $h \colon T \twoheadrightarrow \cY$ with $T$ a reduced scheme of locally finite type by \cite[Tag 04YF]{Stacks}. Consider the pullback $f_h \colon U \colonequals T \times_{\cY} \cX \to T$ of $f$. As $f$ is representable, it suffices to show that $U$ is a scheme by Definition~\ref{def:quasi-resolution}. We will show that $f_h$ is a projective morphism, which implies that $U$ is a scheme.
		
		Since $T$ is locally of finite type over our algebraically closed base field $k$, there is a point $x \in T(k)$. Let $\Aut(h \circ x)$ be the stabilizer group of $h \circ x \colon \Spec k \to \cY$. Then, Lemma~\ref{l:image_point} implies $h \circ x$ decomposes into $\iota_{h \circ x} \circ p_{h \circ x}$, where $p_{h \circ x} \colon \Spec k \to \cZ_{h \circ x}$ is the finite \'etale surjection onto the scheme-theoretic image of $h \circ x$ and $\iota_{h \circ x} \colon \cZ_x \hookrightarrow \cY$ is the induced closed immersion. Consider the following commutative diagram
		\[
		\xymatrix{
			U_x \ar[r]^{p_{h \circ x}'} \ar[d]_{\pi_{h \circ x}} & U_{\cZ_x} \ar[r]^{\iota_{h \circ x}'} \ar[d]_{\pi_{h \circ x}'} & \cX \ar[r]^{c_\cX} \ar[d]^{f} & X \ar[d]^{c(f)}\\
			\Spec k \ar[r]_(.58){p_{h \circ x}} & \cZ_x \ar[r]_-(.5){\iota_{h \circ x}} & \cY \ar[r]_{c_{\cY}} & Y,
		}
		\]
		where the two leftmost squares are Cartesian; so $U_{\cZ_x} \colonequals \cZ_{h \circ x} \times_{\cY} \cX$ and $U_x \colonequals \Spec k \times_{\cZ_{h \circ x}} U_{\cZ_x}$.
		
		Consider the pullback $(c_\cX \circ \iota_{h \circ x}' \circ p_{h \circ x}')^*L$ on $U_x$. First, $c_\cX$ is proper and quasi-finite by \cite[Theorem 1.1]{Con05}. Then, $\iota_{h\circ x}$ being a closed immersion and $p_{h \circ x}$ being a finite \'etale surjection imply that $\iota'_{h \circ x}$ and $p'_{h \circ x}$ are proper quasi-finite morphisms. Thus, the composition $c_\cX \circ \iota_{h \circ x}' \circ p_{h \circ x}'$ is a proper quasi-finite morphism between schemes so it must be finite. 
		From the above commutative diagram, we have a morphism from $U_x$ to the fiber $H$ of $c(f) \colon X \to Y$ over $c_{\cY} \circ \iota_{h \circ x} \circ p_{h \circ x} \colon \Spec k \to Y$. Since $Y$ is a separated scheme of finite type over $\Spec k$, $c_{\cY} \circ \iota_{h \circ x} \to p_{h \circ x}$ is a closed immersion, so that $H$ is a closed subscheme of $X$. Thus, the canonical morphism $U_x \to H$ is also finite. Since $L$ is $c(f)$-ample, the restriction of $L$ to $H$ is ample; through the finite canonical morphism $U_X \to H$, the pullback of $L$ on $U_x$, which is isomorphic to $(c_\cX \circ \iota_{h \circ x}' \circ p_{h \circ x}')^*L$, is also ample.
		
		Since $U_x$ is isomorphic to the fiber of $f_h$ over $x \colon \Spec k \to T$ and $x \in T(k)$ is arbitrary, 
		we have shown that the line bundle $(c_\cX \circ h')^*L$ is ample when restricted to closed fibers of $f_h$, where $h' \colon U \to \cX$ is the pullback of $h$ along $f$. Thus, the line bundle $(c_\cX \circ h')^*L$ is $f_h$-ample by \cite[Tag 0D3D]{Stacks} because $U$ is an algebraic space and the scheme $T$ being locally of finite type (over $\Spec k$) implies that the set $T(k)$ of closed points of $T$ is Zariski dense. Since $f_h$ is proper, it is a projective morphism to a scheme $T$, hence $U$ is also a scheme.
	\end{proof}
	
	\begin{lemma} \label{l:quasi-resolution_moduli_stable_axis-like}
		The map $F \colon \overline{\cM}_{g,n} \to \overline{\cM}_{g,n}(\sF)$ from Proposition~\ref{prop:forget_DM_stable_to_stable_axis-like} is a quasi-resolution.
	\end{lemma}
	
	\begin{proof}
		Recall that $\overline{\cM}_{g,n}$ is a smooth proper DM stack and $\overline{\cM}_{g,n}(\sF)$ is a integral proper DM stack by Definition~\ref{def:stack_Z-stable}. By Proposition~\ref{prop:forget_DM_stable_to_stable_axis-like} and Lemma~\ref{l:quasi-resolution_rel_ample}, it remains to find a $c(F)$-ample line bundle on the projective coarse moduli space $\overline{M}_{g,n}$ of $\overline{\cM}_{g,n}$, where $c(F) \colon \overline{M}_{g,n} \to \overline{M}_{g,n}(\sF)$ is the induced morphism; since the two coarse moduli spaces are proper, $c(F)$ is a proper morphism. The projectivity of $\overline{M}_{g,n}$ implies that there is an ample line bundle $L$ on $\overline{M}_{g,n}$. Letting $k$ be the algebraically closed base field, every $k$-point of $\overline{M}_{g,n}(\sF)$ is closed, so every fiber of $c(F)$ over $k$ is a proper closed subscheme of $\overline{M}_{g,n}$ where the restriction of $L$ is ample. Then, \cite[Tag 0D3D]{Stacks} shows $L$ is a $c(F)$-ample line bundle.
	\end{proof}

	
	Before proving that $\overline{\cM}_{g,n}(\sF)$ is locally algebraically simply connected, the following proposition describes the $k$-fibers of $F \colon \overline{\cM}_{g,n} \to \overline{\cM}_{g,n}(\sF)$ 

	\begin{proposition} \label{prop:fiber_M0,mbar}
		Let $k$ be an algebraically closed field. Suppose that $C$ is a stable $n$-pointed separating axis-like curve of genus $g$ defined over $k$. Assume that $\{q_1,\dotsc,q_j\}$ is the set of non-nodal points of $C$, so that for every $i$, $q_i$ is a separating $m_i$-axis point with $m_i \ge 3$. If $x \colon \Spec k \to \overline{\cM}_{g,n}(\sF)$ is the geometric point corresponding to $C$, then
		\[
			F_x \cong \prod_{i=1}^j \overline{M}_{0,m_i},
		\]
		where $F$ is as defined in Proposition~\ref{prop:forget_DM_stable_to_stable_axis-like} and $F_x$ is the fiber of $F$ over $x$.
	\end{proposition}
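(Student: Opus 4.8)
The plan is to show that every stable curve $C^s$ with $F(C^s)\cong C$ arises from $C$ by ``un-contracting'' each separating rational $m_i$-fold point $q_i$ --- that is, replacing a neighbourhood of $q_i$ by a stable $m_i$-pointed genus zero curve glued along the $m_i$ branches of $q_i$ --- and that these choices are independent over distinct $i$. Concretely, I will construct mutually inverse morphisms $\alpha\colon \prod_{i=1}^j \overline{M}_{0,m_i}\to F_x$ and $\beta\colon F_x\to\prod_{i=1}^j\overline{M}_{0,m_i}$; since $m_i\ge 3$, each $\overline{M}_{0,m_i}$ coincides with the (fine) moduli stack $\overline{\cM}_{0,m_i}$, so this suffices. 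Write $N$ for the normalization of $C$ at $\{q_1,\dots,q_j\}$; it carries the $n$ marked points of $C$ together with, for each $i$, the $m_i$ preimages $s_{i,1},\dots,s_{i,m_i}$ of $q_i$, which lie on $m_i$ distinct connected components of $N$ because $q_i$ is separating. Fix once and for all an enumeration of these $m_i$ components; this labels the branches $s_{i,1},\dots,s_{i,m_i}$, and by the characterization of seminormal (hence fold-like) curves via their normalization and gluing data (\cite[Tag 0EUS]{Stacks}), used already in the proof of Proposition~\ref{prop:forget_DM_stable_to_stable_fold-like}, the datum of $N$ with its marked and branch points recovers $C$.

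To build $\alpha$, put $S\colonequals\prod_{i=1}^j\overline{\cM}_{0,m_i}$ and pull back to $S$ the universal curves $\overline{\cC}_{0,m_i}\to\overline{\cM}_{0,m_i}$ together with their $m_i$ tautological sections. Form $\mathcal{C}^s/S$ by gluing the constant family $N\times S$ to these pulled-back universal curves, identifying $s_{i,\ell}\times S$ with the $\ell$-th tautological section for all $i$ and $1\le\ell\le m_i$. Fibrewise one checks that $\mathcal{C}^s/S$ is a flat family of stable $n$-pointed genus $g$ curves: attaching rational trees preserves arithmetic genus and hence genus $g$, while stability holds because the inserted fibres are stable pointed rational curves and the components coming from $N$ are already stable inside $C$ (Proposition~\ref{prop:moduli_separating_fold-like}). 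Contracting the relatively contractible subfamily $\bigsqcup_i\bigl(\overline{\cC}_{0,m_i}\times_{\overline{\cM}_{0,m_i}}S\bigr)\subset\mathcal{C}^s$ over $S$ reverses the gluing and returns $C\times S$; since $C$ itself contains no separating rational multibridge and each $q_i$ is separating, these inserted trees are exactly the separating rational multibridges of each fibre, so $F(\mathcal{C}^s/S)\cong C\times S$ as families of $\sF$-stable curves. The pair $\bigl(\mathcal{C}^s/S,\ F(\mathcal{C}^s/S)\xrightarrow{\sim}C\times S\bigr)$ is, by definition of the fibre product, a morphism $\alpha\colon S\to F_x$.

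To build $\beta$, let $T$ be a scheme and $\bigl(\mathcal{C}^s/T,\ \psi\colon F(\mathcal{C}^s/T)\xrightarrow{\sim}C\times T\bigr)$ an object of $F_x(T)$. For each $i$, the section $q_i\times T$ of $C\times T$ pulls back through $\psi$ and the contraction $\mathcal{C}^s/T\to F(\mathcal{C}^s/T)$ to a closed subscheme $\mathcal{Z}_i\subset\mathcal{C}^s$; by the local description of Smyth's contractions (Definition~\ref{def:Z-stability}) together with flatness over $T$, this is a flat family of stable $m_i$-pointed genus zero curves whose marked sections $\mathcal{Z}_i\cap\overline{\mathcal{C}^s\setminus\mathcal{Z}_i}$ are labelled compatibly with the chosen enumeration of the $m_i$ branches at $q_i$. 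This gives maps $T\to\overline{\cM}_{0,m_i}$ and hence $\beta\colon F_x\to\prod_i\overline{M}_{0,m_i}$. Finally $\alpha$ and $\beta$ are mutually inverse: $\beta\circ\alpha$ recovers the inserted genus zero curves tautologically, while $\alpha\circ\beta$ reconstructs $\mathcal{C}^s/T$ from $N\times T$, the families $\mathcal{Z}_i$, and the gluing sections, which agrees with the original $\mathcal{C}^s/T$ by the seminormality characterization above.

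I expect the main obstacle to be, in the construction of $\beta$, verifying that the preimage $\mathcal{Z}_i$ of the section $q_i$ is flat over $T$ with stable genus zero fibres and that its $m_i$ attaching sections separate and can be consistently ordered over $T$; this is precisely where the separating hypothesis is essential, since it is what keeps the $m_i$ branches at $q_i$ distinguishable in every fibre and prevents the reconstructions at different $q_i$ from interfering. A secondary point requiring care is that the fibrewise contraction used in constructing $\alpha$ genuinely computes $F$ of the family $\mathcal{C}^s/S$ rather than merely of each fibre, which follows from the uniqueness of relative contractions and the description of $F$ in Proposition~\ref{prop:forget_DM_stable_to_stable_fold-like}.
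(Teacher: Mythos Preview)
Your approach is essentially the same as the paper's: both identify a $T$-point of $F_x$ with a choice, for each $q_i$, of a stable $m_i$-pointed genus zero curve inserted at $q_i$, using seminormality (\cite[Tag 0EUS]{Stacks}) and the separating hypothesis to label the branches. The paper phrases this as a bijection $F_x(T)\to\prod_i\overline{M}_{0,m_i}(T)$, while you package it as mutually inverse morphisms $\alpha,\beta$; this is only an organizational difference.

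There is one substantive point you use without justification. In constructing $\beta$ you write ``pulls back through $\psi$ and the contraction $\mathcal{C}^s/T\to F(\mathcal{C}^s/T)$,'' but $F$ as built in Proposition~\ref{prop:forget_DM_stable_to_stable_fold-like} is only a morphism of moduli stacks: applying it to a family yields another family over $T$, not a $T$-morphism between their total spaces. The paper supplies exactly this missing ingredient in Remark~\ref{rmk:extension_universal_family}, which constructs a lift $F^u\colon\overline{\cC}_{g,n}\to\overline{\cC}_{g,n}(\sF)$ of $F$ to the universal families; pulling $F^u$ back to $T$ gives the $T$-morphism $\xi\colon\mathcal{C}^s\to\hat{\mathcal{C}}$ you need to define $\mathcal{Z}_i\colonequals\xi^{-1}(\sigma_i(T))$. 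Your ``secondary point'' at the end touches on the analogous issue for $\alpha$ but does not cover this step for $\beta$. Once $F^u$ is in hand, your flatness concern for $\mathcal{Z}_i$ is resolved as in the paper: $\xi$ restricts to an isomorphism $\overline{\mathcal{C}^s\setminus\mathcal{Z}_i}\xrightarrow{\sim}(\hat{\mathcal{C}})^{\nu}_{\sigma_i(T)}$ by seminormality, and the latter is the constant family $N\times T$, so $\mathcal{Z}_i\cap\overline{\mathcal{C}^s\setminus\mathcal{Z}_i}$ is a trivial family of $m_i$ sections and $(\mathcal{Z}_i,\mathcal{Z}_i\cap\overline{\mathcal{C}^s\setminus\mathcal{Z}_i})$ is the desired flat family in $\overline{\cM}_{0,m_i}$.

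A minor further point: the paper invokes Lemma~\ref{l:quasi-resolution_moduli_stable_fold-like} to know $F_x$ is a scheme before concluding from the bijection on $T$-points. Your formulation with explicit inverses $\alpha,\beta$ can sidestep this, since $F$ representable makes $F_x$ an algebraic space and equality of morphisms of algebraic spaces can be checked on $T$-points; but you should say this.
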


	\begin{proof}
		Recall from the definition of fiber product of algebraic stacks as in \cite[Tag 003O, 04TE]{Stacks} that for any scheme $T$, any $T$-point of $F_x$ is characterized by the tuple $(\alpha, \beta, \gamma)$ where $\alpha \colon T \to \Spec k$ and $\beta \colon T \to \overline{\cM}_{g,n}$ are maps, and $\gamma \colon x \circ \alpha \to F \circ \beta$ is a $2$-isomorphism. Since $x$ corresponds to $C$, $x \circ \alpha \in \overline{\cM}_{g,n}(\sF)$ corresponds to the trivial family $\pi \colon C \times T \to T$. $\beta \in \overline{\cM}_{g,n}(T)$ corresponds to a flat family $\tau \colon \cC \to T$ of stable $n$-pointed curves and $F \circ \beta \in (\overline{\cM}_{g,n}(\sF))(T)$ corresponds to a flat family $\hat{\tau} \colon \hat{\cC} \to T$ of stable $n$-pointed separating axis-like curves. Note Remark~\ref{rmk:extension_universal_family} implies that there is a lift $F^u$ of $F$ as a morphism between universal curves over $\overline{\cM}_{g,n}$ and $\overline{\cM}_{g,n}(\sF)$, which induces a $T$-morphism $\xi \colon \cC \to \hat{\cC}$ such that for every geometric point $t$ of $T$, $\xi_t \colon \cC_t \to \hat{\cC}_t$ is the contraction of $\sF(\cC_t)$. As $\gamma \colon x \circ \alpha \to F \circ \beta$ corresponds to a $T$-isomorphism $f \colon C \times T \to \hat{\cC}$ as isotrivial families in $(\overline{\cM}_{g,n}(\sF))(T)$, we see $\xi$ is a contraction of separating rational bridges of fibers of $\tau$.
		
		Using the $T$-isomorphism $f \colon C \times T \to \hat\cC$, each $q_i$ lifts to a section $\sigma_i \colon T \to \hat{\cC}$ of $\hat{\cC}/T$, and $\hat{\cC}/T$ has seminormal singularities along $\sigma_i(T)$ by the proof of Proposition~\ref{prop:forget_DM_stable_to_stable_axis-like}. Defining $\cZ_i \colonequals \xi^{-1}(\sigma_i(T))$ for every $i$, then $\xi$ lifts to an isomorphism from $\overline{\cC \setminus \cZ_i}$ to the normalization of $\hat{\cC}$ at $\sigma_i(T)$ by \cite[Tag 0EUS]{Stacks}. Since the number of connected components of the normalization of $\hat{\cC}$ at $\sigma_i(T)$ for every $i$ is $m_i$ via the isomorphism $f \colon C \times T \to \hat\cC$, $\cZ_i \cap \overline{\cC \setminus \cZ_i}$ is a trivial family of $m_i$ distinct points over $T$; fix an ordering of those $m_i$-points by ordering connected components of the normalization of $C$ at $q_i$. Because every fiber of $\cZ_i$ is a separating rational $m_i$-bridge by the construction of $\xi$ with marked points induced by $\cZ_i \cap \overline{\cC \setminus \cZ_i}$, $(\alpha,\beta,\gamma) \in (\overline{\cM}_{g,n}(\sF))(T)$ induces flat families $(\cZ_i, \cZ_i \cap \overline{\cC \setminus \cZ_i})/T$ stable $m_i$-pointed curves of genus zero for each $i$. Denote these families by $(y_1,\dotsc,y_j) \in \prod \overline{M}_{0,m_i}(T)$. Using the definition of fiber product of algebraic stacks, it is easy to see that $(y_1,\dotsc,y_j)$ is independent of the choice of isomorphism classes of $(\alpha,\beta,\gamma)$. 
		
		
		By Lemma~\ref{l:quasi-resolution_moduli_stable_axis-like}, $x \colon \Spec k \to \overline{\cM}_{g,n}(\sF)$ factors through an \'etale cover $W \to \overline{\cM}_{g,n}(\sF)$ by a scheme $W$ and the pullback $F_W \colon W' \to W$ of $F$ along $W$ is a morphism of schemes; since $F_x$, the pullback of $F$ along $x$, is also isomorphic to the pullback of a morphism $F_W$ of schemes along a morphism $\Spec k \to W$, $F_x$ is also a scheme. Then the conclusion of the previous paragraph leads to a map $F_x(T) \to \prod \overline{M}_{0,m_i}(T)$ of sets. By construction, 
		this map is injective. To see that this map is surjective, fix arbitrary $(y_1,\dotsc,y_j) \in \prod \overline{M}_{0,m_i}(T)$. Then, there is a corresponding family of stable $n$-pointed curves of genus $g$ by gluing the families of stable pointed genus zero curves coming from $(y_1,\dotsc,y_j)$ and the normalization of $C \times T$ along trivial sections corresponding to $p_i$ for every $i$; by respecting the orders of marked points/sections, there exists $(\alpha,\beta,\gamma) \in F_x(T)$ that maps to $(y_1,\dotsc,y_j)$.
		
		Since the choice of $T$ is arbitrary, the isomorphism $F_x(T) \cong \prod \overline{M}_{0,m_i}(T)$ lifts to an isomorphism $F_x \cong \prod \overline{M}_{0,m_i}(T)$.
	\end{proof}
	
	As an application of Lemma~\ref{l:quasi-resolution_moduli_stable_axis-like}, Proposition~\ref{prop:fiber_M0,mbar}, and Theorem~\ref{thm:asc_fib_lasc}, the following corollary shows that $\overline{\cM}_{g,n}(\sF)$ is locally algebraically simply connected.
	
	\begin{corollary} \label{cor:moduli_FL_lasc}
		$\overline{\cM}_{g,n}(\sF)$ is locally algebraically simply connected. 
	\end{corollary}

	\begin{proof}

		Let $k$ be the algebraically closed base field and take any geometric point $x \colon \Spec k \to \overline{\cM}_{g,n}(\sF)$.
		Then, the fiber $F_x$ of $F$ over $x \colon \Spec k \to \overline{\cM}_{g,n}(\sF)$ is a product of moduli space of stable pointed curves of genus zero by Proposition~\ref{prop:fiber_M0,mbar}.
		Since moduli spaces of stable pointed curves of genus zero are smooth proper and rational by \cite[p. 561]{Kee92}, $F_x$ is as well.
		Then, $F_x$ is smooth, proper, and rationally connected, so that $\pi_1^{\text{\'et}}(F_x)=1$ by \cite[Proposition 2.3]{Kol00}. 
		Since $x$ is arbitrary, Theorem~\ref{thm:asc_fib_lasc} and Definition~\ref{def:lasc_DMstack} imply that $\overline{\cM}_{g,n}(\sF)$ is locally algebraically simply connected.
	\end{proof}
	
	\section{The locus of quasi-separating axis-like curves} \label{sec:qsep_axis-like}


	In this section, we define quasi-separating axis-like curves and the open substack $\cM_{g,n}(\cZ)^{\textrm{qs-axis}}$ of $\overline{\cM}_{g,n}(\cZ)$ parameterizing $\cZ$-stable quasi-separating axis-like curves, see Proposition \ref{prop:forget_stable_Z-qsep_to_Z-stable_qsep_axis-like}. We will see in Theorem~\ref{thm:extended_Torelli_Z-stable} that $\cM_{g,n}(\cZ)^{\textrm{qs-axis}}$ is a locus in $\overline{\cM}_{g,n}(\cZ)$ where the Torelli map $t_{g,n} \colon \cM_{g,n} \to \cA_g$ extends. Also see Example~\ref{ex:why_quasi-separating_axis-like} for why Torelli map does not extend over some non-quasi-separating axis-like curves. 
	
	\begin{definition} \label{def:qsep_ratl_bridge}
		A rational $m$-bridge $D$ of an $n$-pointed curve $(C;p_1,\dotsc,p_n)$ is {\it quasi-separating} if 
		\begin{enumerate}
			\item there is at most one connected component $C_0$ of $\overline{C \setminus D}$ with $\#\Supp (C_0 \cap D)>1$, and
			\item $\#\Supp (C_0 \cap D) \le 3$.
		\end{enumerate}
	\end{definition}

	\begin{definition} \label{def:qsep_axis}
		An $m$-axis point $p$ of an $n$-pointed curve $(C;p_1,\dotsc,p_n)$ with $m \ge 3$ is {\it quasi-separating} if $p \neq p_i$ for every $i$ and the normalization $\nu_p \colon C' \to C$ of $C$ at $p$ has
		\begin{enumerate}
			\item at most one connected component $C_0$ of $C'$ such that $\#\Supp (C_0 \cap \nu_p^{-1}(p)) > 1$, and
			\item $\#\Supp (C_0 \cap \nu_p^{-1}(p)) \le 3$.
		\end{enumerate}
	\end{definition}

	Now we are ready to define quasi-separating axis-like curves:
	
	\begin{definition} \label{def:quasi-separating_axis-like}
		An $n$-pointed axis-like curve $(C;p_1,\dotsc,p_n)$ is {\it quasi-separating} if every $m$-axis point of $C$ is quasi-separating whenever $m \ge 3$ and avoids marked points $p_1,\dotsc,p_n$ of $C$.
	\end{definition}

	The stable curves associated with the definition of $\cZ$-stable quasi-separating axis-like curves are characterized as follows:
	
	\begin{definition} \label{def:stable_Z-quasi-separating_axis-like}
		Let $\cZ$ be an extremal assignment on $G_{g,n}$. Then, a stable $n$-pointed curve $(C;p_1,\dotsc,p_n)$ of genus $g$ is {\it $\cZ$-quasi-separating} if $\cZ(C)$ is a (possibly empty) finite disjoint union of quasi-separating rational multibridges.
	\end{definition}
	
	When $\cZ = \sF$, any stable $n$-pointed curve is $\sF$-quasi-separating by Lemma~\ref{l:stability_separating_axis-like}. By Proposition~\ref{prop:forget_DM_stable_to_stable_axis-like}, $\sF$-stable quasi-separating axis-like curves (which are exactly stable separating axis-like curves) come from stable $\sF$-quasi-separating curves. There is an analogous result relating stable $\cZ$-quasi-separating curves and $\cZ$-stable quasi-separating axis-like curves:
	
	\begin{lemma} \label{l:contract_Z-qs}
		Let $\cZ$ be an extremal assignment on $G_{g,n}$. Let $(C;p_1,\dotsc,p_n)$ be a $\cZ$-stable $n$-pointed genus $g$ quasi-separating axis-like curve. If $(C^s;p_1^s,\dotsc,p_n^s)$ is a stable $n$-pointed  genus $g$ curve and $\phi \colon C^s \to C$ is as in Definition~\ref{def:Z-stability}, then $C^s$ is a $\cZ$-quasi-separating curve.
	\end{lemma}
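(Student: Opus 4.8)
The plan is to treat each connected component of $\cZ(C^s)$ in isolation: $\phi$ contracts such a component to one of the rational fold points of the fold-like curve $C$, and the quasi-separating hypothesis on that singular point pulls back to the assertion that the component is a quasi-separating rational multibridge. Write $\cZ(C^s)=Z_1\sqcup\dots\sqcup Z_l$ for the connected components and put $q_j\colonequals\phi(Z_j)$. First I would establish the basic shape of each $Z_j$. By the third condition of Definition~\ref{def:Z-stability}, $q_j$ is a singular point of $C$ of type $(g_j,m_j)$ with $g_j=p_a(Z_j)$ and $m_j=\#(Z_j\cap\overline{C^s\setminus Z_j})$, and $(Z_j,P_j\cup(Z_j\cap\overline{C^s\setminus Z_j}))$ is a stable $(m_j+n_j)$-pointed curve of genus $g_j$. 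Since $C$ is fold-like, $q_j$ is a rational $m_j$-fold point, hence of type $(0,m_j)$ by \cite[Lemma 1.17(a)]{Smy13}, so $g_j=0$; and since $q_j$ is singular it is not a marked point of $C$, so $Z_j$ contains no $p_i^s$ (otherwise $p_i=\phi(p_i^s)=q_j$ would be singular, contradicting our convention that marked points lie in the smooth locus), whence $n_j=0$ and $P_j=\emptyset$. Genus-$0$ stability then forces $m_j\ge3$, so $Z_j$ is a rational $m_j$-bridge of $C^s$ in the sense of Definition~\ref{def:ratl_bridge}. Note also that distinct components map to distinct singular points $q_j$, by the connected-fibre condition.

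The key step is to identify the normalization of $C$ at $q_j$. I would show that $\phi$ restricts to a finite birational morphism $\overline{C^s\setminus Z_j}\to C$ which is an isomorphism away from $\{q_1,\dots,q_l\}$, contracts each $Z_{j'}$ ($j'\ne j$) to $q_{j'}$, and has exactly the $m_j$ points of $\Supp(Z_j\cap\overline{C^s\setminus Z_j})$ lying over $q_j$; since each of those is a node of $C^s$ with a single branch on $\overline{C^s\setminus Z_j}$, the curve $\overline{C^s\setminus Z_j}$ is smooth there. Consequently this morphism factors as $\overline{C^s\setminus Z_j}\to C'_{(j)}\xrightarrow{\nu_j}C$, where $\nu_j$ is the normalization of $C$ at the single point $q_j$ and the first map merely contracts the connected subcurves $Z_{j'}$, $j'\ne j$. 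This is the place to invoke the description of the seminormal — hence fold-like — curve $C$ near $q_j$ by its normalization together with gluing data, cf.\ \cite[Tag 0EUS]{Stacks}, exactly as in the proof of Proposition~\ref{prop:forget_DM_stable_to_stable_fold-like}. Because contracting connected subcurves induces a bijection on connected components, the components of $C'_{(j)}$ correspond to those of $\overline{C^s\setminus Z_j}$, and under this correspondence $\Supp(C_0'\cap\nu_j^{-1}(q_j))$ matches $\Supp(C_0\cap Z_j)$ for the paired components $C_0'\leftrightarrow C_0$.

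Finally I would assemble the pieces. As $C$ is a quasi-separating fold-like curve and $q_j$ is a rational $m_j$-fold point with $m_j\ge3$ avoiding the marked points, Definition~\ref{def:quasi-separating_fold-like} together with Definition~\ref{def:qsep_fold} says that at most one connected component $C_0'$ of $C'_{(j)}$ has $\#\Supp(C_0'\cap\nu_j^{-1}(q_j))>1$, and that for this component the count is at most $3$. Transporting this along the bijection above, at most one connected component $C_0$ of $\overline{C^s\setminus Z_j}$ has $\#\Supp(C_0\cap Z_j)>1$, with the count $\le3$; combined with the fact that $Z_j$ is a rational $m_j$-bridge, this is exactly the statement that $Z_j$ is a quasi-separating rational $m_j$-bridge (Definition~\ref{def:qsep_ratl_bridge}). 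Since $j$ was arbitrary, $\cZ(C^s)=Z_1\sqcup\dots\sqcup Z_l$ is a finite disjoint union of quasi-separating rational multibridges, i.e.\ $C^s$ is $\cZ$-quasi-separating.

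I expect the main obstacle to be the normalization identification in the second paragraph: one must check carefully that contracting the connected rational subcurve $Z_j$, which meets the rest of $C^s$ in $m_j$ nodes, produces precisely a rational $m_j$-fold point whose branch set is the attaching set $Z_j\cap\overline{C^s\setminus Z_j}$, and that this is compatible with the further contractions performed by $\phi$. The seminormality of fold-like curves and the gluing-data description of \cite[Tag 0EUS]{Stacks} make this clean, but the bookkeeping of connected components — which is all that the quasi-separating condition actually sees — needs to be carried out with some care.
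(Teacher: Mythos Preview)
Your argument is correct and follows essentially the same route as the paper's proof: identify each connected component $Z_j$ of $\cZ(C^s)$ with a rational $m_j$-bridge via \cite[Lemma~1.17(a)]{Smy13}, factor $\phi|_{\overline{C^s\setminus Z_j}}$ through the normalization of $C$ at $q_j$ using seminormality of fold singularities, and transport the quasi-separating branch conditions along the resulting bijection of connected components. One small slip: the restriction $\overline{C^s\setminus Z_j}\to C$ is not \emph{finite}, since it still contracts each $Z_{j'}$ with $j'\ne j$; but this is harmless, as you immediately note the contractions and only use that the map is an isomorphism near the fibre over $q_j$.
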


	\begin{proof}
		If $q$ is any non-nodal singular point of $C$, then $q \not \in \{p_1,\dotsc,p_n\}$ and $q$ is a quasi-separating $m$-axis point of $C$ by Definition~\ref{def:quasi-separating_axis-like}. By \cite[Lemma 1.17(a)]{Smy13}, it is also a singularity of type $(0,m)$ with $m \ge 3$, so the corresponding connected component $Z = \phi^{-1}(q)$ of $\cZ(C^s)$ is a rational $m$-bridge. 
		
		It suffices to show that $Z$ is a quasi-separating rational $m$-bridge. Let $B_1,\dotsc,B_l$ be the enumeration of the connected components of $\overline{C^s \setminus Z}$, and let $P_i \colonequals \Supp(B_i \cap Z)$ for every $i$. Since $q$ is a seminormal singular point by \cite[Corollary 1(2)]{Dav78}, the restriction of $\phi$ to $\overline{C^s \setminus Z}$ factors through the normalization of $C$ at $q$ by \cite[Tag 0EUS]{Stacks}. Letting $\nu_q \colon C' \to C$ be the normalization of $C$ at $q$, the induced map $\psi_q \colon \overline{C^s \setminus Z} \to C'$ (so that $\nu_q \circ \psi = \phi$ on $\overline{C^s \setminus Z}$) is an isomorphism on an open neighborhood of $\sqcup P_i \subset \overline{C^s \setminus Z}$ and $\psi_q$ induces bijections on connected components of $\overline{C^s \setminus Z}$ and $C'$; so define $B_i'$ to be the connected component of $C'$ corresponding to $B_i$ via $\psi_q$. Then $q$ being a quasi-separating $m$-axis point implies that there is at most one $j \in \{1,\dotsc,l\}$ such that $\# (B_j' \cap \nu_q^{-1}(q))$ is equal to $2$ or $3$ and for any $i \neq j$, $\# (B_i' \cap \nu_q^{-1}(q)) = 1$. Using the properties of $\psi_q$ mentioned before, this condition on $B_i \cap \nu_q^{-1}(q)$ implies that $\# P_j$ is equal to $2$ or $3$ if such $j$ exists, and $\# P_i = 1$ for any $i \neq j$. This condition coincides with $Z$ being a quasi-separating rational $m$-bridge.
	\end{proof}

	This suggests a set-theoretic correspondence between stable $\cZ$-quasi-separating curves and $\cZ$-stable quasi-separating axis-like curves, analogous to $|F| \colon |\overline{\cM}_{g,n}| \to |\overline{\cM}_{g,n}(\sF)|$. We construct an analogue $F_{\cZ} \colon \cM_{g,n}^{\cZ-\mathrm{qs}} \to \cM_{g,n}(\cZ)^{\textrm{qs-axis}}$ of $F \colon \overline{\cM}_{g,n} \to \overline{\cM}_{g,n}(\sF)$ through a series of steps, including Lemma~\ref{l:stable_Z-qs-axis} and Proposition~\ref{prop:forget_stable_Z-qsep_to_Z-stable_qsep_axis-like} that define $\cM_{g,n}^{\cZ-\mathrm{qs}}$ and $\cM_{g,n}(\cZ)^{\textrm{qs-axis}}$.

	\begin{lemma} \label{l:stable_Z-qs-axis}
		Given an extremal assignment $\cZ$ on $G_{g,n}$, the locus in $\overline{\cM}_{g,n}$, of $\cZ$-quasi-separating curves, forms an open substack $\cM_{g,n}^{\cZ-\mathrm{qs}}$ of $\overline{\cM}_{g,n}$, which is a separated smooth DM stack of finite type.
	\end{lemma}

	\begin{proof}
		Let $Y$ be a subset of $|\overline{\cM}_{g,n}|$ consisting of stable $\cZ$-quasi-separating $n$-pointed curves, and let $X$ be the complement of $Y$ in $|\overline{\cM}_{g,n}|$. By the definition of topology on $|\overline{\cM}_{g,n}|$ in \cite[(5.5)]{LMB00}, the existence of an open substack $\cM_{g,n}^{\cZ-\mathrm{qs}}$ of $\overline{\cM}_{g,n}$ follows from the openness of $Y$. It suffices to show $X$ is closed since $\cM_{g,n}^{\cZ-\mathrm{qs}}$ is an open substack of the smooth proper DM stack $\overline{\cM}_{g,n}$.
		
		Closedness of $X$ follows from constructibility of $X$ and stability of $X$ under specializations by \cite[\S 5]{HR17} (particularly the discussion right after Theorem 5.1 of loc. cit.). To see that $X$ is constructible, note \cite[\S 2]{Cap20} implies that the separated DM stack $\overline{\cM}_{g,n}$ admits a locally closed stratification by $\cM_\Gamma$, where $\Gamma$ is any element of the finite set $G_{g,n}$ and $|\cM_\Gamma|$ consists of stable curves whose dual graphs are isomorphic to $\Gamma$.\footnote{Although \cite[\S 2]{Cap20} uses the language of schemes, the construction applies to the corresponding stacks as well} By Definition~\ref{def:stable_Z-quasi-separating_axis-like}, the property that a curve is (not) $\cZ$-quasi-separating only depends on the dual graph $\Gamma_C$ of $C$. As a result, $X$ is a finite union of strata $|\cM_\Gamma|$ where $\cZ(\Gamma)$ is nonempty and not a finite disjoint union of quasi-separating rational multibridges; so $X$ is indeed constructible.
		
		To check that $X$ is stable under specializations, consider any DVR $R$ and a morphism $f \colon \Spec R \to \overline{\cM}_{g,n}$; this corresponds to a flat family $\pi \colon \cC \to \Spec R$ of stable $n$-pointed curves of genus $g$, where the families of points are characterized by sections $\varphi_i \colon \Spec R \to \cC$ of $\pi$ for all $i=1,\dotsc,n$. As in \S~\ref{subsec:Smyth_stability}, assume (up to finite cyclic base change on $\Spec \widetilde{R}$) that the dual graph $\Gamma_1$ (resp., $\Gamma_2$) of the generic (resp., special) fiber of $\pi$ coincides with that of the geometric generic (resp., special) fiber. Then $\pi$ corresponds to a degeneration $\Gamma_1 \rightsquigarrow \Gamma_2$. 
		
		Assume that $\cZ(\Gamma_1)$ is nonempty and not a finite disjoint union of quasi-separating rational multibridges (so that $|f| \colon |\Spec R| \to |\overline{\cM}_{g,n}|$ sends the generic point of $\Spec R$ to $X$). Then, it remains to show that $\cZ(\Gamma_2)$ is nonempty and not a finite disjoint union of quasi-separating rational multibridges (so that $|f|$ sends the special point of $\Spec R$ to $X$). Observe that the assumption on $\cZ(\Gamma_1)$ implies that there exists a connected component $Z$ of $\cZ(\Gamma_1)$ that is not a quasi-separating rational $m$-bridge for any $m \ge 3$. By Definition~\ref{def:Z-stability} and Definition~\ref{def:quasi-separating_axis-like}, it remains to show that $\cZ(\Gamma_2)$ also contains a connected component that is not a quasi-separating rational $m$-bridge for any $m \ge 3$.
		
		If the genus $g(Z)$ is larger than zero, then $\Gamma_1 \rightsquigarrow \Gamma_2$ implies that there is an induced degeneration $Z \rightsquigarrow Z_0$ with $Z_0 \subset \cZ(\Gamma_2)$ by Definition~\ref{def:extremal_assignment}. Since $Z$ is connected, so is $Z_0$, which implies that $Z_0$ is contained in a connected component $Z_0'$ of $\cZ(\Gamma_2)$. Because $0 < g(Z) = g(Z_0) \le g(Z_0')$, $Z_0'$ is not a rational multibridge.
		
		The remaining case is when $g(Z)=0$ but $Z \subset \Gamma_1$ is not a quasi-separating rational multibridge. If $Z$ has a half-edge that corresponds to some half-edge $p_s$ of $\Gamma_1$, then $Z_0$ also has the same half-edge $p_s$ of $\Gamma_2$. Since $Z_0 \subset Z'_0$, $Z'_0$ is a connected subgraph with a half-edge corresponding to $p_s$, $Z_0'$ is not a quasi-separating rational multibridge so we are done.
		
		Otherwise, assume that $g(Z)=0$, $Z \subset \Gamma_1$ is not a quasi-separating rational multibridge, and $Z$ does not contain any of the half-edges correpsonding to half-edges of $\Gamma_1$. Then, this further breaks into the two cases, where either there exists a connected component $H$ of $\Gamma_1 \setminus Z$ that meets $Z$ at $m_0 \ge 4$ number of edges, or there exists two distinct connected components $H,H'$ of $\Gamma_1 \setminus Z$ that meets $Z$ at $m_0, m_0' \ge 2$ number of edges respectively. Before checking each case, define $Z_0$ to be the subgraph of $\Gamma_2$ induced by degeneration of $Z$ and $Z_0'$ to be the unique connected component of $\cZ(\Gamma_2)$ containing $Z_0$. Assume that $g(Z_0')=0$, otherwise $Z_0'$ is not a quasi-separating rational multibridge and we are done.
		
		In the first case, let $H_0$ to be the subgraph of $\Gamma_2$ induced by degeneration of $H$; then there are $m_0 \ge 4$ number of edges connecting $H_0$ and $Z_0$. If $Z_0' \supset H_0$, then $Z_0' \supset Z_0 \cup H_0$, so that $Z_0 \cup H_0$ is a degeneration of $Z \cup H$ with $g(Z_0') \ge g(Z_0 \cup H_0) = g(Z \cup H) = g(Z) + g(H) + (m_0-1) \ge 3$, contradicting $g(Z_0')=0$. Thus, $H_0 \setminus Z_0'$ is nonempty, and $Z_0 \cup (H_0 \cap Z_0')$ is connected because $Z_0'$ is connected and $H_0$ is the connected component of $\Gamma_2 \setminus Z_0$. Let $H_{0,1},\dotsc,H_{0,l}$ be the enumeration of connected components of $H_0 \setminus Z_0'$, and define $e_i \ge 1$ be the number of edges connecting $H_{0,i}$ and $(Z_0 \cup H_0) \setminus H_{0,i}$. Then 
		\[
			g(Z_0 \cup H_0) = g(Z_0) + g(H_0) + (m_0 - 1) = g(H_0) + (m_0 -1) 
		\]
		by similar arguments from before, while
		\begin{equation} \label{eq:genus_graph_computation}
			g(Z_0 \cup H_0) = g(Z_0 \cup (H_0 \cap Z_0')) + \sum_{i=1}^l (g(H_{0,i}) + e_i -1) = \sum_{i=1}^l g(H_{0,i}) + \sum_{i=1}^l (e_i-1)
		\end{equation}
		because $Z_0 \cup (H_0 \cap Z_0') \subset Z_0'$ are connected curves with $g(Z_0') = 0$. Since $\sum g(H_{0,i}) \le g(H_0)$ and $g(Z_0 \cup H_0) = g(H_0) + (m_0 - 1)$ by above, equation~\eqref{eq:genus_graph_computation} implies that $3 \le m_0 -1 \le \sum (e_i - 1)$, so either there exists one $i$ such that $e_i \ge 4$, or there exists two distinct $i,j$ with $e_i, e_j \ge 2$. Then either there exists $i$ with $H_{0,i}$ meeting $Z_0'$ along $e_i \ge 4$ edges or there exists two distinct $i,j$ such that $H_{0,i}$ and $H_{0,j}$ meet $Z_0'$ along $e_i,e_j \ge 2$ edges, so that $Z_0'$ is not quasi-separating rational multibridge by Definition~\ref{def:qsep_ratl_bridge}; this proves the assertion in the first case.
		
		On the other hand, consider the second case. Define $Z_0, Z_0'$ as before, and let $H_0, H_0'$ be the subgraph of $\Gamma_2$ induced by degenerations of $H, H'$ respectively. Similar to the first case, assume that $g(Z_0')=0$. Since $g(Z_0 \cup H_0) = g(Z \cup H) = g(Z) + g(H) + (m_0 - 1) \ge 1$, $Z_0'$ does not contain $H_0$ because $g(Z_0') = 0$; similarly, $Z_0'$ does not contain $H_0'$. Let $H_{0,1},\dotsc,H_{0,l}$ (resp., $H_{0,1}',\dotsc,H_{0,l'}'$) be the enumeration of connected components of $H_0 \setminus Z_0'$ (resp., $H_0' \setminus Z_0'$), and define $e_i \ge 1$ (resp., $e_i' \ge 1$) be the number of edges connecting $H_{0,i}$ (resp., $H_{0,i}'$) and $(Z_0 \cup H_0) \setminus H_{0,i}$ (resp., $(Z_0 \cup H_0') \setminus H_{0,i}'$). Then, a similar argument as in the first case above (involving equations analogous to equation~\eqref{eq:genus_graph_computation}) implies that $1 \le (m_0 - 1) \le \sum (e_i-1)$ and similarly $1 \le (m_0' - 1) \le \sum (e_i' -1)$, which implies that there exists $i \in \{1,\dotsc,l\}$ and $j \in \{1,\dotsc,l'\}$ such that $e_i, e_j \ge 2$. Then $H_{0,i}$ and $H_{0,i'}$ meet $Z_0'$ along $e_i,e_j \ge 2$ edges, so that $Z_0'$ is not quasi-separating rational multibridge by Definition~\ref{def:qsep_ratl_bridge}.
	\end{proof}

	$\cM_{g,n}^{\cZ-\mathrm{qs}}$ allows us to construct an analog of $F \colon \overline{\cM}_{g,n} \to \overline{\cM}_{g,n}(\sF)$ from Proposition~\ref{prop:forget_DM_stable_to_stable_axis-like}:

	\begin{proposition} \label{prop:forget_stable_Z-qsep_to_Z-stable}
		Let $\cZ$ be an extremal assignment of $G_{g,n}$. Then, there exists a representable separated birational morphism $\overline{F}_{\cZ} \colon \cM_{g,n}^{\cZ-\mathrm{qs}} \to \overline{\cM}_{g,n}(\cZ)$ of finite type, where $\overline{F}_{\cZ}(C)$ is the contraction of $\cZ(C)$ into axis-like singularities for any curve $C$ in $\cM_{g,n}^{\cZ-\mathrm{qs}}$.
	\end{proposition}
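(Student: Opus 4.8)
The plan is to adapt the proof of Proposition~\ref{prop:forget_DM_stable_to_stable_fold-like}. The identity on $\cM_{g,n}$ defines a rational map $\overline{F}_{\cZ}'\colon \cM_{g,n}^{\cZ-\mathrm{qs}}\dashrightarrow \overline{\cM}_{g,n}(\cZ)$, and by Lemma~\ref{l:stable_Z-qs-fold} the source is a smooth, separated, finite-type DM stack, hence locally algebraically simply connected by Definition~\ref{def:lasc_DMstack} (as any smooth scheme is). Since $\overline{\cM}_{g,n}(\cZ)$ is a proper DM stack by \cite[Theorem 1.9]{Smy13}, the existence of an extension $\overline{F}_{\cZ}$ will follow from the Stacky Extension Theorem (Theorem~\ref{thm:extension-stacks}) once we (i) produce a set map $|\overline{F}_{\cZ}|\colon|\cM_{g,n}^{\cZ-\mathrm{qs}}|\to|\overline{\cM}_{g,n}(\cZ)|$ realizing the asserted contraction and (ii) verify the DVR extension property~\ref{extension:DVRs}.

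For (i), let $(C;p_1,\dots,p_n)$ be a stable $\cZ$-quasi-separating curve over an algebraically closed field. By Definition~\ref{def:stable_Z-quasi-separating_fold-like}, $\cZ(C)=Z_1\sqcup\dots\sqcup Z_l$ with each $Z_i$ a quasi-separating rational $m_i$-bridge, so $m_i\ge3$. Contracting each $Z_i$ to a point yields a curve $C'$ and a morphism $\phi\colon C\to C'$ as in Definition~\ref{def:Z-stability}; thus $C'$ is $\cZ$-stable, and each $\phi(Z_i)$ is a singularity of type $(0,m_i)$, hence a rational $m_i$-fold point by \cite[Lemma 1.17(a)]{Smy13}. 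Since rational fold points are seminormal \cite[Corollary 1(2)]{Dav78}, the curve $C'$ is determined, as a $\cZ$-stable curve, by $\overline{C\setminus\cZ(C)}$ together with the attaching data $Z_i\cap\overline{C\setminus Z_i}$ via \cite[Tag 0EUS]{Stacks}, exactly as in the proof of Proposition~\ref{prop:forget_DM_stable_to_stable_fold-like}. We set $|\overline{F}_{\cZ}|(C):=C'$, which lies in $|\overline{\cM}_{g,n}(\cZ)|$ by \cite[Theorem 1.9]{Smy13}, and this assignment agrees with the assertion of the proposition.

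For (ii), fix a DVR $R$ with residue field $k$ and a one-parameter family $\pi\colon\cC\to\Spec R$ of stable $n$-pointed curves with smooth generic fiber coming from a map $\Spec R\to\cM_{g,n}^{\cZ-\mathrm{qs}}$, and write $\cZ(\cC_0)=Z_1\sqcup\dots\sqcup Z_l$ on the special fiber. As the generic fiber is smooth, $\cC$ is normal; since each $Z_i$ is a rational $m_i$-bridge with $m_i\ge3$, the $\QQ$-Cartier divisor $K_\pi+\sum_i\frac{m_i-2}{m_i}Z_i$ is big and $\pi$-nef, $\pi$-ample away from $\cZ(\cC_0)$ and $\pi$-trivial on $\cZ(\cC_0)$. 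Its relative log canonical model $\widetilde{\cC}/\Spec R$ therefore contracts exactly $\cZ(\cC_0)$, has generic fiber that of $\cC$, and has special fiber $|\overline{F}_{\cZ}|(\cC_0)$ by (i). Being flat over $\Spec R$ with $\cZ$-stable fibers and having its generic point in $\cM_{g,n}\subset\mathcal{V}_{g,n}$, which is closed under specialization as an irreducible component of the stack of all $n$-pointed genus $g$ curves, the family $\widetilde{\cC}/\Spec R$ defines a point of $\overline{\cM}_{g,n}(\cZ)$, providing the required lift $\overline{h}$. Theorem~\ref{thm:extension-stacks} now yields $\overline{F}_{\cZ}$; it is birational since it is the identity on the dense open $\cM_{g,n}$, and it is separated and of finite type because $\cM_{g,n}^{\cZ-\mathrm{qs}}$ and $\overline{\cM}_{g,n}(\cZ)$ are separated and of finite type over the base field.

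Finally, for representability we check by \cite[Tag 04Y5]{Stacks} that $\Aut(C)\to\Aut(\overline{F}_{\cZ}(C))$ is injective for every geometric point $C$. Since $\cZ(C)$ is $\Aut(C)$-invariant, any $\varphi\in\Aut(C)$ descends to $\overline{F}_{\cZ}(C)$; if that descent is trivial then $\varphi$ restricts to the identity on $\overline{C\setminus\cZ(C)}$, hence fixes every point of each $Z_i\cap\overline{C\setminus Z_i}$, so $\varphi|_{Z_i}$ is an automorphism of the stable pointed genus-zero curve $(Z_i;Z_i\cap\overline{C\setminus Z_i})$ fixing all its marked points and is therefore the identity; thus $\varphi=\id$. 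The step demanding the most care is the DVR extension property~(ii) — confirming that the relative log canonical model is the correct $\cZ$-stable family over $\Spec R$ and genuinely lands in $\overline{\cM}_{g,n}(\cZ)$ — but this proceeds exactly as in the separating case of Proposition~\ref{prop:forget_DM_stable_to_stable_fold-like}, the only change being that the bridges $Z_i$ are merely quasi-separating, which affects neither the weights $\frac{m_i-2}{m_i}$ nor the contraction computation.
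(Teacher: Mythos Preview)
Your proof is correct and follows essentially the same approach as the paper's: both apply the Stacky Extension Theorem to the rational map induced by the identity on $\cM_{g,n}$, using that $\cM_{g,n}^{\cZ-\mathrm{qs}}$ is smooth (hence lasc), and both defer the construction of $|\overline{F}_{\cZ}|$ and the DVR-extension argument to the template of Proposition~\ref{prop:forget_DM_stable_to_stable_fold-like}. The paper's proof is in fact terser---it simply says these steps ``follow from similar arguments'' in that proposition---whereas you spell out the seminormality/gluing-data description of the contracted curve, the log canonical model construction over the DVR, and the injectivity of $\Aut(C)\to\Aut(\overline{F}_{\cZ}(C))$ explicitly; your justification of separatedness and finite type via cancellation is also equivalent to the paper's citation of \cite[Tags 050N, 06U9, 050Y]{Stacks}.
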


	\begin{proof}
		Notice that the identity map on $\cM_{g,n}$ defines a rational map $F'_{\cZ} \colon \overline{\cM}_{g,n} \dashrightarrow \overline{\cM}_{g,n}(\cZ)$, because $\cZ(C) = \emptyset$ for any smooth $n$-pointed curve $(C;p_1,\dotsc,p_n) \in |\cM_{g,n}|$. Observe that Lemma~\ref{l:stable_Z-qs-axis} implies that $\cM_{g,n}^{\cZ-\mathrm{qs}}$ is a separated smooth DM stack of finite type, so that it is also locally algebraically simply connected by Definition~\ref{def:lasc} and Definition~\ref{def:lasc_DMstack}. We apply Theorem~\ref{thm:extension-stacks} to construct $\overline{F}_{\cZ}$.
The construction of $|\overline{F}_{\cZ}|$ and verification of \ref{extension:DVRs} of Theorem~\ref{thm:extension-stacks} follows from similar arguments in the proof of Proposition~\ref{prop:forget_DM_stable_to_stable_axis-like}. 
		
		For the properties of $\overline{F}_{\cZ}$, notice that the representability of $\overline{F}_\cZ$ is analogous to the proof of Proposition~\ref{prop:forget_DM_stable_to_stable_axis-like}. Because $\overline{\cM}_{g,n}(\cZ)$ is a proper DM stack by Definition~\ref{def:stack_Z-stable}, the diagonal $\Delta_{\overline{\cM}_{g,n}(\cZ)/\Spec k}$ of $\overline{\cM}_{g,n}(\cZ)$ is proper as well, where $k$ is the algebraically closed base field of characteristic zero. Then, separatedness of  $\overline{F}_\cZ$ follows from properness of $\Delta_{\overline{\cM}_{g,n}(\cZ)/\Spec k}$ and separatedness of $\cM_{g,n}^{\cZ-\mathrm{qs}}$ by \cite[Tag 050N]{Stacks}. Also, \cite[Tag 06U9, 050Y]{Stacks} imply that $\overline{F}_{\cZ}$ is of finite type because $\cM_{g,n}^{\cZ-\mathrm{qs}}$ and $\overline{\cM}_{g,n}(\cZ)$ are of finite type over the base scheme $\Spec k$.
	\end{proof}

	Note that when $\cZ = \sF$, then $\cM_{g,n}^{\cZ-\mathrm{qs}} \cong \overline{\cM}_{g,n}$ and $\overline{F}_{\sF} = F$ from Proposition~\ref{prop:forget_DM_stable_to_stable_axis-like}. However, $\overline{F}_\cZ$ is not proper whenever there exists a $\cZ$-stable curve that is not quasi-separating axis-like. For this exact reason, we need to characterize the image of $\overline{F}_\cZ$:

	\begin{proposition} \label{prop:forget_stable_Z-qsep_to_Z-stable_qsep_axis-like}
		Given an extremal assignment $\cZ$ of $G_{g,n}$, there is an open substack $\cM_{g,n}(\cZ)^{\textrm{qs-axis}}$ of $\overline{\cM}_{g,n}(\cZ)$ where $|\cM_{g,n}(\cZ)^{\textrm{qs-axis}}|$ coincides with the locus in $|\overline{\cM}_{g,n}(\cZ)|$ consisting of quasi-separating axis-like curves. Moreover, there is a quasi-resolution 
		\[
			F_{\cZ} \colon \cM_{g,n}^{\cZ-\mathrm{qs}} \to \cM_{g,n}(\cZ)^{\textrm{qs-axis}},
		\]
		such that $\overline{F}_{\cZ}$ is $2$-isomorphic to $\iota_{\cZ} \circ F_{\cZ}$, where $\iota_{\cZ} \colon \cM_{g,n}(\cZ)^{\textrm{qs-axis}} \to \overline{\cM}_{g,n}(\cZ)$ is the open immersion.
	\end{proposition}

	\begin{proof}
		Let $U$ be the image of $|\cM_{g,n}^{\cZ-\mathrm{qs}}|$ in $|\overline{\cM}_{g,n}(\cZ)|$ by $|\overline{F}_\cZ|$. Since $\overline{\cM}_{g,n}(\cZ)$ is an integral proper DM stack by Definition~\ref{def:stack_Z-stable}, there is an \'etale surjection $h \colon T \twoheadrightarrow \overline{\cM}_{g,n}(\cZ)$ where $T$ is a locally integral (i.e., reduced and locally irreducible) scheme of locally finite type by \cite[Tag 04YF]{Stacks}. Defining $U_T \colonequals |h|^{-1}(U) \subset |T|$, openness of $U$ is equivalent to the openness of $U_T$. To see that $U_T$ is an open subset of $|T|$, it suffices to show that for every open irreducible subscheme $S$ of $T$ that is also of finite type, the set $U_S \colonequals U_T \cap |S|$ is an open subset of $|T|$. Let $\nu_S \colon S^\nu \to S$ be the normalization of $S$ and let $U_{S^\nu} \colonequals |\nu_S|^{-1}(U_S) \subset |S^\nu|$. The assumptions on $S$ imply that $\nu$ is finite by \cite[Tag 035S]{Stacks}, so openness of $U_S$ follows from openness of $U_{S^\nu}$. 
		
		To see that $U_{S^\nu}$ is open in $|S^\nu|$, let $h_S \colon S^\nu \to \overline{\cM}_{g,n}(\cZ)$ be the composition of $\nu$, the open immersion $S \hookrightarrow T$, and $h$. Note that $h_S$ corresponds to a flat family $(\pi \colon \cC \to S^\nu; \sigma_1,\dotsc,\sigma_n)$ of $n$-pointed $\cZ$-stable curves of genus $g$ (here, each $\sigma_i \colon S^\nu \to \cC$ is a section of $\pi$) which are prestable by Definition~\ref{def:Z-stability}. Hence, \cite[Lemma 2.2]{Smy13} implies that there exists an alteration $\rho \colon \widetilde{S} \to S^\nu$ and a birational $\widetilde{S}$-morphism $\phi \colon \cC^s \to \widetilde{\cC}$ from a flat family $(\pi^s \colon \cC^s \to \widetilde{S}; \sigma_1^s,\dotsc,\sigma_n^s)$ of stable $n$-pointed curves of genus $g$ to the pullback $(\widetilde{\pi} \colon \widetilde{\cC} \to \widetilde{S}; \widetilde{\sigma}_1,\dotsc,\widetilde{\sigma}_n)$ of $(\pi \colon \cC \to S^\nu; \sigma_1,\dotsc,\sigma_n)$ (here, $\phi \circ \sigma_i^s = \widetilde{\sigma}_i$ for every $i$). By \cite[Corollary 2.10]{Smy13}, the pullback of $\phi$ to any geometric point $x$ of $\widetilde{S}$ induces a contraction $\phi_x \colon \cC^s_x \to \widetilde{\cC}_x$ of $\cZ(\cC^s_x)$ as in Definition~\ref{def:Z-stability}. Then Lemma~\ref{l:contract_Z-qs} and Lemma~\ref{l:stable_Z-qs-axis} imply that the preimage $U_{\widetilde{S}}$ of $U_{S^\nu}$ via $|\rho| \colon |\widetilde{S}| \to |S^\nu|$ is an open subset of $|\widetilde{S}|$; so properness of alteration $\rho$ implies that $U_{S^\nu}$ is open $|\widetilde{S}|$.
		
		The above arguments imply that $U \subset |\overline{\cM}_{g,n}(\cZ)|$ is open, so let $\cM_{g,n}(\cZ)^{\textrm{qs-axis}}$ be the open substack of $\overline{\cM}_{g,n}(\cZ)$ such that $|\cM_{g,n}(\cZ)^{\textrm{qs-axis}}| = U$ in $|\overline{\cM}_{g,n}(\cZ)|$, and let $\iota_\cZ \colon \cM_{g,n}(\cZ)^{\textrm{qs-axis}} \to \overline{\cM}_{g,n}(\cZ)$ be the induced open immersion. By Lemma~\ref{l:contract_Z-qs}, $\cM_{g,n}(\cZ)^{\textrm{qs-axis}}$ is the reduced substack of the moduli stack of $n$-pointed $\cZ$-stable quasi-separating axis-like curves of genus $g$, and the representable morphism $\overline{F}_{\cZ} \colon \cM_{g,n}^{\cZ-\mathrm{qs}} \to \overline{\cM}_{g,n}(\cZ)$ decomposes into $\iota_\cZ \circ F_\cZ$, where the induced $F_{\cZ} \colon \cM_{g,n}^{\cZ-\mathrm{qs}} \to \cM_{g,n}(\cZ)^{\textrm{qs-axis}}$ is a representable birational surjection. 
		
		For the properties of $F_\cZ$, notice that $\iota_{\cZ}$ is an open immersion, which is also an affine morphism; so the diagonal $\Delta_{\iota_{\cZ}}$ of $\iota_{\cZ}$ is proper. This and Lemma~\ref{l:stable_Z-qs-axis} imply that $F_\cZ$ is separated by \cite[Tag 050M]{Stacks}. Similarly, Lemma~\ref{l:stable_Z-qs-axis} and \cite[Tag 06U9, 050Y]{Stacks} imply that $F_\cZ$ is of finite type.
		
		To see that $F_\cZ$ is proper, it remains to check the valuative criteria of properness (see \cite[Proposition 7.12]{LMB00}). Fix a DVR $R$ and consider a flat family $(\pi_R \colon \cC_R \to \Spec R; \sigma_1,\dotsc,\sigma_n)$ of $n$-pointed $\cZ$-stable curves of genus $g$, where $\pi$ is a $\cZ$-stable quasi-separating axis-like curves. Denote $K \colonequals \mathrm{Frac}(R)$ so that $\eta \colon \Spec K \to \Spec R$ is the generic point. Assume that there is a contraction $\phi_K \colon \cC_K^s \to \cC_K$ of $\cZ(\cC_K)$, where $(\cC^s_K;\sigma_1',\dotsc,\sigma_n')$ is a stable $\cZ$-quasi-separating curve. Using \cite[Lemma 2.2]{Smy13} again, there is an alteration $\rho_R \colon S \to \Spec R$ and a birational $S$-morphism $\phi_R \colon \cC_R^s \to \widetilde{\cC_R}$ from a flat family $(\pi_R^s \colon \cC_R^s \to S; \sigma_1^s,\dotsc,\sigma_n^s)$ of stable $n$-pointed curves of genus $g$ to the pullback $(\widetilde{\pi}_R \colon \widetilde{\cC}_R \to \widetilde{S}; \widetilde{\sigma}_1,\dotsc,\widetilde{\sigma}_n)$ of $(\pi_R \colon \cC_R \to \Spec R; \sigma_1,\dotsc,\sigma_n)$; here, $\widehat{\pi}_R$ is a flat family of $\cZ$-stable quasi-separating axis-like curves. Let $\rho_K \colon S_\eta \to \Spec K$ be the generic fiber of $\rho$. By \cite[Corollary 2.10]{Smy13}, the restriction of $\phi_R$ to $S_\eta$ is isomorphic to the pullback of $\phi_K$ along $\rho_K$. Note that Lemma~\ref{l:stable_Z-qs-axis} implies that the $\cZ$-quasi-separating locus of $\pi_R^s$ in $S$ is $S$ itself. Since $\rho_R \colon S \to \Spec R$ is an alteration, it is a proper generically finite \'etale surjection with $S$ irreducible; this immediately implies that $\rho_K \colon S_\eta \to \Spec K$ is a finite \'etale morphism, so $S_\eta \cong \Spec L$ for some finite field extension $L/K$. Then, properness of $\rho_R$ implies that there is a DVR $R'$ with $L \cong \mathrm{Frac}(R')$ and a morphism $u \colon \Spec R' \to S$ such that $\rho_R \circ u \colon \Spec R' \to \Spec R$ is a generically finite \'etale surjection that sends the special point of $\Spec R'$ to that of $\Spec R$. Then the pullback of $\phi_R$ induces a birational morphism from a flat family of stable $\cZ$-quasi-separating curves over $\Spec R'$, whose generic fiber is isomorphic to the pullback of $\cC_K^s$ along $\rho_K$, to a flat family of $\cZ$-stable quasi-separating axis-like curves over $\Spec R'$, which is isomorphic to the pullback of $\pi_R$. This verifies the valuative criteria of properness on $F_\cZ$, hence $F_\cZ$ is proper.
		
		It remains to show that $F_\cZ$ is a quasi-resolution. Let $c(F_\cZ) \colon M_{g,n}^{\cZ-\mathrm{qs}} \to M_{g,n}(\cZ)^{\textrm{qs-axis}}$ be the induced morphism between coarse moduli spaces from the morphism $F_\cZ$ between open substacks of proper DM stacks. Observe that $M_{g,n}^{\cZ-\mathrm{qs}}$ (resp., $M_{g,n}(\cZ)^{\textrm{qs-axis}}$) as an open subspace of the coarse moduli space $\overline{M}_{g,n}$ (resp., $\overline{M}_{g,n}(\cZ)$) of a proper DM stack $\overline{\cM}_{g,n}$ (resp., $\overline{\cM}_{g,n}(\cZ)$) is separated of finite type. Also, properness of $F_\cZ$ and the coarse moduli maps imply that $c(F_\cZ)$ is also a proper morphism of algebraic spaces by \cite[Tag 0CQK]{Stacks}. Then consider the restriction $L$ on $M_{g,n}^{\cZ-\mathrm{qs}}$ of an ample line bundle on the projective scheme $\overline{M}_{g,n}$. Then properness of $c(F_\cZ)$ with $M_{g,n}(\cZ)^{\textrm{qs-axis}}$ being an algebraic space of finite type imply that $L$ is also $c(F_\cZ)$-ample by \cite[Tag 0D3D]{Stacks}. Therefore, existence of this $L$ implies that the representable proper birational morphism $F_\cZ$ is indeed a quasi-resolution by Lemma~\ref{l:quasi-resolution_rel_ample}.
	\end{proof}

	Just like $F \colon \overline{\cM}_{g,n} \to \overline{\cM}_{g,n}(\sF)$, geometric fibers of $F_{\cZ} \colon \cM_{g,n}^{\cZ-\mathrm{qs}} \to \cM_{g,n}(\cZ)^{\textrm{qs-axis}}$ from Proposition~\ref{prop:forget_stable_Z-qsep_to_Z-stable_qsep_axis-like} are products of moduli spaces of stable pointed curves of genus zero:
	
	\begin{proposition} \label{prop:fiber_M0,mbar_qsep}
		Let $k$ be an algebraically closed field, and $\cZ$ be an extremal assignment of $G_{g,n}$. Suppose that $C$ is a $\cZ$-stable quasi-separating axis-like curve defined over $k$. Assume that $\{q_1,\dotsc,q_j\}$ is the set of non-nodal points of $C$, so that for every $i$, $q_i$ is a quasi-separating $m_i$-axis point with $m_i \ge 3$. Then, if $x \colon \Spec k \to \overline{\cM}_{g,n}(\sF)$ is the geometric point corresponding to $C$, then
		\[
		(F_\cZ)_x \cong \prod_{i=1}^j \overline{M}_{0,m_i},
		\]
		where $F_\cZ$ is as defined in Proposition~\ref{prop:forget_stable_Z-qsep_to_Z-stable_qsep_axis-like} and $(F_\cZ)_x$ is the fiber of $F_\cZ$ over $x$.
	\end{proposition}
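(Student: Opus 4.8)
The plan is to adapt the proof of Proposition~\ref{prop:fiber_M0,mbar} essentially verbatim, with $F\colon\overline{\cM}_{g,n}\to\overline{\cM}_{g,n}(\sF)$ replaced by $F_\cZ\colon\cM_{g,n}^{\cZ-\mathrm{qs}}\to\cM_{g,n}(\cZ)^{\textrm{qs-fold}}$, the assignment $\sF$ by $\cZ$, and ``separating rational bridge'' by ``quasi-separating rational multibridge'' throughout. First I would unwind the definition of the $2$-fibre product: a $T$-point of $(F_\cZ)_x$ is a triple $(\alpha,\beta,\gamma)$ with $\alpha\colon T\to\Spec k$, with $\beta$ a flat family $\tau\colon\cC\to T$ of $\cZ$-quasi-separating stable $n$-pointed genus $g$ curves (an object of $\cM_{g,n}^{\cZ-\mathrm{qs}}(T)$ by Lemma~\ref{l:stable_Z-qs-fold}), and with $\gamma$ an isomorphism $f\colon C\times T\xrightarrow{\ \simeq\ }\widehat\cC$, where $\widehat\cC$ is the family $F_\cZ(\cC)$ of $\cZ$-stable quasi-separating fold-like curves obtained by contracting $\cZ(\cC_t)$ in each geometric fibre; as in Remark~\ref{rmk:extension_universal_family} there is an induced $T$-morphism $\xi\colon\cC\to\widehat\cC$ which fibrewise is this contraction.

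Next I would extract from such a triple a $T$-point of $\prod_{i=1}^j\overline{M}_{0,m_i}$. For each non-nodal point $q_i$ of $C$ let $\sigma_i\colon T\to C\times T$ be the constant section at $q_i$ and set $\cZ_i\colonequals\xi^{-1}(\sigma_i(T))$, viewing $\xi$ as landing in $C\times T$ via $f$. By Lemma~\ref{l:contract_Z-qs} each $\cZ_i$ is a flat family of quasi-separating rational $m_i$-bridges; since $q_i$ is seminormal by \cite[Corollary 1(2)]{Dav78}, the restriction of $\xi$ to $\overline{\cC\setminus\bigsqcup_i\cZ_i}$ factors through the normalisation of $C\times T$ along $\bigsqcup_i\sigma_i(T)$ by \cite[Tag 0EUS]{Stacks}, and the latter is the trivial family $C'\times T$, where $\nu\colon C'\to C$ is the normalisation of $C$ at $q_1,\dotsc,q_j$. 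Consequently the attaching locus $\cZ_i\cap\overline{\cC\setminus\cZ_i}$ is, after fixing once and for all an ordering of $\nu^{-1}(q_i)$, a trivial family of $m_i$ disjoint sections of $\cZ_i/T$; hence $(\cZ_i,\,\cZ_i\cap\overline{\cC\setminus\cZ_i})/T$ is a flat family of stable $m_i$-pointed genus zero curves, i.e.\ a point $y_i\in\overline{M}_{0,m_i}(T)$. One checks exactly as in Proposition~\ref{prop:fiber_M0,mbar} that $(y_1,\dotsc,y_j)$ depends only on the isomorphism class of $(\alpha,\beta,\gamma)$ and is functorial in $T$.

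Then I would show the resulting natural transformation $(F_\cZ)_x(T)\to\prod_i\overline{M}_{0,m_i}(T)$ is bijective. Injectivity: $\cC$ with its marked points, and hence $(\alpha,\beta,\gamma)$, is reconstructed from $C'\times T$, the families $\cZ_i$, and the ordered sections along which they are glued. Surjectivity: given $(y_1,\dotsc,y_j)$, glue the corresponding families of stable $m_i$-pointed genus zero curves onto $C'\times T$ along the prescribed ordered sections; the result $\cC\to T$ is a flat family of stable $n$-pointed genus $g$ curves whose $\cZ$-locus in each fibre is exactly $\bigsqcup_i\cZ_i$ and is a disjoint union of quasi-separating rational multibridges --- this is where the argument genuinely departs from the separating case and uses that $\cZ$ is extremal, via the same dual-graph bookkeeping (degenerating the bridges $\cZ_i$ internally while fixing the ambient attaching pattern) as in the proof of Lemma~\ref{l:stable_Z-qs-fold}. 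Thus $\cC\to T$ lies in $\cM_{g,n}^{\cZ-\mathrm{qs}}(T)$, and contracting the $\cZ_i$ recovers $C\times T$, giving a $T$-point of $(F_\cZ)_x$ mapping to $(y_1,\dotsc,y_j)$. Finally, $F_\cZ$ is a quasi-resolution by Proposition~\ref{prop:forget_stable_Z-qsep_to_Z-stable_qsep_fold-like}, so $x$ factors through an \'etale cover of $\cM_{g,n}(\cZ)^{\textrm{qs-fold}}$ over which $F_\cZ$ pulls back to a morphism of schemes; hence $(F_\cZ)_x$ is a scheme, and Yoneda promotes the functorial bijection to the desired isomorphism $(F_\cZ)_x\cong\prod_{i=1}^j\overline{M}_{0,m_i}$.

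The main obstacle is precisely the surjectivity/gluing step flagged above: unlike for separating fold singularities, where each of the $m_i$ branches of $\cZ_i$ meets a distinct connected component of $C'$, here one component $C_0\subset C'$ may meet $\cZ_i$ at two or three points, so one must verify with care that gluing the universal genus zero families back in along such (possibly coincident-component) sections yields a genuinely \emph{stable} curve whose $\cZ$-locus is the glued-in bridges, and that this persists in families --- i.e.\ that the construction is compatible with the universal-family lift of $F_\cZ$. Once this local-to-global gluing analysis is in place, the seminormality characterisation \cite[Tag 0EUS]{Stacks} of fold-like curves makes the rest of the argument formally identical to the separating case.
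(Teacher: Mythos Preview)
Your approach is essentially the same as the paper's: both reduce to the argument of Proposition~\ref{prop:fiber_M0,mbar} after establishing the requisite inputs for $F_\cZ$ in place of $F$. The difference is one of emphasis. The paper identifies the \emph{universal-family lift} $F^u_\cZ$ of $F_\cZ$ (the $\cZ$-analogue of Remark~\ref{rmk:extension_universal_family}) as the main adaptation and sketches its construction via an auxiliary extremal assignment $u\cZ$ on $G_{g,n+1}$; you invoke this lift with the phrase ``as in Remark~\ref{rmk:extension_universal_family}'' without noting that Remark~\ref{rmk:extension_universal_family} only treats $\cZ=\sF$, so your write-up should make explicit that an analogous construction is required here. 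Conversely, the obstacle you flag---verifying, using extremality of $\cZ$, that the glued curve lies in $\cM_{g,n}^{\cZ-\mathrm{qs}}$ with $\cZ$-locus exactly the glued-in bridges---is a genuine point the paper passes over in its ``the arguments apply'' sentence; your observation that it follows from Definition~\ref{def:extremal_assignment}(ii) (degenerating the bridge internally while fixing the ambient attaching pattern) is correct and would strengthen the exposition. Both ingredients are needed, and both are routine once identified.
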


	\begin{proof}
		The arguments in the proof of Proposition~\ref{prop:fiber_M0,mbar} apply here as well, under the assumption that there are analogues of Proposition~\ref{prop:forget_DM_stable_to_stable_axis-like}, Remark~\ref{rmk:extension_universal_family}, and Lemma~\ref{l:quasi-resolution_moduli_stable_axis-like} for $F_\cZ$. Proposition~\ref{prop:forget_stable_Z-qsep_to_Z-stable_qsep_axis-like} is the analogue of Proposition~\ref{prop:forget_DM_stable_to_stable_axis-like} and Lemma~\ref{l:quasi-resolution_moduli_stable_axis-like}.
		
		It remains to explain the salient parts of the analogue of Remark~\ref{rmk:extension_universal_family}. Let $u_{\cZ} \colon \overline{\cC}_{g,n}(\cZ) \to \overline{\cM}_{g,n}(\cZ)$ be the universal family of stable $\cZ$-quasi-separating curves. Similar arguments as in Remark~\ref{rmk:extension_universal_family} imply there is an extremal assignment $u\cZ$ such that the $\overline{\cC}_{g,n}(\cZ) \cong \overline{\cM}_{g,n+1}(u\cZ)$. Using the $(n+1)$th projection $p_{n+1} \colon G_{g,n+1} \to G_{g,n}$ described in the proof of Remark~\ref{rmk:extension_universal_family}, for any $\Gamma \in G_{g,n+1}$, $u\cZ(\Gamma) \subset \Gamma$ is the complete subgraph of $\Gamma$ generated by the preimage of $\cZ(p_{n+1}(\Gamma))$ and a complete subgraph $B_\Gamma$ of $\Gamma$ (which is either empty or a one-vertex graph with exactly three half-edges, one of which is the $(n+1)$th marked half-edge of $\Gamma$).
		
		Using the openness of $\cM_{g,n}^{\cZ-\mathrm{qs}}$ and $\cM_{g,n}(\cZ)^{\textrm{qs-axis}}$ in $\overline{\cM}_{g,n}$ and $\overline{\cM}_{g,n}(\cZ)$ respectively (see Lemma~\ref{l:stable_Z-qs-axis} and Proposition~\ref{prop:forget_stable_Z-qsep_to_Z-stable_qsep_axis-like}), let $\cM_{g,n+1}^{u\cZ-u\mathrm{qs}}$ and $\cM_{g,n+1}(u\cZ)^{u\mathrm{qs-axis}}$ be the open substacks of the universal families $\overline{\cM}_{g,n+1}$ and $\overline{\cM}_{g,n+1}(u\cZ)$ that correspond to the preimages of $\cM_{g,n}^{\cZ-\mathrm{qs}}$ and $\cM_{g,n}(\cZ)^{\textrm{qs-axis}}$ respectively. Then similar arguments as in Remark~\ref{rmk:extension_universal_family} imply there is the desired lift $F^u_{\cZ} \colon \cM_{g,n+1}^{u\cZ-u\mathrm{qs}} \to \cM_{g,n+1}(u\cZ)^{u\mathrm{qs-axis}}$ of $F_{\cZ} \colon \cM_{g,n}^{\cZ-\mathrm{qs}} \to \cM_{g,n}(\cZ)^{\textrm{qs-axis}}$.
	\end{proof}

	We end this section by proving:
	\begin{corollary} \label{cor:moduli_Z-stable_qs_axis-like_lasc}
		$\cM_{g,n}(\cZ)^{\textrm{qs-axis}}$ is locally algebraically simply connected (so normal as well). 
	\end{corollary}

	\begin{proof}
		Consider $F_\cZ \colon \cM_{g,n}^{\cZ-\mathrm{qs}} \to \cM_{g,n}(\cZ)^{\textrm{qs-axis}}$ from Proposition~\ref{prop:forget_stable_Z-qsep_to_Z-stable_qsep_axis-like}, which is a quasi-resolution. 
		By Remark~\ref{rmk:deformation_space_reduced?}, $\cM_{g,n}(\cZ)^{\textrm{qs-axis}}$ is normal.
		Letting $k$ be the algebraically closed base field, Proposition~\ref{prop:fiber_M0,mbar_qsep} and the argument from the proof of Corollary~\ref{cor:moduli_FL_lasc} imply that the fiber of $F_\cZ$ over any $k$-point of $\cM_{g,n}(\cZ)^{\textrm{qs-axis}}$ is a smooth proper rational scheme (in fact, also a product of moduli spaces of stable pointed curves of genus zero), hence has trivial \'etale fundamental group by \cite[Proposition 2.3]{Kol00}. Then Theorem~\ref{thm:asc_fib_lasc} and Definition~\ref{def:lasc_DMstack} imply that $\cM_{g,n}(\cZ)^{\textrm{qs-axis}}$ is locally algebraically simply connected.
	\end{proof}

	\section{Extending the Torelli map 
to Alexeev compactifications} \label{sec:extending_Torelli_map}
	
	The goal of this section is to prove Theorem~\ref{thm:extended_Torelli_Z-stable} and Theorem~\ref{thm:compactified_Torelli_stable_separating_axis-like}. This is an application of the Stacky Extension Theorem~(Theorem \ref{thm:extension-stacks}).


	\medskip

	When $g \ge 1$, recall the Torelli map $t_g \colon \cM_g \to \cA_g$ from the moduli stack of smooth genus $g$ curves to principally polarized abelian varieties of dimension $g$, where $t_g$ sends a curve $C \in \cM_g$ to the pair $(\mathrm{Pic}^{g-1}(C), \Theta(C))$; from now on, we instead write $(\mathrm{Jac}(C) \curvearrowright \mathrm{Pic}^{g-1}(C),\Theta(C))$ in order to remember $\mathrm{Pic}^{g-1}(C)$ as a $\mathrm{Jac}(C)$-torsor. When $g=1$, let $\overline{\cM}_1$ be the (nonseparated and non-DM) moduli stack of semistable curves of genus $1$. 
	In the introduction, we explained Alexeev's {\it compactified Torelli map} $\overline{t}_g \colon \overline{\cM}_g \to \overline{\cA}_g^{\mathrm{Ale}}$ from \cite[\S 5]{Ale04}\footnote{Although the statement of \cite[Corollary 5.4]{Ale04} is written for coarse moduli spaces (good moduli spaces when $g=1$), \cite[Theorem 5.2, 5.3, 5.4]{Ale04} imply that the statement lifts as a morphism between moduli stacks.}, where $\overline{\cA}_g^{\mathrm{Ale}}$ is the irreducible component of the moduli stack $\overline{\cA}_g$ of principally polarized stable semi-abelic pairs containing $\cA_g$; see \cite[\S 1.2]{CV11} for a combinatorial description of $\overline{t}_g(C)$ for any stable curve $C \in \overline{\cM}_g$.
	
	
	
	Consider a pair $(g,n)$ with $2g-2+n >0$; for this paragraph, assume $g \ge 1$ and $n \ge 1$. If $g \ge 2$, then there is a projection $p_{g,n} \colon \overline{\cM}_{g,n} \to \overline{\cM}_g$, which sends any stable $n$-pointed curve $(C;\sigma_1,\dotsc,\sigma_n)$ of genus $g$ to a stable unpointed curve $C'$ of genus $g$; $C'$ is constructed by forgetting the $n$ points $\sigma_1,\dotsc,\sigma_n$ on $C$ and then repeatedly contracting rational tails and bridges one at a time until the resulting curve $C'$ is stable. Using this, the {\it compactified Torelli map} for $\overline{\cM}_{g,n}$ is $\overline{t}_{g,n} \colonequals \overline{t}_g \circ p_{g,n} \colon \overline{\cM}_{g,n} \to \overline{\cA}_g^{\mathrm{Ale}}$, which a proper morphism between integral proper DM moduli stacks. If $g=1$ instead, consider the projection $p_{1,n} \colon \overline{\cM}_{1,n} \to \overline{\cM}_{1,1}$ which is defined similarly as before, except that the data of the first marked point is not forgotten. Using the forgetful map 
$p_{1,1} \colon \overline{\cM}_{1,1} \to \overline{\cM}_1$, the {\it compactified Torelli map} for $\overline{\cM}_{1,n}$ is $\overline{t}_{1,n} \colonequals \overline{t}_1 \circ p_{1,1} \circ p_{1,n} \colon \overline{\cM}_{1,n} \to \overline{\cA}_1^{\mathrm{Ale}}$, a proper morphism between integral proper DM moduli stacks.
	
	Caporaso and Vivani in \cite[Theorem 2.1.7]{CV11} gave a criterion for the fibers of the set-theoretic compactified Torelli map $|\overline{t}_g| \colon |\overline{\cM}_g| \to \left|\overline{A}_g^{\mathrm{Ale}}\right|$, where their criterion also applies to the pointed cases $|\overline{t}_{g,n}|$. To understand their criteria, let us explain their notations. First, recall that a node $p$ of a curve $C$ is {\it separating} if the normalization of $C$ at $p$ is disconnected; analogously, a node $p$ of a finite disjoint union of curves is {\it separating} if $p$ is a separating node of the unique connected component containing $p$. Given a nodal curve $C$ (our convention implies that $C$ is connected, which is not the convention of \cite{CV11}), define $\widetilde{C}$ to be the normalization of $C$ at the set of separating nodes of $C$; so $\widetilde{C}$ is a finite disjoint union of nodal curves without separating nodes. Given a nodal curve $C_1$ free from separating nodes (i.e., without separating nodes), Caporaso and Viviani define the notion of {\it stabilization} in \cite[\S 1.3]{CV11}, turning $C_1$ into a ``stable'' curve; we note that the definition of ``stable'' curve in loc. cit. is different than the standard definition (via the ampleness of canonical divisor) because $\PP^1$ and nodal plane cubics are called ``stable'' in loc. cit. To avoid this confusion, we 
make the following definition:
	
	\begin{definition} \label{def:almost_polystable_curves}
		A nodal curve $C$ of arithmetic genus $g$ is {\it almost stable} if:
		\begin{enumerate}
			\item $g \ge 2$ and $C$ is stable (i.e., $\omega_C$ is ample),
			\item $g = 1$ and $C$ is irreducible (i.e., $C$ is a nodal plane cubic), or
			\item $g = 0$ and $C \cong \PP^1$.
		\end{enumerate}
		A reduced proper scheme $D$ of pure dimension one is {\it almost polystable} if $D$ is a finite disjoint union of almost stable curves.
	\end{definition}

	Noting that Definition~\ref{def:ratl_bridge} can be extended to a finite disjoint union of nodal curves, we say a {\it rational tail} (resp., {\it rational bridge}) is a rational $1$-bridge (resp., rational $2$-bridge); when a curve is prestable or almost stable, there are no rational tails and bridges.

	\begin{definition} \label{def:stabilization}
		Let $C$ be a finite disjoint union of nodal curves. Then a {\it stabilization} of $C$ is an almost polystable curve $C^{\mathrm{st}}$, where there exists a morphism $C \rightarrow C^{\mathrm{st}}$ which is a sequence of contractions of irreducible components $E \cong \PP^1$ that are rational tails or bridges.
	\end{definition}

	By definition of almost polystable curves, the isomorphism class of $C^{\mathrm{st}}$ is uniquely determined by $C$ while the contraction morphism $C \rightarrow C^{\mathrm{st}}$ is not, unless every connected component of $C$ of genus zero or one is almost stable. Motivated by \cite[Remark 1.3.1, Corollary 1.3.3]{CV11}, we extend the definition of $|\overline{t}_g|$ to a finite disjoint union of curves, where a product $(A \curvearrowright X,D) \times (B \curvearrowright Y,E)$ of pairs of torsors and their divisors are understood as $(A \times B \curvearrowright X \times Y, D \times Y + X \times E)$: 
	
	\begin{definition} \label{def:Torelli_almost_polystable}
		Let $C_1,\dotsc,C_m$ be 
the connected components of an almost polystable curve $C$ defined over an algebraically closed field $K$. If the genus of $C$ is $g = \sum g_i$ with $g_i \colonequals p_a(C_i)$ for every $i$, then
		\[
			\overline{t}_g(C) \colonequals \prod_{g_i>0} \overline{t}_{g_i}(C_i).
		\]
	\end{definition}

	Now \cite[Remark 1.3.1, Corollary 1.3.3]{CV11} is summarized as follows:
	
	\begin{lemma} \label{l:Torelli_normalization-sep_stabilization}
		Let $(C;\sigma_1,\dotsc,\sigma_n)$ be a stable $n$-pointed curve of genus $g$ defined over an algebraically closed field. Then $\overline{t}_{g,n}(C) \cong \overline{t}_{g}(C^{\mathrm{pst}})$, where $C^{\mathrm{pst}}$ is defined as the disjoint union of every connected component of $(\widetilde{C})^{\mathrm{st}}$ whose genus is greater than zero.
	\end{lemma}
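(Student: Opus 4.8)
The plan is to exhibit $C^{\mathrm{pst}}$ as the outcome of a short chain of elementary modifications of $C$ and to check that each modification leaves the Alexeev--Torelli image $\overline{t}$ unchanged, the individual invariances being exactly the content of \cite[Remark 1.3.1, Corollary 1.3.3]{CV11}. Write $C_0$ for the underlying (unpointed) nodal curve of $C$ and $C_0^{\mathrm{st}}$ for its stabilization in the sense of Definition~\ref{def:stabilization}. The chain to establish is
\[
\overline{t}_{g,n}(C)\;\overset{(1)}{=}\;\overline{t}_g\big(p_{g,n}(C)\big)\;\overset{(2)}{=}\;\overline{t}_g(C_0)\;\overset{(3)}{=}\;\prod_{g_i>0}\overline{t}_{g_i}\big((\widetilde{C})^{\mathrm{st}}_i\big)\;\overset{(4)}{=}\;\overline{t}_g\big(C^{\mathrm{pst}}\big),
\]
where $(\widetilde{C})_i$ runs over the connected components of $\widetilde{C}$ and $g_i=p_a((\widetilde{C})_i)$, so that $g=\sum_i g_i$.

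Equality $(1)$ is the definition $\overline{t}_{g,n}=\overline{t}_g\circ p_{g,n}$ (resp.\ $\overline{t}_{1,n}=\overline{t}_1\circ p_{1,1}\circ p_{1,n}$). For $(2)$: the curve $p_{g,n}(C)$ is obtained from $C_0$ by successively contracting the $\PP^1$'s that become rational tails or bridges once the marked points are forgotten, so $C_0\to p_{g,n}(C)$ is a stabilization in the sense of Definition~\ref{def:stabilization}, and by \cite[Remark 1.3.1]{CV11} the value $\overline{t}_g$ is unchanged under such contractions; when $g=1$ one additionally uses that $\overline{t}_1$ sends every cycle of $\PP^1$'s — and the distinguished retained marked point of $p_{1,n}$ — to the same point of $\overline{\cA}_1^{\mathrm{Ale}}$ as the irreducible nodal cubic. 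For $(3)$: applying the separating-node product formula of \cite[Corollary 1.3.3]{CV11} at each separating node of $C_0$ decomposes $\overline{t}_g(C_0)$ as the box product of the $\overline{t}_{g_i}$ of the connected components of $\widetilde{C}$ (the normalization of $C_0$ at its separating nodes); replacing each such component by its stabilization again does not change $\overline{t}_{g_i}$ by \cite[Remark 1.3.1]{CV11}, and the components with $g_i=0$ contribute only the trivial zero-dimensional principally polarized stable semi-abelic pair, which is why the product ranges over $g_i>0$. Finally $(4)$ is the defining formula for $\overline{t}_g$ on the almost polystable curve $C^{\mathrm{pst}}=\bigsqcup_{g_i>0}(\widetilde{C})^{\mathrm{st}}_i$ given in Definition~\ref{def:Torelli_almost_polystable}.

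The only genuine work beyond quoting \cite{CV11} is the dual-graph bookkeeping hidden in $(2)$ and $(3)$: one must verify that ``forget the marked points and stabilize'', ``normalize at separating nodes'', and ``contract rational tails and bridges'' may be carried out in any order, up to the cleanup of isolated genus-zero components, so that the end result is indeed the $C^{\mathrm{pst}}$ of the statement; and one must dispatch the low-genus regimes, namely $g=0$ (both sides are the point $\overline{\cA}_0$ and $C^{\mathrm{pst}}=\emptyset$) and $g=1$ (cycles of $\PP^1$'s, the distinguished marked point of $p_{1,n}$, and the non-separatedness of $\overline{\cM}_1$). I expect step $(2)$ to be the main obstacle, since reconciling the $p_{g,n}$-contraction with the almost-polystable stabilization $(\,\cdot\,)^{\mathrm{st}}$ requires a careful comparison of the two recipes for discarding rational components; everything genuinely geometric — the product decomposition of the compactified Jacobian at a separating node and its invariance under contracting rational tails and bridges — is supplied verbatim by \cite[Remark 1.3.1, Corollary 1.3.3]{CV11}.
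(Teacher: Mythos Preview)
Your proposal is correct and follows the same approach as the paper: the paper does not give a separate proof but simply states that the lemma is a summary of \cite[Remark 1.3.1, Corollary 1.3.3]{CV11}, and your chain $(1)$--$(4)$ spells out precisely how those two ingredients combine. The only cosmetic point is that in step $(2)$ you apply $\overline{t}_g$ to the possibly unstable curve $C_0$; this is harmless since \cite{CV11} defines the Alexeev data $(\mathrm{Jac}\curvearrowright\overline{\mathrm{Pic}}^{g-1},\Theta)$ for any nodal curve and Remark~1.3.1 there asserts its invariance under stabilization, but you may want to say so explicitly rather than flag it as an obstacle.
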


	An immediate consequence of Lemma~\ref{l:Torelli_normalization-sep_stabilization} is if two stable $n$-pointed curves $C_1$ and $C_2$ of genus $g$ satisfy $C_1^{\mathrm{pst}} \cong C_2^{\mathrm{pst}}$, then $\overline{t}_{g,n}(C_1) \cong \overline{t}_{g,n}(C_2)$. In this case, note that $C_1^{\mathrm{pst}}$ is an almost polystable curve (with no connected component isomorphic to $\PP^1$) free from separating nodes.
	
	On the other hand, \cite[Theorem 2.1.7]{CV11} gives an equivalent condition for when $\overline{t}_g(C_1) \cong \overline{t}_g(C_2)$, which is \emph{a priori} weaker than the previous condition. In loc. cit., Caporaso and Viviani use the notion of {\it $\mathrm{C}1$-sets} from \cite[Definition 2.3.1]{CV10} and {\it $\mathrm{C}1$-equivalences} from \cite[Definition 2.1.5]{CV11}. Since we only need those notions for almost polystable curves that are free from separating nodes, we give the following equivalent definition from \cite[Lemma 2.3.2(ii)]{CV10} instead of explaining the graph theory involved in \cite[Definition 2.3.1]{CV10}:
	
	\begin{definition}[\cite{CV10}*{Lemma 2.3.2(ii)}] \label{def:C1-set}
		Let $C$ be a finite disjoint union of nodal curves without separating nodes. A set $S$ of nodes of $C$ (equivalently, a subset $S$ of the set $E(\Gamma_C)$ of edges of the dual graph $\Gamma_C$ of $C$) is a {\it $\mathrm{C}1$-set} if it is of the form $\{p\} \sqcup (S \setminus \{p\})$, where $p \in S$ and the preimage of $S \setminus \{p\}$ in the normalization $\nu_p \colon C_p^\nu \to C$ of $C$ at $p$ is exactly the set of separating nodes of $C_p^\nu$.
	\end{definition}
	
	Note that Definition~\ref{def:C1-set} is well-defined by \cite[Lemma 2.3.2(ii)]{CV10}, i.e., if $p,q$ are two distinct points of a $\mathrm{C}1$-set $S$, then $S \setminus \{p\}$ is the set of separating nodes of $C_p^\nu$ and $S \setminus \{q\}$ is also the set of separating nodes of $C_q^\nu$. This implies that the set $E(\Gamma_C)$, of nodes of a finite disjoint union $C$ of nodal curves without separating nodes, has a unique partition into $\mathrm{C}1$-sets. 
	
	To define {\it $\mathrm{C}1$-equivalence}, first denote $\mathrm{Set}^1C = \mathrm{Set}^1\Gamma_C$ to be the set of $\mathrm{C}1$-sets of $C$ (equivalently, of the dual graph $\Gamma_C$). Then the following is the extensions of \cite[Definition 2.1.5]{CV11} to the disconnected case:
	
	\begin{definition}[\cite{CV11}*{Definition 2.1.5}] \label{def:C1-equivalence}
		Let $C_1$, $C_2$ be finite disjoint unions of nodal curves without separating nodes. For each $i=1,2$, let $\nu_i \colon C_i^\nu \to C_i$ be the normalization. Then, $C_1$ and $C_2$ are {\it $\mathrm{C}1$-equivalent} if the following hold:
		\begin{enumerate}
			\item there is an isomorphism $\phi \colon C_1^\nu \xrightarrow{\cong} C_2^\nu$, and
			\item $\phi$ induces a bijection $\mathrm{Set}^1C_1 \to \mathrm{Set}^2C_2$ by sending $S \in \mathrm{Set}^1C_1$ to $S' \in \mathrm{Set}^1C_2$, where $\phi(\nu_1^{-1}(S)) = \nu_2^{-1}(S')$.
		\end{enumerate}
	\end{definition}

	\begin{example} \label{ex:not_C1-equivalent}
		This example is motivated by \cite[Example 2.1.2(2)]{CV11}. Let $K$ be an algebraically closed field. Fix $g \ge 1$, $(E;p,q,r,s) \in \cM_{g,4}(K)$, and $t_1 \neq t_2$ in $\PP^1_K(K) \setminus 
		\{0,1,\infty\}$ such that $(\PP^1_K;0,1,\infty,t_1) \not \cong (\PP^1_K;0,1,\infty,t_2)$ in $\cM_{0,4}(K)$. For each $i=1,2$, let $C_i \in \overline{\cM}_{g+3}(K)$ be a stable curve such that the normalization $\nu_i \colon C_i^\nu \cong E \sqcup \PP^1 \to C_i$ satisfies $\nu_i(p) = \nu_i(0)$, $\nu_i(q)=\nu_i(1)$, $\nu_i(r)=\nu_i(\infty)$, and $\nu_i(s)=\nu_i(t_i)$. Since every node of $C_i$ is not separating for any $i=1,2$, $C_i^\mathrm{pst} \cong C_i$. So if $C_1$ and $C_2$ were $\mathrm{C}1$-equivalent, then they must be isomorphic. However, their normalizations (as a disjoint union of $4$-pointed curves) are not isomorphic, so they are not $\mathrm{C}1$-equivalent.
	\end{example}

	Since $|\overline{t}_{g,n}| \colon |\overline{\cM}_{g,n}| \to \left|\overline{\cA}_g^{\mathrm{Ale}}\right|$ only depends on $C^\mathrm{pst}$ for every $C \in |\overline{\cM}_{g,n}|$ by Lemma~\ref{l:Torelli_normalization-sep_stabilization} (which differs from $\widetilde{C}^{\mathrm{st}}$ by removing $\PP^1$ components of $\widetilde{C}^{\mathrm{st}}$), the following is a slight extension of \cite[Theorem 2.1.7]{CV11} by using Lemma~\ref{l:Torelli_normalization-sep_stabilization}:
	
	\begin{theorem} \label{thm:Torelli_C1-equivalence}
		Suppose that $(C;\sigma_1,\dotsc,\sigma_n)$ and $(C';\sigma_1',\dotsc,\sigma_n')$ are almost stable $n$-pointed curves of genus $g > 0$ defined over an algebraically closed field $K$. Then, $\overline{t}_{g,n}(C) \cong \overline{t}_{g,n}(C')$ if and only if $C^\mathrm{pst}$ and $(C')^\mathrm{pst}$ from Lemma~\ref{l:Torelli_normalization-sep_stabilization} are C1-equivalent.
	\end{theorem}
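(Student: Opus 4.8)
The plan is to reduce the statement to the (connected, unpointed) Torelli theorem of Caporaso--Viviani, \cite[Theorem 2.1.7]{CV11}, by means of Lemma~\ref{l:Torelli_normalization-sep_stabilization}. By that lemma, $\overline{t}_{g,n}(C)\cong\overline{t}_g(C^{\mathrm{pst}})$ and $\overline{t}_{g,n}(C')\cong\overline{t}_g((C')^{\mathrm{pst}})$, so it suffices to prove that $\overline{t}_g(C^{\mathrm{pst}})\cong\overline{t}_g((C')^{\mathrm{pst}})$ if and only if $C^{\mathrm{pst}}$ and $(C')^{\mathrm{pst}}$ are $\mathrm{C}1$-equivalent. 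Recall from the discussion following Lemma~\ref{l:Torelli_normalization-sep_stabilization} that $C^{\mathrm{pst}}$ and $(C')^{\mathrm{pst}}$ are almost polystable curves, free from separating nodes, each of whose connected components has positive genus; in particular the notions of $\mathrm{C}1$-set (Definition~\ref{def:C1-set}) and $\mathrm{C}1$-equivalence (Definition~\ref{def:C1-equivalence}) apply to them, and both directions of the desired equivalence will follow once this reformulated statement is established, since it is symmetric and factors entirely through Lemma~\ref{l:Torelli_normalization-sep_stabilization}.

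It then remains to check two essentially formal points that lie between \cite[Theorem 2.1.7]{CV11} and the reformulated claim. First, the transition from the curves $\widetilde{C}^{\mathrm{st}}$, $\widetilde{C'}^{\mathrm{st}}$ appearing in loc.\ cit.\ to $C^{\mathrm{pst}}$, $(C')^{\mathrm{pst}}$, which differ only by discarding the connected components isomorphic to $\PP^1$: such a component has genus $0$ and, being almost stable, has no nodes, so deleting it changes neither the Torelli image --- by the product convention preceding Definition~\ref{def:Torelli_almost_polystable}, $\overline{t}_g$ only records the positive-genus components --- nor the partition of the remaining nodes into $\mathrm{C}1$-sets; hence this transition is immediate. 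Second, the passage from connected curves to the disjoint unions at hand: a principally polarized stable semi-abelic pair admits a decomposition into indecomposable factors that is unique up to permutation, and by Definition~\ref{def:Torelli_almost_polystable}, $\overline{t}_g$ of a disjoint union is the corresponding product over its connected components; dually, in a finite disjoint union of nodal curves without separating nodes every $\mathrm{C}1$-set lies in a single connected component, so $\mathrm{Set}^1$ of the union is the disjoint union of the $\mathrm{Set}^1$'s of the components. Consequently both ``$\overline{t}_g(C^{\mathrm{pst}})\cong\overline{t}_g((C')^{\mathrm{pst}})$'' and ``$C^{\mathrm{pst}}\equiv_{\mathrm{C}1}(C')^{\mathrm{pst}}$'' unwind into the same combinatorial condition: the existence of a bijection between the connected components of $C^{\mathrm{pst}}$ and those of $(C')^{\mathrm{pst}}$ under which matched components have isomorphic Torelli image, respectively are $\mathrm{C}1$-equivalent.

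After these reductions the claim is exactly \cite[Theorem 2.1.7]{CV11} applied componentwise, together with the easy remark that for $g\ge 2$ ``almost stable'' coincides with ``stable'' and for $g=1$ it means ``irreducible nodal'', both of which are among the curves treated in loc.\ cit. I expect the only step requiring genuine care to be the bookkeeping in the second reduction --- matching the indecomposable factors of the product semi-abelic pair with the connected components of $C^{\mathrm{pst}}$, and the $\mathrm{C}1$-sets of $C^{\mathrm{pst}}$ with those of its components --- but there is no new geometric input beyond Lemma~\ref{l:Torelli_normalization-sep_stabilization} and the theorem of Caporaso--Viviani.
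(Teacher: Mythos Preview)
Your reduction via Lemma~\ref{l:Torelli_normalization-sep_stabilization} is sound, and the passage from $\widetilde{C}^{\mathrm{st}}$ to $C^{\mathrm{pst}}$ by deleting $\PP^1$-components is handled correctly. The gap is in the Torelli side of your second reduction. You assert that $\overline{t}_g(C^{\mathrm{pst}})\cong\overline{t}_g((C')^{\mathrm{pst}})$ unwinds, via unique factorization of principally polarized stable semi-abelic pairs, into a bijection of components with matching Torelli images. For this you need not only unique factorization but also that $\overline{t}_{g_i}(C_i)$ is \emph{indecomposable} for every connected almost stable $C_i$ free from separating nodes with $g_i>0$; otherwise one product could split differently from the other and no component-matching bijection need exist. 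This indecomposability is a genuine geometric input (it is part of the machinery in \cite{CV11}, not a formality), so calling it ``bookkeeping'' undersells what is required. Your $\mathrm{C}1$-equivalence reduction is correct but also needs a short argument you omit: one must check that the isomorphism $\phi$ of normalizations carries each $A_i^\nu$ into a single $B_j^\nu$, which follows by chasing a chain of nodes in $A_i$ through the $\mathrm{C}1$-set bijection.

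The paper avoids both issues by a different route: it reduces the pointed case to the unpointed connected case via the forgetful map $p_{g,n}\colon\overline{\cM}_{g,n}\to\overline{\cM}_g$ (respectively $p_{1,1}\circ p_{1,n}$ when $g=1$), proving by a direct combinatorial analysis of separating nodes, rational tails, and rational bridges that $(p_{g,n}(C))^{\mathrm{pst}}\cong C^{\mathrm{pst}}$. Then \cite[Theorem~2.1.7]{CV11} applies verbatim to the \emph{connected} stable curves $p_{g,n}(C)$ and $p_{g,n}(C')$, and the disconnected comparison of their $(\cdot)^{\mathrm{pst}}$'s is already inside that black box. Your approach becomes correct once you supply and cite the indecomposability statement; it is arguably more conceptual, while the paper's approach is more economical in that it never reopens the Caporaso--Viviani machinery.
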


	\begin{proof}
		The case $n=0$ and $g \ge 2$ follows from \cite[Theorem 2.1.7]{CV11}, and $(g,n)=(1,0)$ case follows from Lemma~\ref{l:Torelli_normalization-sep_stabilization}. When $g = 1$ and $n > 0$, then the projection $p_{1,1} \circ p_{1,n} \colon \overline{\cM}_{1,n} \twoheadrightarrow \overline{\cM}_{1,1}$ has the property that the irreducible curve $C^{\mathrm{pst}}$ (resp., $(C')^\mathrm{pst}$) is isomorphic to $(p_{1,1} \circ p_{1,n})(C)$ (resp., $(p_{1,1} \circ p_{1,n})(C')$), so the assertion follows again by \cite[Theorem 2.1.7]{CV11}.
		
		The remaining case is when $g \ge 2$ and $n > 0$. Let $p_{g,n} \colon \overline{\cM}_{g,n} \twoheadrightarrow \overline{\cM}_g$. Since $\overline{t}_{g,n}$ factors through $\overline{t}_g$ via a surjection $p_{g,n}$, it suffices to show that $(p_{g,n}(C;\sigma_1,\dotsc,\sigma_n))^{\mathrm{pst}}$ is isomorphic to $C^\mathrm{pst}$ by Lemma~\ref{l:Torelli_normalization-sep_stabilization} and \cite[Theorem 2.1.7]{CV11}; the following proof also applies to $(C';\sigma_1',\dotsc,\sigma_n')$. Recall that $(p_{g,n}(C;\sigma_1,\dotsc,\sigma_n))^{\mathrm{pst}}$ is created from $C$ by contracting all rational tails and bridges, normalizing all separating nodes, contracting rational tails and bridges one at a time, then deleting any connected components isomorphic to $\PP^1_K$. Notice that a node $q$ of $p_{g,n}(C;\sigma_1,\dotsc,\sigma_n)$ is separating if and only if each node in the preimage of $q$ in $C$ is also separating. Moreover, for any separating node $q'$ of $C$, $p_{g,n}(q')$ in $p_{g,n}(C;\sigma_1,\dotsc,\sigma_n)$ is either a separating node or a smooth point. Hence, $(p_{g,n}(C;\sigma_1,\dotsc,\sigma_n))^{\mathrm{pst}}$ isomorphic to $C^\mathrm{pst}$ as long as there is a stabilization $\widetilde{C} \to (\widetilde{C})^\mathrm{st}$ that contracts all rational tails and bridges of $C$.
		
		Observe that in $\widetilde{C}$, rational tails of $C$ correspond to a disjoint union of connected components of $\widetilde{C}$, each of which is isomorphic to $\PP^1_K$; so rational tails are not present in $C^{\mathrm{pst}}$ by Lemma~\ref{l:Torelli_normalization-sep_stabilization}. In $\widetilde{C}$, rational bridges of $C$ correspond to a disjoint union of 1) connected components of $\widetilde{C}$, each of which is isomorphic to $\PP^1_K$, and 2) at most one rational bridge $B$, which is properly contained in a connected component $C_1$. If such $B$ exist, then there is a stabilization of $C_1$ where $B$ is contracted (recall that stabilization map is not unique when $g(C_1) = 1$). Thus, there exists a stabilization $\widetilde{C} \to (\widetilde{C})^{\mathrm{st}}$ that contracts all rational bridges of $C$. Hence, this induces a desired isomorphism $(p_{g,n}(C;\sigma_1,\dotsc,\sigma_n))^{\mathrm{pst}} \cong C^\mathrm{pst}$.
	\end{proof}

The following example shows why we must consider the quasi-separating axis-like locus.

	\begin{example}[{Torelli map does not extend to all axis-like curves}] \label{ex:why_quasi-separating_axis-like}
		Let $C_i \in \overline{\cM}_{g+3}(K)$ for $i=1,2$ as in Example~\ref{ex:not_C1-equivalent}, so that the dual graphs of $C_1$ and $C_2$ are isomorphic. If there exists an extremal assignment $\cZ$ of $G_{g+3,0}$ such that $\cZ(\Gamma_{C_i})$ is the non-quasi-separating rational $4$-bridge, then \cite[Corollary 1(2)]{Dav78} and \cite[Tag 0EUS]{Stacks} imply that there are $\cZ(\Gamma_{C_i})$-contractions $C_i \to C'$ with $C'$ a $\cZ$-stable axis-like curve that is not quasi-separating. This implies that $C_1,C_2$ are non-isomorphic $K$-points of a substack $F_\cZ^{-1}(C')$ of $\cM_{g+3}^{\cZ-\mathrm{qs}}$. Since $\overline{t}_{g+3}(C_1) \not \cong \overline{t}_{g+3}(C_2)$ by Theorem~\ref{thm:Torelli_C1-equivalence}, the restriction of $|\overline{t}_{g+3}|$ to $|\cM_{g+3}^{\cZ-\mathrm{qs}}|$ cannot factor through $|F_\cZ|$ because $|\overline{t}_{g+3}|(|F_\cZ|^{-1}(C'))$ is not a single point of $\left|\overline{\cA}_{g+3}^\mathrm{Ale}\right|$. This justifies the notion of quasi-separating axis-like curves as in Definition~\ref{def:quasi-separating_axis-like}, assuming that it is reasonable to enforce marked points to be distinct and lie on the smooth locus.
	\end{example}

	Now we are ready to prove the following main theorem, which immediately implies Theorem~\ref{thm:main_thm_partial_extension_axis_like}:
	
	\begin{theorem} \label{thm:extended_Torelli_Z-stable}
		Assume that $(g,n)$ satisfies $2g-2+n>0$. On $\overline{\cM}_{g,n}(\cZ)$, the Torelli map $t_{g,n} \colon \cM_{g,n} \to \cA_g$ admits a partial extension $t_{g,n}^{\cZ} \colon \cM_{g,n}(\cZ)^{\textrm{qs-axis}} \to \overline{\cA}_g^\mathrm{Ale}$ such that the following diagram commutes:
		\[
			\xymatrix@R=0.7in@C=0.5in{
				\overline{\cM}_{g,n} \ar[r]^{\overline{t}_{g,n}} & \overline{\cA}_g^{\mathrm{Ale}}\\
				\cM_{g,n}^{\cZ-\mathrm{qs}} \ar@{^{(}->}[u]^{\iota'_\cZ} \ar@{>>}[r]_(0.35){F_\cZ} & \cM_{g,n}(\cZ)^{\textrm{qs-axis}} \ar[u]_{t_{g,n}^{\cZ}},
			}
		\]
		where $\iota'_\cZ \colon \cM_{g,n}^{\cZ-\mathrm{qs}} \to \overline{\cM}_{g,n}$ is the open immersion. 
		The map $t_{g,n}^{\cZ}$ is unique up to unique $2$-isomorphism.
	\end{theorem}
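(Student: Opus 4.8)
The plan is to apply the Stacky Extension Theorem (Theorem~\ref{thm:extension-stacks}) with source $\cX\colonequals(\cM_{g,n}(\cZ)^{\textrm{qs-fold}})^\nu$, target $\cY\colonequals\overline{\cA}_g^{\mathrm{Ale}}$, the dense open substack $\cU\colonequals\cM_{g,n}$ of $\cX$, and the morphism $f\colonequals t_{g,n}\colon\cM_{g,n}\to\cA_g\subset\cY$. First I would record that the structural hypotheses are already in hand: $\cY$ is a proper DM stack; $\cX$ is separated and of finite type because $\cM_{g,n}(\cZ)^{\textrm{qs-fold}}$ is an open substack of the proper DM stack $\overline{\cM}_{g,n}(\cZ)$ (Proposition~\ref{prop:forget_stable_Z-qsep_to_Z-stable_qsep_fold-like}) and normalization is finite; and $\cX$ is locally algebraically simply connected by Corollary~\ref{cor:moduli_Z-stable_qs_fold-like_lasc}. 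Thus the proof reduces to verifying properties \ref{extension:sets} and \ref{extension:DVRs} of Theorem~\ref{thm:extension-stacks} for $f$. Granting these, the theorem produces $t_{g,n}^{\cZ}\colon\cX\to\cY$ extending $t_{g,n}$, unique up to unique $2$-isomorphism; the commutativity of the displayed square is then immediate, since $t_{g,n}^{\cZ}\circ F^\nu_\cZ$ and $\overline{t}_{g,n}\circ\iota'_\cZ$ are two morphisms from the normal stack $\cM_{g,n}^{\cZ-\mathrm{qs}}$ to the separated stack $\cY$ agreeing on the dense open $\cM_{g,n}$ (over which $F^\nu_\cZ$ is an isomorphism), hence uniquely $2$-isomorphic by \cite[Proposition A.1]{FMN10}.

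For property \ref{extension:sets}, I would define the set map $|g|\colon|\cX|\to|\cY|$ as follows. By Corollary~\ref{cor:moduli_Z-stable_qs_fold-like_lasc} and Proposition~\ref{prop:forget_stable_Z-qsep_to_Z-stable_qsep_fold-like}, $\nu$ is bijective and $F^\nu_\cZ$ is surjective on points, so every point of $\cX$ is $F^\nu_\cZ(C^s)$ for some stable $n$-pointed curve $C^s\in|\cM_{g,n}^{\cZ-\mathrm{qs}}|$; set $|g|(F^\nu_\cZ(C^s))\colonequals|\overline{t}_{g,n}|(C^s)$. This visibly extends $|t_{g,n}|$ since $F^\nu_\cZ$ is an isomorphism over $\cM_{g,n}$. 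The real content is well-definedness: if $F_\cZ(C^s_1)=F_\cZ(C^s_2)=C$, I must show $\overline{t}_{g,n}(C^s_1)\cong\overline{t}_{g,n}(C^s_2)$. Here I would use Lemma~\ref{l:contract_Z-qs}: both $C^s_i$ are $\cZ$-quasi-separating and $\overline{C^s_i\setminus\cZ(C^s_i)}$ is canonically identified with the normalization of $C$ at its rational fold points together with its gluing data, so $C^s_1$ and $C^s_2$ differ only by replacing each quasi-separating rational multibridge $\cZ(C^s_i)$ by another stable pointed genus-zero curve plugging the same fold point. The key combinatorial observation — and the reason the quasi-separating hypothesis is exactly the right one, contrast Example~\ref{ex:why_quasi-separating_fold-like} — is that a quasi-separating rational $m$-bridge has at most one incident branch meeting it in $\geq 2$ (in fact $\leq 3$) points, all other branches meeting it in a single point; tracking this through the operation $C\mapsto C^{\mathrm{pst}}$ of Lemma~\ref{l:Torelli_normalization-sep_stabilization} (normalize at separating nodes, stabilize, discard $\PP^1$-components), the multibridge either disappears entirely or is forced to contribute a rigid configuration (a $\PP^1$ carrying exactly three marked branch points, glued in a unique way), independently of which genus-zero curve was plugged in. Hence $(C^s_1)^{\mathrm{pst}}$ and $(C^s_2)^{\mathrm{pst}}$ have isomorphic pointed normalizations and matching $\mathrm{C}1$-set partitions, so they are $\mathrm{C}1$-equivalent; since $\overline{t}_{g,n}(C^s_i)$ depends only on $(C^s_i)^{\mathrm{pst}}$ by Lemma~\ref{l:Torelli_normalization-sep_stabilization}, Theorem~\ref{thm:Torelli_C1-equivalence} gives $\overline{t}_{g,n}(C^s_1)\cong\overline{t}_{g,n}(C^s_2)$, as needed.

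For property \ref{extension:DVRs}, given a DVR $R$ with fraction field $K$ and a commutative square as in the statement, I would compose $h\colon\Spec R\to\cX$ with $\nu$ to get a family of $\cZ$-stable quasi-separating fold-like curves over $\Spec R$ with smooth generic fiber, then run the construction from the properness argument in the proof of Proposition~\ref{prop:forget_stable_Z-qsep_to_Z-stable_qsep_fold-like}: by \cite[Lemma 2.2, Corollary 2.10]{Smy13} there is a DVR $R'$ finite over $R$ with $\mathrm{Frac}(R')/K$ finite separable, sending closed point to closed point, together with a stable model over $\Spec R'$ whose classifying map lands in $\cM_{g,n}^{\cZ-\mathrm{qs}}$ (using Lemma~\ref{l:contract_Z-qs}). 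Composing with $\overline{t}_{g,n}$ yields an extension over $\Spec R'$ of the base change of $\Spec K\to\cA_g$, whose closed-fibre value is $\overline{t}_{g,n}$ of the stable special fiber $C^s_0$, and $F_\cZ(C^s_0)=\nu(h(\Spec k))$ by construction, so that value is precisely $|g|(h(\Spec k))$. Finally I would descend this extension from $\Spec R'$ down to $\Spec R$: the two pullbacks to the reduced scheme $\Spec(R'\otimes_R R')$ agree over its schematically dense generic fiber, hence are canonically $2$-isomorphic as $\cY$ is separated, giving an effective fppf descent datum and the desired $\overline h\colon\Spec R\to\cY$, whose closed point maps to $|g|(h(\Spec k))$ as just computed. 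I expect the well-definedness step in \ref{extension:sets} — the precise bookkeeping that the $\mathrm{C}1$-equivalence class of $(C^s)^{\mathrm{pst}}$ is unchanged under re-plugging quasi-separating multibridges — to be the main obstacle; everything else is assembling results already established (Smyth's contraction theory, Caporaso--Viviani's fiber analysis, and the $\mathrm{lasc}$ property of $\cX$).
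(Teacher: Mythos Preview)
Your proposal follows the same architecture as the paper's proof: apply the Stacky Extension Theorem with $\cX=(\cM_{g,n}(\cZ)^{\textrm{qs-fold}})^\nu$ and $\cY=\overline{\cA}_g^{\mathrm{Ale}}$, reduce to verifying \ref{extension:sets} and \ref{extension:DVRs}, and for \ref{extension:sets} show well-definedness by checking that two stable $\cZ$-quasi-separating curves over the same fold-like curve have $\mathrm{C}1$-equivalent $(-)^{\mathrm{pst}}$. Your identification of the core combinatorial point (each quasi-separating bridge either contracts away or contributes a rigid $(\PP^1;0,1,\infty)$ under $C\mapsto C^{\mathrm{pst}}$) matches the paper's case analysis; the paper in fact proves the stronger statement $(C^{s,1})^{\mathrm{pst}}\cong(C^{s,2})^{\mathrm{pst}}$ outright, but your weaker $\mathrm{C}1$-equivalence suffices via Theorem~\ref{thm:Torelli_C1-equivalence}.

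The one place you diverge is \ref{extension:DVRs}. You route through \cite[Lemma 2.2]{Smy13}, obtain a stable model only after passing to a finite extension $R'/R$, and then fppf-descend the resulting map $\Spec R'\to\cY$ back to $\Spec R$. This can be made to work (since $R'$ is $R$-flat, $R'\otimes_R R'$ embeds in its separable generic fiber and is therefore reduced, so the uniqueness argument for the descent datum and the cocycle condition on the triple product both go through), but it is unnecessarily delicate. The paper's route is shorter: by Corollary~\ref{cor:moduli_Z-stable_qs_fold-like_lasc}, $F_\cZ^\nu$ is a \emph{representable proper} morphism (a quasi-resolution), so the valuative criterion for $F_\cZ^\nu$ lifts $h\colon\Spec R\to\cX$ directly to $x\colon\Spec R\to\cM_{g,n}^{\cZ-\mathrm{qs}}$ over the \emph{same} DVR, with no base change and no descent; then $\overline h\colonequals\overline{t}_{g,n}\circ\iota'_\cZ\circ x$ does the job. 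This is worth adopting, since it also makes the compatibility $|\overline h|(\Spec k)=|g|(h(\Spec k))$ immediate from the definition of $|g|$.
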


	A special case of Theorem~\ref{thm:extended_Torelli_Z-stable} is when $\cZ = \sF$. In this case, the inclusion $\iota'_\cZ$ is an isomorphism by Definition~\ref{def:stable_Z-quasi-separating_axis-like} and $F_{\sF} = F$ by Proposition~\ref{prop:forget_DM_stable_to_stable_axis-like} and Proposition~\ref{prop:forget_stable_Z-qsep_to_Z-stable_qsep_axis-like}. This with Theorem~\ref{thm:extended_Torelli_Z-stable} immediately implies the following theorem, which immediately implies Theorem~\ref{thm:main_thm_pointed_extension_axis_like}. 

	\begin{theorem} \label{thm:compactified_Torelli_stable_separating_axis-like}
		Assume that $(g,n)$ satisfies $2g-2+n>0$. On $\overline{\cM}_{g,n}(\sF)$, the Torelli map $t_{g,n} \colon \cM_{g,n} \to \cA_g$ admits a compactification $\overline{t}_{g,n}^{\sF} \colon \overline{\cM}_{g,n}(\sF) \to \overline{\cA}_g^\mathrm{Ale}$ such that the following diagram of proper DM stacks commutes
		\[
		\xymatrix@R=0.5in@C=0.1in{
			\overline{\cM}_{g,n} \ar[rr]^{\overline{t}_{g,n}} \ar@{>>}[dr]_(0.45){F} & & \overline{\cA}_g^{\mathrm{Ale}}\\
			& \overline{\cM}_{g,n}(\sF) \ar[ur]_(0.6){\overline{t}_{g,n}^{\sF}},
		}
		\]
		The map $\overline{t}_{g,n}^{\sF}$ is unique up to unique $2$-isomorphism. 
	\end{theorem}

	\begin{proof}[Proof of Theorem~\ref{thm:extended_Torelli_Z-stable}]
		Since $\overline{\cM}_{g,n}(\cZ)$ is a proper DM stack by Definition~\ref{def:stack_Z-stable}, $\cM_{g,n}(\cZ)^{\textrm{qs-axis}}$ satisfy conditions of Theorem~\ref{thm:extension-stacks} by Proposition~\ref{prop:forget_stable_Z-qsep_to_Z-stable_qsep_axis-like} and Corollary~\ref{cor:moduli_Z-stable_qs_axis-like_lasc}. Thus, the existence of $t_{g,n}^\cZ$ follows from the existence of a set-theoretic extension $|t_{g,n}^\cZ| \colon |\cM_{g,n}(\cZ)^{\textrm{qs-axis}}| \to \left| \overline{\cA}_g^\mathrm{Ale} \right|$ of $t_{g,n} \colon \cM_{g,n} \to \cA_g$ such that $|\overline{t}_{g,n}| \circ |\iota_\cZ'| = |t_{g,n}^\cZ| \circ |F_\cZ|$ that satisfies Property~\ref{extension:DVRs} of Theorem~\ref{thm:extension-stacks}.
		
		We claim that Property~\ref{extension:DVRs} of Theorem~\ref{thm:extension-stacks} follows directly from the existence of the desired set-theoretic extension $|t_{g,n}^\cZ|$ and the fact that $F_\cZ$ is a proper representable morphism of stacks. To see this, suppose that $R$ is a DVR with $L \colonequals \mathrm{Frac}(R)$. Let $\iota_\eta \colon \Spec L \hookrightarrow \Spec R$ to be the inclusion of the generic point of $\Spec R$. Suppose that $y \colon \Spec R \to \cM_{g,n}(\cZ)^{\textrm{qs-axis}}$ is a morphism such that $y \circ \iota_\eta$ represents a $L$-point of $\cM_{g,n}$. 
		Since $F_\cZ$ is a quasi-resolution by Proposition~\ref{prop:forget_stable_Z-qsep_to_Z-stable_qsep_axis-like}, it is a representable proper morphism; 
		so interpreting $y \circ \iota_\eta$ as a $L$-point $x_\eta \in \cM_{g,n}^{\cZ-\mathrm{qs}}(L)$ (so that $F_\cZ \circ x_\eta \cong y \circ \iota_\eta$) implies that there is a lift $x \colon \Spec R \to \cM_{g,n}^{\cZ-\mathrm{qs}}$ of $y \colon \Spec R \to \cM_{g,n}(\cZ)^{\textrm{qs-axis}}$, i.e., $F_\cZ \circ x \cong y$. Then the existence of $|t_{g,n}^\cZ|$ such that $|\overline{t}_{g,n}| \circ |\iota_\cZ'| = |t_{g,n}^\cZ| \circ |F_\cZ|$ implies that $\overline{t}_{g,n} \circ \iota_\cZ' \circ x \colon \Spec R \to \overline{\cA}_g^\mathrm{Ale}$ is a DVR extension of $t_{g,n} \circ y \circ \iota_\eta \colon \Spec L \to \overline{\cA}_g^\mathrm{Ale}$ such that $|\overline{t}_{g,n} \circ \iota_\cZ' \circ x| = |t_{g,n}^\cZ| \circ |F_\cZ| \circ |x| = |t_{g,n}^\cZ| \circ |y|$, which implies Property~\ref{extension:DVRs} of Theorem~\ref{thm:extension-stacks}.
		
		
		Therefore, it remains to construct our desired $|t_{g,n}^\cZ|$, i.e., $|\overline{t}_{g,n}| \circ |\iota_\cZ'| = |t_{g,n}^\cZ| \circ |F_\cZ|$.
		To obtain such $|t_{g,n}^\cZ|$, it suffices to prove that for every two distinct points of $|\cM_{g,n}^{\cZ-\mathrm{qs}}|$ whose images under $|F_\cZ|$ coincide in $|\cM_{g,n}(\cZ)^{\textrm{qs-axis}}|$, their images under $|\overline{t}_{g,n}| \circ |\iota_\cZ'|$ also coincide in $\left| \overline{\cA}_g^\mathrm{Ale} \right|$. 
		To see this, let $K$ be any algebraically closed field, and let $(C;\sigma_1,\dotsc,\sigma_n)$ be a $\cZ$-stable $n$-pointed quasi-separating axis-like curve of genus $g$ defined over $\Spec K$. Suppose that for every $i=1,2$, $\phi_i \colon C^{s,i} \to C$ is the contraction of $\cZ(C^{s,i})$, where $(C^{s,i};\sigma_1^{s,i},\dotsc,\sigma_n^{s,i})$ is a stable $n$-pointed $\cZ$-quasi-separating curve of genus $g$ defined over $\Spec K$ and $\phi_i(\sigma_j^{s,i})=\sigma_j$ for all $j$. Then $|t_{g,n}^\cZ|$ is well-defined at $(C;\sigma_1,\dotsc,\sigma_n)$ if and only if $\overline{t}_{g,n}(C^{s,1}) \cong \overline{t}_{g,n}(C^{s,2})$, which is equivalent to the claim that $(C^{s,1})^{\mathrm{pst}}$ and $(C^{s,2})^{\mathrm{pst}}$ are $\mathrm{C}1$-equivalent by Theorem~\ref{thm:Torelli_C1-equivalence}. If $(C;\sigma_1,\dotsc,\sigma_n)$ is also a stable nodal curve (that is also $\cZ$-stable), then Proposition~\ref{prop:forget_stable_Z-qsep_to_Z-stable_qsep_axis-like} implies that $\phi_i$ is an isomorphism for all $i$ by Proposition~\ref{prop:forget_stable_Z-qsep_to_Z-stable}, so $(C^{s,1})^{\mathrm{pst}} \cong C^{\mathrm{pst}} \cong (C^{s,2})^{\mathrm{pst}}$, which implies that $(C^{s,1})^{\mathrm{pst}}$ and $(C^{s,2})^{\mathrm{pst}}$ are $\mathrm{C}1$-equivalent. Thus, assume that $(C;\sigma_1,\dotsc,\sigma_n)$ is not a nodal curve.
		
		Let us fix any $i \in \{1,2\}$ and describe $C^{s,i}$ at separating nodes in terms of $C$ and $\cZ(C^{s,i})$. Let $p_1,\dotsc,p_u$ be an enumeration of the non-nodal singular points of $C$, so that for every $j$, $p_j$ is an $m_j$-axis point (that is also quasi-separating) of $C$ with $m_j \ge 3$. For every $1 \le j \le u$, let $q_{j,1},\dotsc,q_{j,m_j}$ be the enumeration of $\nu_\mathrm{axis}^{-1}(p_j)$, where $\nu_{\mathrm{axis}} \colon C^\nu_\mathrm{axis} \to C$ is the normalization of $C$ at $\{p_1,\dotsc,p_u\}$. For each $j$, define $e_j$ to be $1$ if $p_j$ is a separating $m_j$-axis point of $C$; otherwise, define $e_j$ to be the maximum, among connected components $E$ of the normalization $\nu_{p_j} \colon C^\nu_{p_j} \to C$ of $C$ at $p_j$, of the number $\# \Supp (E \cap \nu_{p_j}^{-1}(p_j))$; by Definition~\ref{def:separating_axis_sing}, $e_j \ge 2$ if $p_j$ is not separating. Since each $p_j$ is a quasi-separating $m_j$-axis point of $C$, Definition~\ref{def:qsep_axis} implies that $e_j \le 3$ and $e_j=1$ if and only if $p_j$ is a separating $m_j$-axis point. Whenever $e_j >1$, assume, without loss of generality, that the image of $q_{1,1},\dotsc,q_{1,e_j}$ under the induced morphism $C^\nu_{\mathrm{axis}} \to C^\nu_{p_j}$ belong to the same connected component of $C^\nu_{p_j}$. Recalling the proof of Proposition~\ref{prop:fiber_M0,mbar_qsep} (which draws an analogy with the proof of Proposition~\ref{prop:fiber_M0,mbar}), $C^{s,i}$ is isomorphic to the nodal union of $C^\nu_\mathrm{axis}$ and quasi-separating rational $m_j$-bridges $(Z^i_j;q^i_{j,1},\dotsc,q^i_{j,m_j})$ (which is a stable $m_j$-pointed curve of genus zero) for every $1 \le j \le u$ by identifying $q_{j,v} \in C^\nu_\mathrm{axis}$ with $q^i_{j,v} \in Z^i_j$ for every $1 \le v \le m_j$; here, $\cZ(C^{s,i})$ is exactly the disjoint union of $Z^i_j$'s for every $1 \le j \le u$.
		
		We claim that the geometry of the normalization $\nu_{s,i} \colon \widetilde{C}^{s,i} \to C^{s,i}$ at separating nodes of $C^{s,i}$ can be described in terms of $\widetilde{C}$ and $Z^i_j$'s. To see this, suppose that $x$ is a separating node of $C^{s,i}$, so that $C^{s,i}$ decomposes into a union of two connected components $C^{s,i}_{x,1}$ and $C^{s,i}_{x,2}$ that intersect exactly at $x$. If $x$ is not contained in $\cZ(C^{s,i})$, then $\phi_i(x)$ is a node of $C$, and Definition~\ref{def:Z-stability} implies that $\phi_i(C^{s,i}_{x,1})$ and $\phi_i(C^{s,i}_{x,2})$ form the decomposition of $C$ into two connected components, such that their intersection is exactly $\phi_i(x)$. Hence, $\phi_i(x)$ is also a separating node of $C$. Similarly, if $x$ is a node of $C^{s,i}$ such that $\phi_i(x)$ is a separating node of $C$, then $x$ is also a separating node of $C^{s,i}$. 
		
		The remaining case is when $x$ is contained in $Z^i_j$ for some $j$. Assume that $x = q^i_{j,v}$ for some $v$. If $e_j=1$, then $Z^i_j$ is a separating rational $m_j$-bridge of $C^{s,i}$, thus $x = q^i_{j,v}$ is a separating node of $C^{s,i}$. If $e_j > 1$, then $Z^i_j$ is a quasi-separating rational $m_j$-bridge that is not a separating rational $m_j$-bridge. Let $E_j$ be the closure in $C$ of the connected component of $C \setminus \{p_j\}$ such that the closure $E^{s,i}_j$ of $\phi_i^{-1}(E_j \setminus \{p_j\})$ meets $Z^i_j$ at exactly $\{q^i_{j,1},\dotsc,q^i_{j,e_j}\}$. If $v > e_j$, then $x = q^i_{j,v}$ is not in $E^{s,i}_j$, so let $E^i_x$ be the connected component of $\overline{C^{s,i} \setminus Z_j}$ that contains $x$. Then, $E^i_x \cap Z^i_j = \{x\}$, and $\overline{C^{s,i} \setminus E_x}$ is connected such that $Z^i_j$ is also contained in $\overline{C^{s,i} \setminus E_x}$. Thus, $x$ is a separating node of $C^{s,i}$. The remaining case is when $v \le e_j$; then $x \in E^{s,i}_j \cap Z^i_j$. By Definition~\ref{def:Z-stability}, connectedness of $E_j \setminus \{p_j\}$ implies that $E^{s,i}_j$ is also connected, so that the dual graph $\Gamma^{s,i}_j$ of the subcurve $E^{s,i}_j \cup Z^i_j$ of $C^{s,i}$ contains a cycle $H$, where $E(H)$ contains $x$ (as an edge of $\Gamma^{s,i}_j$). Since $\Gamma^{s,i}_j$ is the complete subgraph of the dual graph of $C^{s,i}_j$, the existence of cycle $H$ with $x \in E(H)$ implies that $x$ is not a separating node of $C^{s,i}$. If $x$ is not equal to $q^i_{j,v}$ for any $v$, then $x$ is a node of $Z^i_j$. If $e_j=1$, then $Z^i_j$ is a separating rational $m_j$-bridge, so that every node of $Z^i_j$ is a separating node of $C^{s,i}$. On the other hand, if $e_j > 1$, then there is a minimal subcurve $\widehat{Z}^i_j$ of $Z^i_j$ that contains $\{q^i_{j,1},\dotsc,q^i_{j,e_j}\}$. In this case, $x$ (as a node of $Z^i_j$) is a separating node of $C^{s,i}$ if and only if $x$ is not contained in $\widehat{Z}^i_j$.
		
		Recalling that $(C^{s,i})^\mathrm{pst}$ is the removal of all connected components of genus zero from $(\widetilde{C}^{s,i})^{\mathrm{st}}$ by Lemma~\ref{l:Torelli_normalization-sep_stabilization}, define $\widehat{C}^{s,i}$ as the disjoint union of connected components of $\widetilde{C}^{s,i}$ of genus greater than zero. Also, let $\widehat{C}'$ be the normalization of $C$ at the set of separating nodes of an axis-like curve $C$. Note that $\widehat{C}'$ still contains $p_j$ as a quasi-separating $m_j$-axis point for every $j$. Define  $\widehat{C}_\mathrm{axis}$ as the intersection of $\widehat{C}^{s,i}$ and the normalization $\widehat{C}'_\mathrm{axis}$ of $\widehat{C}'$ at $\{p_1,\dotsc,p_u\}$; $\widehat{C}'_\mathrm{axis}$ being a reduced closed subscheme of $\widetilde{C}^{s,i}$ follows from the analysis of separating nodes of $C^{s,i}$ from the previous two paragraphs. By construction, a connected component $D$ of $\widehat{C}'_\mathrm{axis}$ is not in $\widehat{C}_\mathrm{axis}$ if and only if the image $D'$ of $D$ in $\widehat{C}'$ is a connected component of genus zero or the image $D''$ of $D$ in $C^{s,i}$ is a curve of genus zero that meets $\sqcup_{j=1}^u Z^i_j$ exactly once; the latter condition is equivalent to the image $D'''$ of $D$ in $C$ being a curve of genus zero that meets the set $\{p_1,\dotsc,p_u\}$; so $D'''$ is not contained in any of the $E_j$'s for any $1 \le j \le u$. This implies that the isomorphism class of $\widehat{C}_\mathrm{axis}$ is independent of the choice of $i \in \{1,2\}$ (in fact, $\widehat{C}_\mathrm{axis}$ only depends on $C$). From this, the analysis of separating nodes of $C^{s,i}$ from the previous two paragraphs implies that $\widehat{C}^{s,i}$ is the nodal union of $\widehat{C}_\mathrm{axis}$ and $\widehat{Z}^i_j$'s for all $1 \le j \le u$, where $\widehat{Z}^i_j$ is understood as the empty set whenever $e_j = 1$. Observe that for each $1 \le j \le u$, when $\widehat{Z}^i_j$ is nonempty (i.e., $e_j > 1$), then the intersection of $\widehat{C}_\mathrm{axis}$ and $\widehat{Z}^i_j$ in $\widehat{C}^{s,i}$ consists of $e_j$ number of nodes of $\widehat{C}^{s,i}$, corresponding to the identification of $q_{j,v} \in \widehat{C}_\mathrm{axis}$ (by interpreting the induced morphism $\widehat{C}_\mathrm{axis} \to C^\nu_\mathrm{axis}$ as a partial normalization and then abusing notation on $q_{j,v} \in \widehat{C}_\mathrm{axis}$) and $q^i_{j,v} \in \widehat{Z}^i_j$ for each $1 \le v \le e_j$.
		
		By the construction of $\widehat{C}^{s,i}$, $(C^{s,i})^\mathrm{pst}$ is isomorphic to $(\widehat{C}^{s,i})^\mathrm{st}$. Since stabilization commutes with disjoint unions, it suffices to describe the stabilization of each connected component $T$ of $\widehat{C}^{s,i}$ and show that the resulting connected curve $T^\mathrm{st}$ is independent of the choice of $i \in \{1,2\}$, i.e., there is a description of $T^\mathrm{st}$ only in terms of $\widehat{C}_\mathrm{axis}$. There are two cases to consider: either $T$ is disjoint from $\sqcup_{j=1}^u \widehat{Z}^i_j$, or $T$ contains $\sqcup_{j \in J} \widehat{Z}^i_j$ for some nonempty subset $J$ of $\{1,\dotsc,u\}$ such that for every $j \in J$, $e_j > 1$. First, assume that $T$ is disjoint from $\sqcup_{j=1}^u \widehat{Z}^i_j$. Then $T$ is a connected component of $\widehat{C}_\mathrm{axis}$ that is disjoint from $\{q_{j,v} \in \widehat{C}_\mathrm{axis} \; : \; 1 \le j \le u, \; e_j > 1, \; 1 \le v \le e_j \}$. Since $\widehat{C}_\mathrm{axis}$ is uniquely determined from $C$ by the construction, $T^\mathrm{st}$ is a connected component of $(C^{s,1})^\mathrm{pst}$ and $(C^{s,2})^\mathrm{pst}$ that is entirely constructed from $C$.
		
		The remaining case is when $T$ contains $\sqcup_{j \in J} \widehat{Z}^i_j$ for some nonempty subset $J$ of $\{1,\dotsc,u\}$, where $e_j > 1$ for every $j \in J$. Recall by construction that $g(T) \ge 1$. By construction, the image $T'$ of $T$ in $C$ is a subcurve that is contained in $\cap_{j \in J} E_j$ and contains $\{p_j \; : \; j \in J\}$. In fact, $T'$ is identified with a connected component of the normalization of $\cap_{j \in J} E_j$ at its separating nodes; so $T'$ is also a subcurve of $\widehat{C}'$. The induced map $T \to T'$ is a contraction of $\sqcup_{j \in J} \widehat{Z}^i_j$. If $g(T)=1$, then as a nodal curve without separating nodes, the dual graph $\Gamma_T$ of $T$ is a cycle with $w \ge 2$ number of vertices. By construction, $\Gamma_T \supsetneq \Gamma_{\sqcup_{j \in J} \widehat{Z}^i_j}$ where $\Gamma_{\sqcup_{j \in J} \widehat{Z}^i_j}$ is the dual graph of $\sqcup_{j \in J} \widehat{Z}^i_j$, and $e_j = 2$ for every $j \in J$. Therefore, $\sqcup_{j \in J} \widehat{Z}^i_j$ is the union of some rational bridges of $T$. So there exists an irreducible component $T_0 \cong \PP^1$ of $\overline{T \setminus \sqcup_{j \in J} \widehat{Z}^i_j}$. By construction, the image of $T_0$ in $C$ is an irreducible component of $C$. Thus, it is possible to fix a choice of $T_0$ independent of $i \in \{1,2\}$. Then, there is a stabilization $T \to T^\mathrm{st}$ such that it factors through the induced $\sqcup_{j \in J} \widehat{Z}^i_j$-contraction $T \to T'$, and the restriction of $T \to T^\mathrm{st}$ to $T_0$ is a surjection. Therefore, $T^\mathrm{st}$ is isomorphic to the irreducible semistable curve of arithmetic genus one (and geometric genus zero), which is a connected component of $(C^{s,1})^\mathrm{pst}$ and $(C^{s,2})^\mathrm{pst}$ that is entirely constructed from $T_0 \subset C$.
		
		Finally, if $g(T) > 1$, then the stabilization is unique, i.e., there is a unique stabilization morphism $\psi: T \to T^\mathrm{st}$ as a contraction of rational bridges (here, rational tails do not appear because $T$ is free from separating nodes). Notice that for every $j \in J$, $\widehat{Z}^i_j$ is a rational tree that is attached to $\overline{T \setminus \widehat{Z}^i_j}$ along $e_j$ number of distinct smooth points of $\widehat{Z}^i_j$. Thus, $\widehat{Z}^i_j$ is either a rational bridge or $3$-bridge depending on whether $e_j$ equals two or three. If $\widehat{Z}^i_j$ is a rational bridge, then it contracts to a point under $\psi$. If $\widehat{Z}^i_j$ is a rational $3$-bridge, then $\psi(\widehat{Z}^i_j) \subset T^\mathrm{st}$ is an irreducible rational $3$-bridge $(B^i_j;q^i_{j,1},q^i_{j,2},q^i_{j,3})$ that is isomorphic to $(\PP^1_K;0,1,\infty)$; here, by abuse of notation, $q^i_{j,v}$ refers to $\psi(q^i_{j,v}) \in T^\mathrm{st}$, where $q^i_{j,v} \in T$ refers to a point of $\widehat{Z}^i_j \subset \widehat{C}^{s,i}$. Let $T'$ to be the image of $T$ under the normalization map $\widehat{C}^{s,i} \to \widehat{C}'$ and let $J' \colonequals \{j \in J \mid e_j = 3\}$. Consider the nodal union $T''$ of the partial normalization $(T')^\nu_{J'}$ of $T' \subset \widehat{C}'$ along $\{p_j \in \widehat{C}' \mid j \in J'\}$ (so $\overline{T \setminus \sqcup_{j \in J'} \widehat{Z}^i_j} \cong (T')^\nu_{J'}$) and $\sqcup_{j \in J'} (B^i_j;q^i_{j,1},q^i_{j,2},q^i_{j,3})$, where each $q_{j,v} \in (T')^\nu_{J'}$ for every $j \in J'$ and $1 \le v \le 3$ (coming from $q_{j,v} \in \widehat{C}_\mathrm{axis}$ via the induced morphism $\widehat{C}_\mathrm{axis} \to \widehat{C}'$) is glued to $q^i_{j,v} \in B^i_j$. Since $T^\mathrm{st}$ is isomorphic to $(T'')^\mathrm{st}$ by construction and the isomorphism class of $(T'')^\mathrm{st}$ is uniquely determined by $C$ (not $C^{s,1}$ and/or $C^{s,2}$), $T^\mathrm{st}$ is a connected component of $(C^{s,1})^\mathrm{pst}$ and $(C^{s,2})^\mathrm{pst}$ that is entirely constructed from $C$.
		
		Combining all of the observations from the above, $(C^{s,1})^{\mathrm{pst}} \cong (C^{s,2})^{\mathrm{pst}}$, which implies that $(C^{s,1})^{\mathrm{pst}}$ and $(C^{s,2})^{\mathrm{pst}}$ are $\mathrm{C}1$-equivalent. Therefore, the desired assertion that $\overline{t}_{g,n}(C^{s,1}) \cong \overline{t}_{g,n}(C^{s,2})$ holds by Theorem~\ref{thm:Torelli_C1-equivalence}, which proves the statement of the theorem.
	\end{proof}
	
	\section*{Acknowledgements}
	We thank Donu Arapura, Dori Bejleri, Dawei Chen, Dan Edidin, Patricio Gallardo, Joe Harris, Aaron Landesman, James McKernan, David McKinnon, Akash Sengupta, and Chenyang Xu for enlightening discussions and encouragement.
	We also thank J\'anos Koll\'ar 
	for reading the draft carefully and giving insightful feedbacks.
	
	\begin{bibdiv} 
	\begin{biblist}
		
	\bib{Ale02}{article}{
		author={Alexeev, Valery},
		title={Complete moduli in the presence of semiabelian group action},
		journal={Ann. of Math. (2)},
		volume={155},
		date={2002},
		number={3},
		pages={611--708},
		issn={0003-486X},
		review={\MR{1923963}},
		doi={10.2307/3062130},
	}
		
	\bib{Ale04}{article}{
		author={Alexeev, Valery},
		title={Compactified Jacobians and Torelli map},
		journal={Publ. Res. Inst. Math. Sci.},
		volume={40},
		date={2004},
		number={4},
		pages={1241--1265},
		issn={0034-5318},
		review={\MR{2105707}},
	}

	\bib{AB12}{article}{
		author={Alexeev, Valery},
		author={Brunyate, Adrian},
		title={Extending the Torelli map to toroidal compactifications of Siegel
			space},
		journal={Invent. Math.},
		volume={188},
		date={2012},
		number={1},
		pages={175--196},
		issn={0020-9910},
		review={\MR{2897696}},
		doi={10.1007/s00222-011-0347-2},
	}
		
	\bib{AET23}{article}{
		author={Alexeev, Valery},
		author={Engel, Philip},
		author={Thompson, Alan},
		title={Stable pair compactification of moduli of K3 surfaces of degree 2},
		journal={J. Reine Angew. Math.},
		volume={799},
		date={2023},
		pages={1--56},
		issn={0075-4102},
		review={\MR{4595306}},
		doi={10.1515/crelle-2023-0011},
	}

	\bib{AFS16}{article}{
		author={Alper, Jarod},
		author={Fedorchuk, Maksym},
		author={Smyth, David Ishii},
		title={Singularities with $\Bbb{G}_m$-action and the log minimal model
			program for $\overline{\scr{M}}_g$},
		journal={J. Reine Angew. Math.},
		volume={721},
		date={2016},
		pages={1--41},
		issn={0075-4102},
		review={\MR{3574876}},
		doi={10.1515/crelle-2014-0063},
	}

	\bib{AFS17II}{article}{
		author={Alper, Jarod},
		author={Fedorchuk, Maksym},
		author={Smyth, David Ishii},
		title={Second flip in the Hassett-Keel program: existence of good moduli
			spaces},
		journal={Compos. Math.},
		volume={153},
		date={2017},
		number={8},
		pages={1584--1609},
		issn={0010-437X},
		review={\MR{3649808}},
		doi={10.1112/S0010437X16008289},
	}

	\bib{AFS17III}{article}{
		author={Alper, Jarod},
		author={Fedorchuk, Maksym},
		author={Smyth, David Ishii},
		title={Second flip in the Hassett-Keel program: projectivity},
		journal={Int. Math. Res. Not. IMRN},
		date={2017},
		number={24},
		pages={7375--7419},
		issn={1073-7928},
		review={\MR{3802125}},
		doi={10.1093/imrn/rnw216},
	}

	\bib{AFSvdW17I}{article}{
		author={Alper, Jarod},
		author={Fedorchuk, Maksym},
		author={Smyth, David Ishii},
		author={van der Wyck, Frederick},
		title={Second flip in the Hassett-Keel program: a local description},
		journal={Compos. Math.},
		volume={153},
		date={2017},
		number={8},
		pages={1547--1583},
		issn={0010-437X},
		review={\MR{3705268}},
		doi={10.1112/S0010437X16008290},
	}

	\bib{AK80I}{article}{
		author={Altman, Allen B.},
		author={Kleiman, Steven L.},
		title={Compactifying the Picard scheme},
		journal={Adv. in Math.},
		volume={35},
		date={1980},
		number={1},
		pages={50--112},
		issn={0001-8708},
		review={\MR{0555258}},
		doi={10.1016/0001-8708(80)90043-2},
	}

	\bib{AK79II}{article}{
		author={Altman, Allen B.},
		author={Kleiman, Steven L.},
		title={Compactifying the Picard scheme. II},
		journal={Amer. J. Math.},
		volume={101},
		date={1979},
		number={1},
		pages={10--41},
		issn={0002-9327},
		review={\MR{0527824}},
		doi={10.2307/2373937},
	}


	\bib{AMRT10}{book}{
		author={Ash, Avner},
		author={Mumford, David},
		author={Rapoport, Michael},
		author={Tai, Yung-Sheng},
		title={Smooth compactifications of locally symmetric varieties},
		series={Cambridge Mathematical Library},
		edition={2},
		note={With the collaboration of Peter Scholze},
		publisher={Cambridge University Press, Cambridge},
		date={2010},
		pages={x+230},
		isbn={978-0-521-73955-9},
		review={\MR{2590897}},
		doi={10.1017/CBO9780511674693},
	}

	\bib{Cap94}{article}{
		author={Caporaso, Lucia},
		title={A compactification of the universal Picard variety over the moduli
			space of stable curves},
		journal={J. Amer. Math. Soc.},
		volume={7},
		date={1994},
		number={3},
		pages={589--660},
		issn={0894-0347},
		review={\MR{1254134}},
		doi={10.2307/2152786},
	}

	\bib{Cap08}{article}{
		author={Caporaso, Lucia},
		title={N\'{e}ron models and compactified Picard schemes over the moduli
			stack of stable curves},
		journal={Amer. J. Math.},
		volume={130},
		date={2008},
		number={1},
		pages={1--47},
		issn={0002-9327},
		review={\MR{2382140}},
		doi={10.1353/ajm.2008.0000},
	}

	\bib{Cap20}{article}{
		author={Caporaso, Lucia},
		title={Compactifying moduli spaces},
		journal={Bull. Amer. Math. Soc. (N.S.)},
		volume={57},
		date={2020},
		number={3},
		pages={455--482},
		issn={0273-0979},
		review={\MR{4108092}},
		doi={10.1090/bull/1662},
	}

	\bib{CV10}{article}{
		author={Caporaso, Lucia},
		author={Viviani, Filippo},
		title={Torelli theorem for graphs and tropical curves},
		journal={Duke Math. J.},
		volume={153},
		date={2010},
		number={1},
		pages={129--171},
		issn={0012-7094},
		review={\MR{2641941}},
		doi={10.1215/00127094-2010-022},
	}

	\bib{CV11}{article}{
		author={Caporaso, Lucia},
		author={Viviani, Filippo},
		title={Torelli theorem for stable curves},
		journal={J. Eur. Math. Soc. (JEMS)},
		volume={13},
		date={2011},
		number={5},
		pages={1289--1329},
		issn={1435-9855},
		review={\MR{2825165}},
		doi={10.4171/JEMS/281},
	}

	\bib{CMJL12}{article}{
		author={Casalaina-Martin, Sebastian},
		author={Jensen, David},
		author={Laza, Radu},
		title={The geometry of the ball quotient model of the moduli space of
			genus four curves},
		conference={
			title={Compact moduli spaces and vector bundles},
		},
		book={
			series={Contemp. Math.},
			volume={564},
			publisher={Amer. Math. Soc., Providence, RI},
		},
		isbn={978-0-8218-6899-7},
		date={2012},
		pages={107--136},
		review={\MR{2895186}},
		doi={10.1090/conm/564/11153},
	}

	\bib{CMJL14}{article}{
		author={Casalaina-Martin, Sebastian},
		author={Jensen, David},
		author={Laza, Radu},
		title={Log canonical models and variation of GIT for genus 4 canonical
			curves},
		journal={J. Algebraic Geom.},
		volume={23},
		date={2014},
		number={4},
		pages={727--764},
		issn={1056-3911},
		review={\MR{3263667}},
		doi={10.1090/S1056-3911-2014-00636-6},
	}

	\bib{Con05}{webpage}{
		author={Conrad, Brian},
		title={Keel--Mori theorem via stacks},
		date={2005},
		url={https://math.stanford.edu/~conrad/papers/coarsespace.pdf},
	}

	\bib{Dav78}{article}{
		author={Davis, Edward D.},
		title={On the geometric interpretation of seminormality},
		journal={Proc. Amer. Math. Soc.},
		volume={68},
		date={1978},
		number={1},
		pages={1--5},
		issn={0002-9939},
		review={\MR{0453748}},
		doi={10.2307/2040896},
	}

	\bib{DH21}{article}{
		author={Deopurkar, Anand},
		author={Han, Changho},
		title={Stable log surfaces, admissible covers, and canonical curves of
			genus 4},
		journal={Trans. Amer. Math. Soc.},
		volume={374},
		date={2021},
		number={1},
		pages={589--641},
		issn={0002-9947},
		review={\MR{4188194}},
		doi={10.1090/tran/8225},
	}

	\bib{D'S79}{article}{
		author={D'Souza, Cyril},
		title={Compactification of generalised Jacobians},
		journal={Proc. Indian Acad. Sci. Sect. A Math. Sci.},
		volume={88},
		date={1979},
		number={5},
		pages={419--457},
		issn={0253-4142},
		review={\MR{0569548}},
	}

	\bib{Est01}{article}{
		author={Esteves, Eduardo},
		title={Compactifying the relative Jacobian over families of reduced
			curves},
		journal={Trans. Amer. Math. Soc.},
		volume={353},
		date={2001},
		number={8},
		pages={3045--3095},
		issn={0002-9947},
		review={\MR{1828599}},
		doi={10.1090/S0002-9947-01-02746-5},
	}

	\bib{Fal83}{article}{
		author={Faltings, G.},
		title={Endlichkeitss\"{a}tze f\"{u}r abelsche Variet\"{a}ten \"{u}ber
			Zahlk\"{o}rpern},
		language={German},
		journal={Invent. Math.},
		volume={73},
		date={1983},
		number={3},
		pages={349--366},
		issn={0020-9910},
		review={\MR{0718935}},
		doi={10.1007/BF01388432},
	}

	\bib{Fal84}{article}{
		author={Faltings, G.},
		title={Erratum: ``Finiteness theorems for abelian varieties over number
			fields''},
		language={German},
		journal={Invent. Math.},
		volume={75},
		date={1984},
		number={2},
		pages={381},
		issn={0020-9910},
		review={\MR{0732554}},
		doi={10.1007/BF01388572},
	}

	\bib{FMN10}{article}{
		author={Fantechi, Barbara},
		author={Mann, Etienne},
		author={Nironi, Fabio},
		title={Smooth toric Deligne-Mumford stacks},
		journal={J. Reine Angew. Math.},
		volume={648},
		date={2010},
		pages={201--244},
		issn={0075-4102},
		review={\MR{2774310}},
		doi={10.1515/CRELLE.2010.084},
	}

	\bib{Fed12}{article}{
		author={Fedorchuk, Maksym},
		title={The final log canonical model of the moduli space of stable curves
			of genus 4},
		journal={Int. Math. Res. Not. IMRN},
		date={2012},
		number={24},
		pages={5650--5672},
		issn={1073-7928},
		review={\MR{3006172}},
		doi={10.1093/imrn/rnr242},
	}

	\bib{Ful10}{article}{
		author={Fulghesu, Damiano},
		title={The stack of rational curves},
		journal={Comm. Algebra},
		volume={38},
		date={2010},
		number={7},
		pages={2405--2417},
		issn={0092-7872},
		review={\MR{2674674}},
		doi={10.1080/00927870903399794},
	}
	
	\bib{GG14}{article}{
		author={Giansiracusa, Noah},
		author={Gillam, William Danny},
		title={On Kapranov's description of $\overline M_{0,n}$ as a Chow
			quotient},
		journal={Turkish J. Math.},
		volume={38},
		date={2014},
		number={4},
		pages={625--648},
		issn={1300-0098},
		review={\MR{3195733}},
		doi={10.3906/mat-1306-17},
	}


	\bib{Gro63SGAI}{collection}{
		author={Grothendieck, Alexander},
		title={Rev\^{e}tements \'{e}tales et groupe fondamental. (SGA I)},
		note={Troisi\`eme \'{e}dition, corrig\'{e}e;
			S\'{e}minaire de G\'{e}om\'{e}trie Alg\'{e}brique, 1960/61},
		publisher={Institut des Hautes \'{E}tudes Scientifiques, Paris},
		date={1963},
	}

	\bib{HR17}{article}{
		author={Hall, Jack},
		author={Rydh, David},
		title={The telescope conjecture for algebraic stacks},
		journal={J. Topol.},
		volume={10},
		date={2017},
		number={3},
		pages={776--794},
		issn={1753-8416},
		review={\MR{3797596}},
		doi={10.1112/topo.12021},
	}

	\bib{HM98}{book}{
		author={Harris, Joe},
		author={Morrison, Ian},
		title={Moduli of curves},
		series={Graduate Texts in Mathematics},
		volume={187},
		publisher={Springer-Verlag, New York},
		date={1998},
		pages={xiv+366},
		isbn={0-387-98438-0},
		isbn={0-387-98429-1},
		review={\MR{1631825}},
	}

	\bib{Har77}{book}{
		author={Hartshorne, Robin},
		title={Algebraic geometry},
		series={Graduate Texts in Mathematics},
		volume={No. 52},
		publisher={Springer-Verlag, New York-Heidelberg},
		date={1977},
		pages={xvi+496},
		isbn={0-387-90244-9},
		review={\MR{0463157}},
	}

	\bib{Har10}{book}{
		author={Hartshorne, Robin},
		title={Deformation theory},
		series={Graduate Texts in Mathematics},
		volume={257},
		publisher={Springer, New York},
		date={2010},
		pages={viii+234},
		isbn={978-1-4419-1595-5},
		review={\MR{2583634}},
		doi={10.1007/978-1-4419-1596-2},
	}

	\bib{Has05}{article}{
		author={Hassett, Brendan},
		title={Classical and minimal models of the moduli space of curves of
			genus two},
		conference={
			title={Geometric methods in algebra and number theory},
		},
		book={
			series={Progr. Math.},
			volume={235},
			publisher={Birkh\"{a}user Boston, Boston, MA},
		},
		isbn={0-8176-4349-4},
		date={2005},
		pages={169--192},
		review={\MR{2166084}},
		doi={10.1007/0-8176-4417-2\_8},
	}

	\bib{HH09}{article}{
		author={Hassett, Brendan},
		author={Hyeon, Donghoon},
		title={Log canonical models for the moduli space of curves: the first
			divisorial contraction},
		journal={Trans. Amer. Math. Soc.},
		volume={361},
		date={2009},
		number={8},
		pages={4471--4489},
		issn={0002-9947},
		review={\MR{2500894}},
		doi={10.1090/S0002-9947-09-04819-3},
	}

	\bib{HH13}{article}{
		author={Hassett, Brendan},
		author={Hyeon, Donghoon},
		title={Log minimal model program for the moduli space of stable curves:
			the first flip},
		journal={Ann. of Math. (2)},
		volume={177},
		date={2013},
		number={3},
		pages={911--968},
		issn={0003-486X},
		review={\MR{3034291}},
		doi={10.4007/annals.2013.177.3.3},
	}

	\bib{HL10}{article}{
		author={Hyeon, Donghoon},
		author={Lee, Yongnam},
		title={Log minimal model program for the moduli space of stable curves of
			genus three},
		journal={Math. Res. Lett.},
		volume={17},
		date={2010},
		number={4},
		pages={625--636},
		issn={1073-2780},
		review={\MR{2661168}},
		doi={10.4310/MRL.2010.v17.n4.a4},
	}

	\bib{HL14}{article}{
		author={Hyeon, Donghoon},
		author={Lee, Yongnam},
		title={A birational contraction of genus 2 tails in the moduli space of
			genus 4 curves I},
		journal={Int. Math. Res. Not. IMRN},
		date={2014},
		number={13},
		pages={3735--3757},
		issn={1073-7928},
		review={\MR{3229767}},
		doi={10.1093/imrn/rnt042},
	}

	\bib{Igu56}{article}{
		author={Igusa, Jun-ichi},
		title={Fibre systems of Jacobian varieties},
		journal={Amer. J. Math.},
		volume={78},
		date={1956},
		pages={171--199},
		issn={0002-9327},
		review={\MR{0084848}},
		doi={10.2307/2372489},
	}

	\bib{Ish78}{article}{
		author={Ishida, Masa-Nori},
		title={Compactifications of a family of generalized Jacobian varieties},
		conference={
			title={Proceedings of the International Symposium on Algebraic
				Geometry},
			address={Kyoto Univ., Kyoto},
			date={1977},
		},
		book={
			publisher={Kinokuniya Book Store, Tokyo},
		},
		date={1978},
		pages={503--524},
		review={\MR{0578869}},
	}

	\bib{KP19}{article}{
		author={Kass, Jesse Leo},
		author={Pagani, Nicola},
		title={The stability space of compactified universal Jacobians},
		journal={Trans. Amer. Math. Soc.},
		volume={372},
		date={2019},
		number={7},
		pages={4851--4887},
		issn={0002-9947},
		review={\MR{4009442}},
		doi={10.1090/tran/7724},
	}

	\bib{Kee92}{article}{
		author={Keel, Sean},
		title={Intersection theory of moduli space of stable $n$-pointed curves
			of genus zero},
		journal={Trans. Amer. Math. Soc.},
		volume={330},
		date={1992},
		number={2},
		pages={545--574},
		issn={0002-9947},
		review={\MR{1034665}},
		doi={10.2307/2153922},
	}


	
	\bib{Kol93}{article}{
		author={Koll\'{a}r, J\'{a}nos},
		title={Shafarevich maps and plurigenera of algebraic varieties},
		journal={Invent. Math.},
		volume={113},
		date={1993},
		number={1},
		pages={177--215},
		issn={0020-9910},
		review={\MR{1223229}},
		doi={10.1007/BF01244307},
	}

	\bib{Kol00}{article}{
		author={Koll\'{a}r, J\'{a}nos},
		title={Fundamental groups of rationally connected varieties},
		note={Dedicated to William Fulton on the occasion of his 60th birthday},
		journal={Michigan Math. J.},
		volume={48},
		date={2000},
		pages={359--368},
		issn={0026-2285},
		review={\MR{1786496}},
		doi={10.1307/mmj/1030132724},
	}

	\bib{Kol23}{book}{
		author={Koll\'ar, J\'anos},
		title={Families of varieties of general type},
		series={Cambridge Tracts in Mathematics},
		volume={231},
		note={With the collaboration of Klaus Altmann and S\'andor J. Kov\'acs},
		publisher={Cambridge University Press, Cambridge},
		date={2023},
		pages={xviii+471},
		isbn={978-1-009-34610-8},
		review={\MR{4566297}},
	}
	
	\bib{KM98}{book}{
		author={Koll\'{a}r, J\'{a}nos},
		author={Mori, Shigefumi},
		title={Birational geometry of algebraic varieties},
		series={Cambridge Tracts in Mathematics},
		volume={134},
		note={With the collaboration of C. H. Clemens and A. Corti;
			Translated from the 1998 Japanese original},
		publisher={Cambridge University Press, Cambridge},
		date={1998},
		pages={viii+254},
		isbn={0-521-63277-3},
		review={\MR{1658959}},
		doi={10.1017/CBO9780511662560},
	}


	\bib{Kon20}{book}{
		author={Kond\={o}, Shigeyuki},
		title={$K3$ surfaces},
		series={EMS Tracts in Mathematics},
		volume={32},
		note={Translated from the Japanese original by the author},
		publisher={EMS Publishing House, Berlin},
		date={2020},
		pages={xiii+236},
		isbn={978-3-03719-208-5},
		review={\MR{4321993}},
	}

	\bib{LMB00}{book}{
		author={Laumon, G\'{e}rard},
		author={Moret-Bailly, Laurent},
		title={Champs alg\'{e}briques},
		language={French},
		series={Ergebnisse der Mathematik und ihrer Grenzgebiete. 3. Folge. A
			Series of Modern Surveys in Mathematics [Results in Mathematics and
			Related Areas. 3rd Series. A Series of Modern Surveys in Mathematics]},
		volume={39},
		publisher={Springer-Verlag, Berlin},
		date={2000},
		pages={xii+208},
		isbn={3-540-65761-4},
		review={\MR{1771927}},
	}

	\bib{Mel09}{article}{
		author={Melo, Margarida},
		title={Compactified Picard stacks over $\overline{\scr M}_g$},
		journal={Math. Z.},
		volume={263},
		date={2009},
		number={4},
		pages={939--957},
		issn={0025-5874},
		review={\MR{2551606}},
		doi={10.1007/s00209-008-0447-x},
	}

	\bib{Mel11}{article}{
		author={Melo, Margarida},
		title={Compactified Picard stacks over the moduli stack of stable curves
			with marked points},
		journal={Adv. Math.},
		volume={226},
		date={2011},
		number={1},
		pages={727--763},
		issn={0001-8708},
		review={\MR{2735773}},
		doi={10.1016/j.aim.2010.07.012},
	}

	\bib{Mel19}{article}{
		author={Melo, Margarida},
		title={Universal compactified Jacobians},
		journal={Port. Math.},
		volume={76},
		date={2019},
		number={2},
		pages={101--122},
		issn={0032-5155},
		review={\MR{4065093}},
		doi={10.4171/pm/2028},
	}

	\bib{MO15}{book}{
		author={Mumford, David},
		author={Oda, Tadao},
		title={Algebraic geometry. II},
		series={Texts and Readings in Mathematics},
		volume={73},
		publisher={Hindustan Book Agency, New Delhi},
		date={2015},
		pages={x+504},
		isbn={978-93-80250-80-9},
		review={\MR{3443857}},
	}

	\bib{Nam76II}{article}{
		author={Namikawa, Yukihiko},
		title={A new compactification of the Siegel space and degeneration of
			Abelian varieties. II},
		journal={Math. Ann.},
		volume={221},
		date={1976},
		number={3},
		pages={201--241},
		issn={0025-5831},
		review={\MR{0480538}},
		doi={10.1007/BF01596390},
	}

	\bib{OS79}{article}{
		author={Oda, Tadao},
		author={Seshadri, C. S.},
		title={Compactifications of the generalized Jacobian variety},
		journal={Trans. Amer. Math. Soc.},
		volume={253},
		date={1979},
		pages={1--90},
		issn={0002-9947},
		review={\MR{0536936}},
		doi={10.2307/1998186},
	}

	\bib{Ols16}{book}{
		author={Olsson, Martin},
		title={Algebraic spaces and stacks},
		series={American Mathematical Society Colloquium Publications},
		volume={62},
		publisher={American Mathematical Society, Providence, RI},
		date={2016},
		pages={xi+298},
		isbn={978-1-4704-2798-6},
		review={\MR{3495343}},
		doi={10.1090/coll/062},
	}

	\bib{Pan96}{article}{
		author={Pandharipande, Rahul},
		title={A compactification over $\overline {M}_g$ of the universal moduli
			space of slope-semistable vector bundles},
		journal={J. Amer. Math. Soc.},
		volume={9},
		date={1996},
		number={2},
		pages={425--471},
		issn={0894-0347},
		review={\MR{1308406}},
		doi={10.1090/S0894-0347-96-00173-7},
	}
	
	\bib{PR24}{arXiv}{
		author={Polishchuk, Alexander},
		author={Rains, Eric},
		title={Algebra of global sections of $\psi$-bundles on $\bar{M}_{0,n}$},
		date={2024},
		eprint={2405.21062},
		archiveprefix={arXiv},
		primaryclass={math.AG},
	}

	\bib{Sch77}{article}{
		author={Schaps, Mary},
		title={Deformations of Cohen-Macaulay schemes of codimension $2$ and
			non-singular deformations of space curves},
		journal={Amer. J. Math.},
		volume={99},
		date={1977},
		number={4},
		pages={669--685},
		issn={0002-9327},
		review={\MR{0491715}},
		doi={10.2307/2373859},
	}

	\bib{Sch91}{article}{
		author={Schubert, David},
		title={A new compactification of the moduli space of curves},
		journal={Compositio Math.},
		volume={78},
		date={1991},
		number={3},
		pages={297--313},
		issn={0010-437X},
		review={\MR{1106299}},
	}

	\bib{Sim94I}{article}{
		author={Simpson, Carlos T.},
		title={Moduli of representations of the fundamental group of a smooth
			projective variety. I},
		journal={Inst. Hautes \'{E}tudes Sci. Publ. Math.},
		number={79},
		date={1994},
		pages={47--129},
		issn={0073-8301},
		review={\MR{1307297}},
	}
	
	\bib{Smy11b}{article}{
		author={Smyth, David Ishii},
		title={Modular compactifications of the space of pointed elliptic curves
			II},
		journal={Compos. Math.},
		volume={147},
		date={2011},
		number={6},
		pages={1843--1884},
		issn={0010-437X},
		review={\MR{2862065}},
		doi={10.1112/S0010437X11005549},
	}

	\bib{Smy13}{article}{
		author={Smyth, David Ishii},
		title={Towards a classification of modular compactifications of
			$\scr{M}_{g,n}$},
		journal={Invent. Math.},
		volume={192},
		date={2013},
		number={2},
		pages={459--503},
		issn={0020-9910},
		review={\MR{3044128}},
		doi={10.1007/s00222-012-0416-1},
	}

	\bib{Stacks}{webpage}{
		author={Authors, The Stacks Project},
		title={Stacks Project},
		date={2024},
		url={https://stacks.math.columbia.edu/},
		label={Stacks}, 
	}

	\bib{Sti17}{arXiv}{
		author={Stibitz, Charlie},
		title={\'Etale Covers and Local Algebraic Fundamental Groups},
		date={2017},
		eprint={1707.08611},
		archiveprefix={arXiv},
		primaryclass={math.AG},
	}

	\bib{Tak03}{article}{
		author={Takayama, Shigeharu},
		title={Local simple connectedness of resolutions of log-terminal
			singularities},
		journal={Internat. J. Math.},
		volume={14},
		date={2003},
		number={8},
		pages={825--836},
		issn={0129-167X},
		review={\MR{2013147}},
		doi={10.1142/S0129167X0300196X},
	}

	\bib{Vak06}{article}{
		author={Vakil, Ravi},
		title={Murphy's law in algebraic geometry: badly-behaved deformation
			spaces},
		journal={Invent. Math.},
		volume={164},
		date={2006},
		number={3},
		pages={569--590},
		issn={0020-9910},
		review={\MR{2227692}},
		doi={10.1007/s00222-005-0481-9},
	}

	\bib{Ver13}{article}{
		author={Verbitsky, Misha},
		title={Mapping class group and a global Torelli theorem for
			hyperk\"{a}hler manifolds},
		note={Appendix A by Eyal Markman},
		journal={Duke Math. J.},
		volume={162},
		date={2013},
		number={15},
		pages={2929--2986},
		issn={0012-7094},
		review={\MR{3161308}},
		doi={10.1215/00127094-2382680},
	}

	\bib{Ver20}{article}{
		author={Verbitsky, Misha},
		title={Errata for ``Mapping class group and a global Torelli theorem for
			hyperk\"{a}hler manifolds'' by Misha Verbitsky},
		journal={Duke Math. J.},
		volume={169},
		date={2020},
		number={5},
		pages={1037--1038},
		issn={0012-7094},
		review={\MR{4079420}},
		doi={10.1215/00127094-2020-0016},
	}

	\bib{Wei48}{book}{
		author={Weil, Andr\'{e}},
		title={Sur les courbes alg\'{e}briques et les vari\'{e}t\'{e}s qui s'en
			d\'{e}duisent},
		language={French},
		series={Publications de l'Institut de Math\'{e}matiques de
			l'Universit\'{e} de Strasbourg [Publications of the Mathematical
			Institute of the University of Strasbourg]},
		volume={7 (1945)},
		note={Actualit\'{e}s Scientifiques et Industrielles, No. 1041. [Current
			Scientific and Industrial Topics]},
		publisher={Hermann \& Cie, Paris},
		date={1948},
		pages={iv+85},
		review={\MR{0027151}},
	}
		
	\end{biblist} 
	\end{bibdiv}

\end{document}